\let\d=\partial
\let\eps=\varepsilon
\let\wt=\widetilde
\let\wh=\widehat
\def\cC{{\mathcal C}}
\def\cD{{\mathcal D}}
\def\cE{{\mathcal E}}
\def\cO{{\mathcal O}}
\def\cP{{\mathcal P}}
\def\cQ{{\mathcal Q}}
\def\N{{\mathbb N}}
\def\R{{\mathbb R}}
\def\T{{\mathbb T}}
\def\Z{{\mathbb Z}}
\newcommand{\e}{\mathrm{e}}
\newcommand{\with}{\quad\hbox{with}\quad}
\newcommand{\andf}{\quad\hbox{and}\quad}
\def\virgp{\raise 2pt\hbox{,}}
\def\cdotpv{\raise 2pt\hbox{;}}
\def\div{ \hbox{\rm div}\,  }
\def\curl{ \hbox{\rm curl}\,  }
\newcommand{\Int}{\displaystyle \int}
\newcommand{\Frac}{\displaystyle \frac}
\newcommand{\ds}{\displaystyle}
\def\dr{\delta\!\rho}
\def\dv{\delta\!v}
\newtheorem{thm}{Theorem}[section]
\newtheorem{lem}{Lemma}[section]
\newtheorem{rmk}{Remark}[section]
\newtheorem{col}{Corollary}[section]
\newtheorem{prop}{Proposition}[section]
\newcommand{\ben}{\begin{eqnarray}}
\newcommand{\een}{\end{eqnarray}}
\newcommand{\beno}{\begin{eqnarray*}}
\newcommand{\eeno}{\end{eqnarray*}}
\numberwithin{equation}{section}
\begin{document}
\title[]{Compressible Navier-Stokes equations with ripped density}
\author[R. Danchin]{Rapha\"{e}l Danchin}
\address[R. Danchin]{Universit\'{e} Paris-Est,  LAMA (UMR 8050), UPEMLV, UPEC, CNRS,
 61 avenue du G\'{e}n\'{e}ral de Gaulle, 94010 Cr\'{e}teil Cedex, France.} \email{raphael.danchin@u-pec.fr}
\author[P.B. Mucha]{Piotr Bogus\l aw Mucha}
\address[P.B. Mucha]{Instytut Matematyki Stosowanej i Mechaniki, Uniwersytet Warszawski, 
ul. Banacha 2,  02-097 Warszawa, Poland.} 
\email{p.mucha@mimuw.edu.pl}

\begin{abstract} 

%{\color{blue}
We are concerned with  the Cauchy problem for 
the two-dimensional compressible Navier-Stokes equations 
supplemented with general  $H^1$ initial velocity and  bounded initial 
density  not necessarily strictly positive: it may be the characteristic function of any set, for instance. 

In the  perfect gas case, we establish global-in-time existence and uniqueness,
 provided the volume (bulk) viscosity coefficient  is large enough.
For more general pressure laws (like e.g. $P=\rho^\gamma$ with $\gamma>1$),
we still get global existence, but uniqueness remains  an open question.
 
   As a by-product of our results, we give a rigorous justification of the convergence to the inhomogeneous 
  incompressible Navier-Stokes equations when the bulk viscosity tends to infinity.
  
    In the three-dimensional case, similar results are proved for short time  without
   restriction on the  viscosity, and for large time if  the initial velocity field
   is small enough.
 \end{abstract}
\maketitle

\section*{Introduction}

 Systems of PDEs coming from classical physics are sources of  never-ending challenges for mathematicians.
%thanks to their rigidness in the statement they are treated in our analysis as a kind of a set of irremovable axioms. 
This is the case of  Euler and Navier-Stokes  systems that are at
the basis of  fluid mechanics. 
In that field, a number of new and sometimes unexpected results flourished in the last decade. 
One can for instance mention the works by C. De Lellis  and L. Sz\'ekelyhidi \cite{DLSz1,DLSz2}
where a technique based on convex integration is used  to  construct  
infinitely many finite energy solutions to the classical incompressible Euler equations. 
Since their energy may be any nonnegative smooth function of
the time variable, those solutions are not physically relevant. For that reason, they are often named  wild
 (or even spam) solutions. Convex integration turned out to be robust enough 
 so as to  be adapted to other PDEs for inviscid flows (e.g. the compressible Euler system  \cite{CHK}) and even  to  models with diffusion like  the classical Navier-Stokes system \cite{BV}. 
% For all those systems however,  the produced solutions  violate the expected energy inequality. 
\smallbreak
In accordance with Laplace determinism principle
(or, in mathematics, with Hada\-mard's definition of well-posedness), 
it is natural to look for conditions on the initial data
ensuring uniqueness, global existence and stability by perturbations. However, even for rather simple physical systems,  the full answer is not often known.
  %just for simple and reduced case,  like mono-dimensional \cite{Kaz}. 
  In this regards, one may mention the celebrated Millenium Problem 
  dedicated to the global  regularity of solutions to the incompressible Navier-Stokes equations in the three dimensional 
case\footnote{See {\tt http://www.claymath.org/millennium-problems}.}.
 So far,   there is no consensus in the community  on whether  the solutions are unique or not, 
regular  or not, for all time. 
  Positive answer is known  for the two space dimensional case, 
  after the work by  O.A. Ladyzhenskaya \cite{Lad} in 1958, 
 that states that weak solutions to the Navier-Stokes equation 
 are unique, stable, and  smooth if the  data are smooth.
 Up  to some small variations (like e.g. viscous flows with  variable density  or coupling 
 with a transport equation through a buoyancy term),  that case is essentially the only 
 one in  classical mathematical fluid mechanics where a complete well-posedness theory 
is available.
\medbreak
In the present paper, we would like to address the global well-posedness issue 
for  the compressible Navier-Stokes system  in the barotropic regime, 
supplemented with  
%we aim at proving the global in time unique solutions to 
general arbitrary large initial data 
with merely bounded (and nonnegative)  density : we have in mind  a ``ripped'' initial density, that
  is a function  that  may have  nontrivial  regions of vacuum, without any extra  regularity assumption.
\smallbreak
The main achievements here  are as follows:
\begin{itemize}
\item  in the two-dimensional case,
for any nonnegative initial density,  just bounded, and  any 
initial  velocity $v_0$ in  $H^1,$  if  the bulk  viscosity $\nu$ is large enough and $\|\div v_0\|_{L^2}=\cO(\nu^{-1/2}),$ 
then  there exists at least one global solution with uniformly bounded density;
% and velocity field with  divergence and vorticity in $L_{r,loc}(\R_+;L_\infty)$ for some $r>1$;

\smallskip 

\item  in the case of a  perfect gas, namely if the pressure is positively proportional to  the density, then
 the above solutions are unique (here  the bulk viscosity need not be large and $\div v_0$ need not be small);
 
\smallskip  
 
%$\clubsuit$ -- as a consequence we obtain local in time well-posedness for the system for arbitrary bulk density in the case of the perfect gas;
\item  we justify  rigorously   the (singular) limit of the compressible Navier-Stokes system to the inhomogeneous incompressible one  as $\nu$
tends to infinity (regardless of the fact that  the density may vanish and have large variations);

\smallskip 

%$\clubsuit$ -- in the case of regular initial density we obtain uniqueness for arbitrary monotone pressure function. [it is obvious??] It is a consequence of new simpler proof of the existence of the 
%constructed solutions;

\item  the above  results remain true  in the  three-dimensional case, provided a suitable smallness condition is prescribed on  the whole initial velocity $v_0.$
\end{itemize}

As our goal is  to  consider as
general densities as possible, we do not strive for 
optimal regularity hypotheses  on the initial velocity, and take it in $H^1,$ for simplicity. 
Although the density of the solution is only a bounded function, 
the corresponding velocity has relatively high regularity. However, we do not reach  the $L_{1,loc}(\R_+;C^{0,1})$ regularity so that the classical methods for showing uniqueness fail.
%, like $L_{r,loc}(\R_+;BMO)$ for some $r>1.$ However it  need not be in $L_{1,loc}(\R_+;C^{0,1}).$
Nevertheless, we establish uniqueness  in the regime of a perfect gas, 
 and  obtain some qualitative results  on  the regions of vacuum: 
their growth  or decrease is controlled in terms of the data and of the time, 
they are  stable and vacuum cannot  appear if the initial density is positive
(or cannot  disappear if the initial density vanishes on some set with positive measure). 
In the case where the initial density is the characteristic function of a  set,  
our results provide us with some information on the regularity of the boundary 
of the support of the density for positive times, even though the flow
is not quite Lipschitz.  

In addition,  our solutions are physical: total mass and momentum are conserved, and 
 the energy balance is fulfilled for all time. As a consequence, in the case 
of  zero energy initial data (which does not mean that the initial velocity is zero 
since it may be anything in the regions of  vacuum), the only possible solution has null velocity
instantaneously.

%As a corollary of our results we distinguish a result concerning the density patches problem. Let initial density be a characteristic function of a set $A$ with smooth boundary
%then the support of the density stay in the set $A(t)$ and the regularity of 
%$\partial A(t)$ stays in $C^\alpha$ with any $\alpha <1$. It is a consequence of the fact that the velocity field is not $C^1$ in space, the merely information we have is the divergence of the flow is 
%ust bounded, and in the case of rough density must not be improved.

%%%%%%%%%%%%%%%%%%%%%%%%%%%%%%%%%%%%%%%%%%%

\section{The results}

We are concerned with the following barotropic compressible 
Navier-Stokes equations  in the unit torus $\T^d$ with  $d=2,3$:
\begin{equation}\label{CNS}
\left\{ \begin{array}{lcr}
  \rho_t + \div (\rho v) =0 & \mbox{ in } & \R_+\times\T^d,\\[1ex]
  (\rho v)_t + \div(\rho v\otimes v) -\mu\Delta v - (\lambda+\mu) \nabla \div v + \nabla P =0 & \mbox{ in } & \R_+ \times \T^d.
 \end{array}\right.
\end{equation}
The pressure  $P$ is a given function of the density. The real numbers $\lambda$ and $\mu$ designate 
the bulk and shear viscosity coefficients, respectively,  and are assumed to satisfy
\begin{equation}\label{eq:posvisc}
\mu>0\quad\hbox{and}\quad \nu:=\lambda+2\mu>0.
\end{equation}
The system is supplemented with the initial data
\begin{equation}
 v|_{t=0} = v_0, \qquad \rho|_{t=0} = \rho_0.
\end{equation}
It is obvious that the 
 total mass and momentum of smooth enough 
 solutions of \eqref{CNS}  are conserved through the evolution, namely, for all $t\geq0,$
 \begin{equation}\label{eq:conservation}
 \int_{\T^d}\rho(t,x)\,dx=\int_{\T^d}\rho_0(x)\quad\hbox{and}\quad
 \int_{\T^d}(\rho v)(t,x)\,dx=\int_{\T^d}(\rho_0v_0)(x)\,dx.
 \end{equation}
 For expository purposes, we shall  always assume  that\footnote{This is   not restrictive,  as  one can rescale the density function and use  the Galilean invariance of the system to have those two conditions satisfied.} 
 \begin{equation}\label{eq:normalization}
  \int_{\T^d}\rho_0(x)\,dx=1\andf\int_{\T^d}(\rho_0v_0)(x)\,dx=0.
  \end{equation}
Next,  if we denote by $e$ the potential energy of the fluid defined, up to an affine function,  by 
the relation $\rho e''=P',$  and introduce the total energy   
$$E(t):= \int_{\T^d}\Bigl(\frac12 \rho(t,x)|v(t,x)|^2+e(\rho(t,x))\Bigr)dx,$$
then    (still for smooth enough solutions)  the following energy balance holds true:
 \begin{equation}\label{eq:energy}
 E(t) +\int_0^t \bigl(\mu\|\nabla\cP v(\tau)\|^2_{2} + \nu \|\div v(\tau)\|_{2}^2\bigr)d\tau =
   E_0:=E(0),\end{equation}
 where $\cP$ denotes the $L_2$-projector onto the set of  solenoidal vector-fields and
 $\|\cdot\|_p,$ the norm  in $L_p(\T^d).$ 
\medbreak
Since the pioneering works by P.-L. Lions in \cite{Lions} and E. Feireisl in \cite{feireisl}
(see also the  paper by D. Bresch and P.-E. Jabin \cite{BJ}
that uses recent achievements of the transport theory), it is well understood 
that in the case of an isentropic pressure law $P(\rho)=\rho^\gamma$ with $\gamma>d/2,$ 
any finite energy initial data  generates a global-in-time weak solution to \eqref{CNS} satisfying
 \begin{equation}\label{eq:IE}
 E(t) +\int_0^t \bigl(\mu\|\nabla\cP v(\tau)\|^2_{2} + \nu \|\div v(\tau)\|_{2}^2\bigr)d\tau\leq E_0\quad\hbox{for all} \quad t\geq0.\end{equation}
However, even in the two-dimensional case, it is not clear that those 
solutions respect the energy balance \eqref{eq:energy} (just inequality
is known), and the regularity and uniqueness issues  are widely open.
{}From the viewpoint of the well-posedness theory, those weak solutions 
are relevant  inasmuch as they  satisfy the so-called  
\emph{weak-strong uniqueness}  principle :  for smooth data, they coincide with the 
 corresponding smooth solution as long as  it exists (see  \cite{Fei2019,Ger}).
%  in sharp contrast with the theory for incompressible homogeneous flows (see \cite{Lad}).   
\smallbreak
Regarding the well-posedness issue, there is a number of results 
in the case of smooth  density \emph{bounded away from zero}
(some of them like \cite{S} being obtained much before the construction of weak solutions). 
The general rule is that  the solutions are known to exist for small time  if the data are large (see e.g. \cite{D01,Itaya,Mu03,S}) and for all time  if    the data are small perturbations of a linearly stable  constant state  (see \cite{D00,MaNi, Mu01}). It has been observed by Y. Cho, H.J. Choe and H. Kim \cite{CCK}
that positivity of density may be somewhat relaxed if 
a suitable compatibility condition involving the initial velocity
and high regularity of the density are guaranteed. 
Let us finally mention that for viscosity coefficients  that depend on the density
in a very specific  way, one can achieve global existence of strong solutions
in dimension two,  even for large data, if $\gamma>3$ (see \cite{Veig}). 
\smallbreak
At the end let us  mention the work by D. Hoff in \cite{Hoff1} devoted to the  construction of
 ``intermediate'' solutions in between the aforementioned weak solutions  and the more regular ones, 
   that may have discontinuous density along some curve ($d=2$) or surface ($d=3$). 
\medbreak
%Our first aim here  is to adapt our work in \cite{DM-1} dedicated to the \emph{incompressible}
%inhomogeneous Navier-Stokes equations, to the compressible Navier-Stokes equations:
We here want to provide the reader with a complete global-in-time existence theory in the case where the initial 
velocity is in $H^1(\T^d)$ and the  initial density is just bounded. 
In the two dimensional case, we shall achieve our goal provided 
 that $\nu^{1/2}\|\div v_0\|_2\leq K$ for some given $K>0$ and that $\nu$ is large enough
 (the assumption on $\div v_0$ comes up naturally 
 when defining a suitable energy functional that controls the $H^1$ regularity). 
 A remarkable feature of our result  is that, even though the density is rough and need not be positive,  one can exhibit some parabolic gain of regularity for the velocity, which  entails that both $\div v$ and $\curl v$ are almost  in $L_{2,loc}(\R_+;L_\infty).$  Although this does not  quite imply that the full gradient of $v$ is  in $L_{1,loc}(\R_+;L_\infty),$
we will  get uniqueness in the case where $P(\rho)=\rho.$
 %despite the fact that System \eqref{CNS}  is quasilinear and partially hyperbolic. 
\medbreak
Let us first state our global existence result in the two-dimensional case.
\begin{thm}\label{thm:global2}  
Assume that the pressure law is $P(\rho)=a\rho^\gamma$ for some $a>0$ and $\gamma\geq1.$
Fix some $K>0,$ and consider any vector field $v_0$ in $H^1(\T^2)$ satisfying $\|\div v_0\|_2\leq K\nu^{-1/2}$
and nonnegative bounded function $\rho_0$ fulfilling  \eqref{eq:normalization}.

There exists  a positive number $\nu_0$ depending only on $K,$ $\gamma,$ $\mu,$ $E_0,$ $\|\nabla v_0\|_2$ and  $\|\rho_0\|_\infty$
such that if $\nu\geq\nu_0$ then System \eqref{CNS} admits a global-in-time solution 
$(\rho,v)$ fulfilling the conservations laws \eqref{eq:conservation}, the 
energy balance \eqref{eq:energy}, 
$$\rho\in L_\infty(\R_+\times\T^2)\cap \cC(\R_+;L_p(\T^2)),\quad p<\infty
\andf\sqrt \rho\, v\in\cC(\R_+;L_2(\T^2)).$$
In addition, we have, denoting $\dot v:=v_t+v\cdot\nabla v$ and $G:=\nu\div v-P,$  
\begin{equation}\label{eq:regT2}
\begin{array}{c}v\in L_\infty(\R_+;H^1(\T^2)),\quad(\nabla^2\cP v,\nabla G,\sqrt\rho\,\dot v)\in L_2(\R_+\times\T^2),\\[1ex]
\sqrt{\rho t}\,\dot v\in L_{\infty,loc}(\R_+;L_2(\T^2)),\quad
\sqrt t\,\nabla\dot v\in L_{2,loc}(\R_+;L_2(\T^2)),\end{array}
\end{equation}
and  both $\div v$ and $\curl v$ are in $L_{r,loc}(\R_+;L_\infty(\T^2))$ for all $r<2$.
\end{thm}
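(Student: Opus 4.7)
The proof proceeds by approximation together with uniform a priori estimates \emph{à la} D.~Hoff, built around the effective viscous flux $G:=\nu\div v-P$ and the vorticity $\omega:=\curl v$. I would first regularize the data into smooth $(\rho_0^n,v_0^n)$, with $\rho_0^n$ bounded below by $\eta_n\downarrow 0$ and above by $\|\rho_0\|_\infty$, $v_0^n\to v_0$ in $H^1$, and $\|\div v_0^n\|_2\le K\nu^{-1/2}$. Classical theory (e.g.~\cite{CCK}) produces global smooth solutions $(\rho^n,v^n)$, and the whole game is to derive $n$-uniform bounds and pass to the limit.

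Testing the momentum equation against $v^n$ yields the energy balance \eqref{eq:energy}, hence $\sqrt{\rho^n}\,v^n\in L_\infty(L_2)$ and $\nabla\cP v^n,\div v^n\in L_2(L_2)$; the transport equation for $\rho^n$ then gives $\rho^n\in L_\infty(\R_+\times\T^2)$ uniformly. For the second tier, I would rewrite the momentum equation as $\rho\dot v=\mu\Delta\cP v+\nabla G$, whence the elliptic identities
\begin{equation*}
\Delta G=\div(\rho\dot v),\qquad \mu\Delta\omega=\curl(\rho\dot v),
\end{equation*}
yielding $\|\nabla G\|_2+\mu\|\nabla\omega\|_2\lesssim\|\rho\|_\infty^{1/2}\|\sqrt\rho\,\dot v\|_2$ and, through $\div v=\nu^{-1}(G+P)$, the second-order bound
\begin{equation*}
\|\nabla^2 v\|_2\lesssim\mu^{-1}\|\rho\|_\infty^{1/2}\|\sqrt\rho\,\dot v\|_2+\nu^{-1}\|\nabla P\|_2+\|\nabla v\|_2.
\end{equation*}
Testing the momentum equation against $\dot v$ and reorganising the pressure term through the continuity equation for $P(\rho)$ produces, schematically,
\begin{equation*}
\frac{d}{dt}\Bigl(\mu\|\omega\|_2^2+\nu\|\div v\|_2^2-2\!\int\!P\div v\,dx\Bigr)+\|\sqrt\rho\,\dot v\|_2^2\lesssim\|\nabla v\|_3^3+\text{l.o.t.}
\end{equation*}
In dimension two the Gagliardo--Nirenberg inequality $\|\nabla v\|_3^3\lesssim\|\nabla v\|_2^2\|\nabla^2 v\|_2$, combined with the elliptic estimate above, splits the cubic term into pieces that can be absorbed by $\tfrac12\|\sqrt\rho\,\dot v\|_2^2$ modulo Gronwall-type remainders of the form $\mu^{-2}\|\rho\|_\infty\|\nabla v\|_2^4$. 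The hypothesis $\nu^{1/2}\|\div v_0\|_2\le K$ ensures that the initial Hoff energy $\mu\|\omega_0\|_2^2+\nu\|\div v_0\|_2^2$ is bounded independently of $\nu$; the basic energy gives $\int_0^\infty\|\nabla v\|_2^2\,dt\le E_0/\min(\mu,\nu)$, which for $\nu$ large suffices, via a bootstrap, to keep the Hoff energy globally bounded by (at most) twice its initial value. This is the mechanism producing the threshold $\nu_0=\nu_0(K,\gamma,\mu,E_0,\|\nabla v_0\|_2,\|\rho_0\|_\infty)$. As a byproduct one obtains a global uniform bound on the Hoff energy and on $\sqrt{\rho^n}\,\dot v^n$ in $L_2(\R_+\times\T^2)$.

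To upgrade to the time-weighted regularity of \eqref{eq:regT2}, I would apply the material derivative $\partial_t+v\cdot\nabla$ to the momentum equation, test the result against $t\dot v$ and control the commutators by the Hoff bound just established; this yields $\sqrt{\rho t}\,\dot v\in L_{\infty,loc}(L_2)$ and $\sqrt t\,\nabla\dot v\in L_{2,loc}(L_2)$. The near-$L_2(L_\infty)$ regularity of $\div v$ and $\curl v$ then follows from the elliptic identities for $G$ and $\omega$ combined with the 2D embedding $W^{1,p}\hookrightarrow L_\infty$ for $p>2$. The passage to the limit $n\to\infty$ relies on weak-$\ast$ compactness for $\rho^n$ and Aubin--Lions for $v^n$; the delicate step is the strong convergence $\rho^n\to\rho$ in $\cC(L_p)$, which I would obtain by a DiPerna--Lions renormalisation argument, the near-$L_2(L_\infty)$ bound on $\div v$ being just enough to compensate for the lack of Lipschitz regularity of $v$. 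The conservation laws \eqref{eq:conservation} and the full energy equality \eqref{eq:energy} then pass to the limit, thanks to the strong convergence of $\sqrt{\rho^n}\,v^n$ in $\cC(L_2)$ inherited from the above.

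The main obstacle is the global closure of Hoff's estimate in this low-regularity regime: since $\rho$ may vanish, one cannot recover $\dot v$ pointwise, and the cubic term $\|\nabla v\|_3^3$ must be absorbed purely through the dissipation $\|\sqrt\rho\,\dot v\|_2^2$ together with the smallness of $\div v$ brought by large $\nu$. The hypothesis on $\|\div v_0\|_2$ is exactly what is needed to keep the initial Hoff energy uniform in $\nu$; once this central a priori bound is secured, the time-weighted regularity, the compactness of $\rho$ and the energy balance all follow from standard but careful arguments.
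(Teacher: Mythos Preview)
Your proposal captures the broad architecture---regularize, derive Hoff-type a priori bounds centered on $G$ and $\omega$, upgrade via time-weighted estimates, pass to the limit---but it contains two genuine gaps that the paper resolves with specific machinery.

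First and most seriously, the claim that ``the transport equation for $\rho^n$ then gives $\rho^n\in L_\infty(\R_+\times\T^2)$ uniformly'' right after the basic energy balance is false. The energy balance only places $\div v$ in $L_2(\R_+;L_2)$, while a uniform $L_\infty$ bound on $\rho$ along characteristics requires $\div v\in L_{1,loc}(\R_+;L_\infty)$---which you only obtain \emph{after} the weighted estimates. The paper breaks this circularity by introducing the approximate damped mode $F:=\log\rho-\nu^{-1}(-\Delta)^{-1}\div(\rho v)$, which satisfies a transport equation with damping $\gamma\nu^{-1}F^+$ and a commutator source controllable via the Coifman--Lions--Meyer--Semmes inequality. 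Combined with the $H^1$ energy estimate (itself proved \emph{assuming} a density bound $\rho^*$), a bootstrap then closes the density estimate for $\nu$ large. This is the content of Proposition~\ref{p:boundrho2} and is not a byproduct of the energy identity.

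Second, your elliptic bound $\|\nabla^2 v\|_2\lesssim\cdots+\nu^{-1}\|\nabla P\|_2$ invokes a quantity that is not uniformly controlled: the target density is merely in $L_\infty$, so $\nabla P$ has no Lebesgue regularity, and for the approximations $\|\nabla P^n\|_2$ blows up as $n\to\infty$. The paper never estimates $\nabla^2 v$; only $\nabla^2\cP v$ and $\nabla G$ appear in \eqref{eq:regT2}, and the convective term $\|\sqrt\rho\,v\cdot\nabla v\|_2^2$ is handled through the decomposition $\nabla v=\nabla\cP v-\nu^{-1}\nabla^2(-\Delta)^{-1}(\wt G+\wt P)$ together with Desjardins' logarithmic interpolation inequality \eqref{eq:interpo-des}. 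The latter is essential precisely because $\rho$ may vanish, so neither $\|v\|_2$ nor the standard Ladyzhenskaya inequality is available; your route through $\|\nabla v\|_3^3\lesssim\|\nabla v\|_2^2\|\nabla^2 v\|_2$ and absorption into $\tfrac12\|\sqrt\rho\,\dot v\|_2^2$ does not address this vacuum issue and, via the $\|\nabla P\|_2$ term, fails to close uniformly.
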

\begin{rmk} For simplicity, we focussed on  the physically relevant case where the
pressure function $P$ is given by $P(\rho)=a\rho^\gamma$ for some $\gamma\geq1$ and $a>0.$
However, the above theorem  remains true whenever: 
\begin{equation}\label{eq:condP}
P\ \hbox{  is a }\  C^1\ \hbox{ nonnegative function on }\  \R_+\ \hbox{ such that }\ \rho\mapsto \rho^{-1}P(\rho)\hbox{ is nondecreasing.}\end{equation}
%{\bf R. V\'erifier dans \'etape 2 les conditions suppl\'ementaires.}
\end{rmk}
In the case of a linear pressure law, our existence result is supplemented with uniqueness.
\begin{thm}\label{thm:uniqueness} Assume  $P(\rho)=a\rho$ for some $a>0.$
Then, for any $T>0,$ any nonnegative $\rho_0$ in $L_\infty(\T^2)$  and $v_0$ in $H^1(\T^2),$
and any viscosity coefficients $(\lambda,\mu),$ there exists at most one  solution to System \eqref{CNS} 
supplemented with  data $(\rho_0,v_0)$ on $[0,T]\times\T^2,$ with the regularity given 
 in Theorem \ref{thm:global2}   (restricted to interval $[0,T]$). 
\end{thm}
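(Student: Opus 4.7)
Here is my plan. Consider two solutions $(\rho_1,v_1)$ and $(\rho_2,v_2)$ of \eqref{CNS} on $[0,T]\times\T^2$ sharing the same data and satisfying the regularity of Theorem \ref{thm:global2}, and set $\delta\rho:=\rho_1-\rho_2,$ $\delta v:=v_1-v_2.$ Subtracting the two continuity and momentum equations (the latter written as $\rho\dot v=\mu\Delta v+(\lambda+\mu)\nabla\div v-a\nabla\rho$) yields the linearised system
\begin{align*}
&\partial_t\delta\rho+\div(\delta\rho\,v_1)+\div(\rho_2\,\delta v)=0,\\
&\rho_2(\partial_t\delta v+v_2\cdot\nabla\delta v)-\mu\Delta\delta v-(\lambda+\mu)\nabla\div\delta v+a\nabla\delta\rho=-\rho_2\delta v\cdot\nabla v_1-\delta\rho\,\dot v_1.
\end{align*}
Since only $L_\infty$ control is available on the $\rho_i$ (and they may vanish on large sets), no $L_2$ estimate on $\delta\rho$ can be closed directly: it would produce an uncontrollable factor $\nabla\rho_2\cdot\delta v.$ The natural setting is thus $\dot H^{-1}(\T^2)$ for $\delta\rho$ and the $\sqrt{\rho_2}$-weighted $L_2$ norm (backed by the viscous dissipation) for $\delta v.$ The linearity $P=a\rho$ is precisely what makes this coupling close: the pressure generates the single gradient $a\nabla\delta\rho,$ which is exactly the form that pairs cleanly with $\|\delta\rho\|_{\dot H^{-1}}$ through integration by parts.

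First, I would estimate $\delta\rho$ in $\dot H^{-1}.$ Set $\Pi:=(-\Delta)^{-1}\delta\rho,$ well defined on $\T^2$ with zero mean because conservation of mass \eqref{eq:conservation} ensures $\int_{\T^2}\delta\rho(t,x)\,dx=0$ for all $t.$ Testing the continuity difference against $\Pi$ and integrating by parts gives $\tfrac12\tfrac{d}{dt}\|\nabla\Pi\|_2^2=\int_{\T^2}\nabla\Pi\cdot(\rho_2\delta v+\delta\rho\,v_1)\,dx,$ which with $\|\rho_2\|_\infty+\|\delta\rho\|_\infty\le 3\|\rho_0\|_\infty$ yields $\tfrac{d}{dt}\|\nabla\Pi\|_2\le C(\|\delta v\|_2+\|v_1\|_2)$ where $C$ depends only on $\|\rho_0\|_\infty.$ Crucially, the estimate never requires a gradient on $\delta\rho.$

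Next, test the momentum difference against $\delta v.$ Using $\partial_t\rho_2+\div(\rho_2 v_2)=0$ to absorb the time derivative produces
\[
\tfrac12\tfrac{d}{dt}\!\!\int\!\rho_2|\delta v|^2+\mu\|\nabla\delta v\|_2^2+(\lambda+\mu)\|\div\delta v\|_2^2=-\!\!\int\!\rho_2(\delta v\!\cdot\!\nabla v_1)\!\cdot\!\delta v-\!\!\int\!\delta\rho\,\dot v_1\!\cdot\!\delta v+a\!\!\int\!\delta\rho\,\div\delta v.
\]
The pressure term $a\int\delta\rho\,\div\delta v$ is the main technical difficulty, since a naive bound via $\|\delta\rho\|_\infty$ produces a constant that does not vanish with the solutions. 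I would handle it by substituting $a\delta\rho=\nu\div\delta v-\delta G$ (with $\delta G:=\nu\div\delta v-a\delta\rho$ the effective viscous flux) and reorganising the identity so that the resulting divergence contribution on the left becomes $\mu\|\curl\delta v\|_2^2+\tfrac1{\nu}\|\delta G\|_2^2,$ with a residual cross term $\tfrac{a}{\nu}\int\nabla\delta G\cdot\nabla\Pi$ (obtained after writing $\delta\rho=-\Delta\Pi$ and integrating by parts) controlled by Young's inequality against $\|\nabla\Pi\|_2^2$ and $\|\nabla\delta G\|_2^2;$ the second piece is absorbed into $\tfrac1{\nu}\|\delta G\|_2^2$ thanks to $\nabla G_i\in L_2(\R_+\times\T^2)$ from \eqref{eq:regT2}. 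The convective term is bounded by $\|\rho_0\|_\infty\|\delta v\|_4^2\|\nabla v_1\|_2$ and closed via the 2D Gagliardo--Nirenberg inequality $\|\delta v\|_4^2\lesssim\|\delta v\|_2\|\nabla\delta v\|_2+\|\delta v\|_2^2$ together with Calder\'on--Zygmund (which upgrades $\div v_1,\curl v_1\in L_{r,loc}(\R_+;L_\infty)$ to $\nabla v_1\in L_{r,loc}(\R_+;L_p)$ for every $p<\infty$); an $\eta$-fraction of $\mu\|\nabla\delta v\|_2^2$ absorbs the top-order part. The $\dot v_1$ term is handled through the parabolic regularity \eqref{eq:regT2}, notably $\sqrt{\rho}\,\dot v\in L_2$ and $\sqrt t\,\nabla\dot v\in L_{2,loc}(\R_+;L_2).$ Combining both steps gives a coupled differential inequality on $X(t):=\|\sqrt{\rho_2}\,\delta v\|_2^2+\|\nabla\Pi\|_2^2$ with $L_{1,loc}(\R_+)$ coefficients, and Gronwall (or Osgood, should the borderline integrability $r<2$ force it) yields $X\equiv 0$ on $[0,T].$

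The hardest point I expect is this interplay between the rough (possibly vanishing) density and the non-Lipschitz velocity ($\nabla v\notin L_{1,loc}(\R_+;L_\infty)$): the former blocks classical $L_2$ transport arguments on $\delta\rho$ and prevents $\|\sqrt{\rho_2}\,\delta v\|_2$ from controlling $\delta v$ on the vacuum set (handled by retaining the full dissipation and the momentum-conservation control of the mean of $\delta v$); the latter rules out a Lipschitz-flow argument. Both are navigable \emph{only} because $P=a\rho$ is linear: the pressure reduces to the single gradient $a\nabla\delta\rho$ that couples naturally with $\|\delta\rho\|_{\dot H^{-1}},$ and the effective viscous flux $\delta G$ inherits just enough parabolic regularity from \eqref{eq:regT2} to close the pressure contribution. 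For $P(\rho)=\rho^\gamma$ with $\gamma>1,$ the difference of pressures would contain the uncontrollable factor $\nabla(\rho_1^{\gamma-1}+\rho_2^{\gamma-1}),$ destroying this mechanism---which is exactly why uniqueness remains open there.
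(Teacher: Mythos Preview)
Your transport estimate for $\delta\rho$ in $\dot H^{-1}$ is the fatal gap. You bound
\[
\int_{\T^2}\nabla\Pi\cdot(\delta\rho\,v_1)\,dx\le\|\delta\rho\|_\infty\|v_1\|_2\|\nabla\Pi\|_2
\]
and conclude $\tfrac{d}{dt}\|\nabla\Pi\|_2\le C(\|\delta v\|_2+\|v_1\|_2)$. But the term $C\|v_1\|_2$ is \emph{not} a difference quantity: integrating from $0$ gives $\|\nabla\Pi(t)\|_2\le C\int_0^t\|\delta v\|_2\,d\tau+C\int_0^t\|v_1\|_2\,d\tau$, and the second integral is strictly positive for any nontrivial solution. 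No Gronwall or Osgood argument on $X=\|\sqrt{\rho_2}\,\delta v\|_2^2+\|\nabla\Pi\|_2^2$ can then force $X\equiv 0$, because the right-hand side is not a multiple of $X$. The estimate simply does not close.

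The correct treatment of $\int v_1\cdot\nabla\Pi\,\delta\rho$ is to substitute $\delta\rho=-\Delta\Pi$ and integrate by parts once more, obtaining
\[
\int_{\T^2} v_1^j\,\partial_j\Pi\,\Delta\Pi\,dx=-\int_{\T^2}\partial_k v_1^j\,\partial_j\Pi\,\partial_k\Pi\,dx+\tfrac12\int_{\T^2}\div v_1\,|\nabla\Pi|^2\,dx,
\]
which is quadratic in $\nabla\Pi$ but carries a factor $\nabla v_1$. Since $\nabla v_1\notin L_1(0,T;L_\infty)$ (only $\div v_1,\curl v_1\in L_{r,loc}(\R_+;L_\infty)$ for $r<2$, hence $\nabla v_1\in L_2(0,T;BMO)$), an $L_\infty$ bound is unavailable. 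The paper's key device is the $BMO$--$\mathcal H^1$ duality together with the logarithmic inequality $\|f\|_{\mathcal H^1}\le C\|f\|_1\bigl(|\log\|f\|_1|+\log(\e+\|f\|_\infty)\bigr)$, which yields
\[
\biggl|\int_{\T^2}\partial_k v_1^j\,\partial_j\Pi\,\partial_k\Pi\,dx\biggr|\le C\|\nabla v_1\|_{BMO}\|\nabla\Pi\|_2^2\bigl(1+\bigl|\log\|\nabla\Pi\|_2\bigr|\bigr),
\]
and only then can one close via Osgood's lemma. You allude to Osgood as a contingency, but the logarithmic structure does not arise from ``borderline integrability $r<2$'' of $\div v,\curl v$; it arises precisely here, from the $BMO$ control of $\nabla v_1$ in the transport commutator.

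A secondary issue: your direct energy estimate on $\delta v$ has to absorb $\int\delta\rho\,\dot v_1\cdot\delta v$ with $\delta\rho$ measured only in $\dot H^{-1}$, which forces $\dot v_1\cdot\delta v$ into $\dot H^1$. Since only $\sqrt t\,\nabla\dot v_1\in L_2$, this term is singular like $t^{-1/2}$ at $t=0$, and a bare Gronwall fails. The paper circumvents this by working with the time-weighted quantity $Z(t)=\sup_{\tau\le t}\tau^{-1/2}\|\delta\rho(\tau)\|_{\dot H^{-1}}$ and, for the velocity, by a \emph{duality argument}: one solves a backward parabolic problem $\rho w_t+\rho v\cdot\nabla w+\mu\Delta w+(\lambda+\mu)\nabla\div w=-\rho\,\delta v$ with $w|_{t=T}=0$, derives $H^1$--$H^2$ bounds on $w$ in terms of $\|\sqrt\rho\,\delta v\|_{L_2(0,T\times\T^2)}$, and then tests the momentum difference against $w$. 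This transfers the regularity burden to $w$ and produces the clean coupling $\|\sqrt\rho\,\delta v\|_{L_2}\lesssim T^{1/3}Z(T)$ needed to feed back into the Osgood inequality for $Z$. Your direct approach would have to reproduce this time-weight bookkeeping by hand, which you do not address.
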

Since the norms  of the solution constructed  in 
Theorem~\ref{thm:global2} may be  bounded uniformly  
with respect to $\nu,$  one gets almost for free  the all-time convergence
when $\nu$ tends to $+\infty$  to the following inhomogeneous incompressible Navier-Stokes equations:
\begin{equation}\label{INS}
\left\{ \begin{array}{lcr}
  \rho_t + \div (\rho v) =0 & \mbox{ in } & \R_+\times\T^2,\\[1ex]
  (\rho v)_t + \div(\rho v\otimes v) -\mu\Delta v + \nabla \Pi =0 & \mbox{ in } & \R_+ \times \T^2,\\[1ex]
  \div v=0 & \mbox{ in } & \R_+ \times \T^2.
 \end{array}\right.
\end{equation}
Let us give  the  statement in the case of a fixed initial data (for simplicity):
\begin{thm}\label{thm:asymptotic}
 Fix some data $(\rho_0,v_0)$ in $L_\infty(\T^2)\times H^1(\T^2)$ satisfying
 $\div v_0=0$ and $\rho_0\geq0,$  and denote  by $(\rho^\nu,v^\nu)$ the corresponding
global solution of \eqref{CNS} of Theorem \ref{thm:global2} for $\nu\geq \nu_0.$

Then, for $\nu$ going to $\infty,$
 the whole family $(\rho^\nu,v^\nu)$ converges to the unique global solution 
of System \eqref{INS} with initial data $(\rho_0,v_0)$ given by Theorem 2.1 of \cite{DM-1}, 
and we have
\begin{equation}\label{eq:div}
\div v^\nu=\cO(\nu^{-1/2})\  \hbox{ in }\ L_2(\R_+\times\T^2)\cap L_\infty(\R_+;L_2(\T^2)),\end{equation}
and even $\div v^\nu=\cO(\nu^{\varepsilon-1})$ in $L_{2-\eps,loc}(0,T;L_\infty(\T^2))$ for all 
$\eps\in(0,1)$ and $T>0.$
\end{thm}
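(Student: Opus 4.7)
My plan is to follow a compactness-uniqueness scheme: use the uniform-in-$\nu$ estimates of Theorem~\ref{thm:global2} to extract a converging subsequence, identify the limit as a solution of \eqref{INS}, and conclude that the whole family converges by invoking the uniqueness part of Theorem~2.1 of \cite{DM-1}. Since $\div v_0=0$, the smallness condition $\|\div v_0\|_2\leq K\nu^{-1/2}$ of Theorem~\ref{thm:global2} is trivially satisfied (with $K=1$, say), and $\nu_0$ depends only on the fixed data. For $\nu\geq\nu_0$, the regularity statement \eqref{eq:regT2} and the energy balance \eqref{eq:energy} hold with bounds uniform in $\nu$.

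The energy balance yields $\nu\|\div v^\nu\|_{L_2(\R_+\times\T^2)}^2\leq E_0$, giving the $L_2$ part of \eqref{eq:div}. The $L_\infty(\R_+;L_2)$ part would be read off the $H^1$-type energy estimate behind Theorem~\ref{thm:global2}, which controls $\nu\|\div v^\nu\|_{L_\infty(\R_+;L_2)}^2$ uniformly (this is precisely the functional whose structure motivates the smallness assumption on $\div v_0$). Next, the uniform bounds give $v^\nu\rightharpoonup v$ weak-$*$ in $L_\infty(\R_+;H^1)$ and $\rho^\nu\rightharpoonup\rho$ weak-$*$ in $L_\infty(\R_+\times\T^2)$. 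I would upgrade to strong convergence $v^\nu\to v$ in $L_{2,loc}(\R_+;L_p)$ for any $p<\infty$ via Aubin-Lions, using the bound on $\sqrt{\rho^\nu t}\,\dot v^\nu$ in $L_{\infty,loc}(\R_+;L_2)$ to control $\partial_t v^\nu$ away from the vacuum; and strong convergence $\rho^\nu\to\rho$ in $\cC([0,T];L_p)$ for $p<\infty$ via a DiPerna--Lions renormalization argument for the continuity equation, which is admissible since $v^\nu$ is uniformly bounded in $L_2(\R_+;H^1)$. These convergences let us pass to the limit in $\rho^\nu v^\nu$, $\rho^\nu v^\nu\otimes v^\nu$ and $P(\rho^\nu)$; the constraint $\div v=0$ follows from the first step. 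For the singular combination $(\lambda+\mu)\nabla\div v^\nu-\nabla P(\rho^\nu)$ in the momentum equation, I would use the effective viscous flux $G^\nu:=\nu\div v^\nu-P(\rho^\nu)$, whose gradient is uniformly bounded in $L_2(\R_+\times\T^2)$: rewriting the momentum equation with $\nabla G^\nu$ in place of that pair and passing to the weak limit identifies $\nabla G^\nu\rightharpoonup\nabla\Pi$, with $\Pi$ the pressure of \eqref{INS}. Uniqueness from Theorem~2.1 of \cite{DM-1} then upgrades subsequential convergence to convergence of the whole family.

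For the refined estimate in $L_{2-\eps,loc}(0,T;L_\infty)$, I would write $\nu\div v^\nu=G^\nu+P(\rho^\nu)$ and combine the uniform $L_\infty$-bound on $P(\rho^\nu)$ with a uniform-in-$\nu$ bound on $G^\nu$ in $L_{2-\eps,loc}(0,T;L_\infty(\T^2))$; the latter is a consequence of $\nabla G^\nu\in L_2(\R_+\times\T^2)$ together with the same log-type interpolation that produces the $L_{r,loc}(L_\infty)$ bound on $\div v^\nu$ and $\curl v^\nu$ in Theorem~\ref{thm:global2}. This gives $\div v^\nu=\cO(\nu^{-1})$ in that space, which is slightly stronger than the stated $\cO(\nu^{\eps-1})$; if the constant in the log-interpolation degrades like $\eps^{-1}$, a trivial interpolation with the $\cO(\nu^{-1/2})$ bound in $L_\infty(\R_+;L_2)$ produces exactly the claimed form. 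The hard part will be the strong convergence of the density: since $\rho^\nu$ is only bounded, the DiPerna--Lions commutator argument must be carefully applied with $v^\nu$ of rather limited regularity, and coupled correctly with the strong convergence of $v^\nu$ so that the nonlinear terms in the momentum equation, in particular $\rho^\nu v^\nu\otimes v^\nu$, pass to the limit in the sense of distributions.
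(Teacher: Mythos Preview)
Your compactness--uniqueness scheme is exactly the one the paper uses, and your treatment of \eqref{eq:div} and of the refined $L_{2-\eps}(L_\infty)$ rate (via $\nu\,\div v^\nu=\wt G^\nu+\wt P^\nu$ and the interpolation of Corollary~\ref{c:c}) is correct. Two points of comparison are worth flagging.

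First, your route to compactness of $v^\nu$ through $\sqrt{\rho^\nu t}\,\dot v^\nu\in L_\infty(L_2)$ ``away from the vacuum'' is not what the paper does, and is awkward precisely because $\rho^\nu$ may vanish on sets of positive measure. The paper instead uses the \emph{unweighted} bound $\sqrt t\,\nabla\dot v^\nu\in L_2(0,T;L_2)$ from \eqref{eq:regT2}: since $\int_{\T^2}\rho^\nu\dot v^\nu\,dx=0$, the Poincar\'e inequality~\eqref{eq:poincarep} gives $\sqrt t\,\dot v^\nu\in L_2(0,T;L_2)$, hence $\sqrt t\,v_t^\nu\in L_2(0,T;L_2)$, and a dedicated lemma from~\cite{DM-1} then yields $v^\nu$ bounded in $H^{1/4}(0,T\times\T^2)$. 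No ``away from vacuum'' restriction is needed.

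Second, you flag strong convergence of $\rho^\nu$ as the hard part and plan to get it by DiPerna--Lions stability. The paper does not go this route and in fact never proves strong convergence of $\rho^\nu$ for Theorem~\ref{thm:asymptotic}: once the momentum equation is written as $\partial_t(\rho^\nu v^\nu)+\div(\rho^\nu v^\nu\!\otimes v^\nu)-\mu\Delta\cP v^\nu-\nabla G^\nu=0$, the pressure is absorbed into $G^\nu$, and weak-$*$ convergence of $\rho^\nu$ together with strong convergence of $v^\nu$ handles all the nonlinear terms. What the paper does prove (mirroring Step~4 of the existence argument) is strong convergence of $G^\nu$ in $L_{2,loc}(\R_+\times\T^2)$, obtained by bounding $\sqrt t\,G^\nu_t$ in $L_2(0,T;W^{-1}_p)$ and interpolating with the $L_\infty(L_2)\cap L_2(H^1)$ bound. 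Your DiPerna--Lions approach would also work once $v^\nu\to v$ strongly, but it is extra effort for something the limit passage does not actually require.
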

\begin{rmk} To the best of our knowledge, Theorem \ref{thm:asymptotic} is the  first example 
of a global-in-time result  of convergence from \eqref{CNS}
to \eqref{INS} \emph{in the truly inhomogeneous framework} (see  our recent work in \cite{DM-TJM}
for an example of almost global convergence).   
\end{rmk}

%\begin{thm}\label{thm:asymptotic} Fix $\mu>0$.
%Take an initial data $(\rho_0,v_0)$ fulfilling the hypotheses of Theorem \ref{thm:global2} with, in addition, $\div v_0=0$
%and denote by $(\rho^\nu,v^\nu)$ the corresponding solution to \eqref{CNS}.  Then 
%$(\rho^\nu,v^\nu)$ tends weakly to the unique  global solution $(\rho,v)$ to \eqref{INS}. \end{thm}

\begin{rmk}\label{rm-3}
The above  results of existence, uniqueness and convergence  are  valid in $\T^3$ either 
locally in time for large data, or globally under a suitable scaling invariant smallness condition
on the velocity (no smallness is required for the density). 
The reader is referred to Appendix \ref{s:d=3} for  more details. 
\end{rmk}

Let us report on the main ideas leading to our results in dimension two.  
Assuming that we are given a solution $(\rho,v)$ to \eqref{CNS}, 
the first step  is to establish  global-in-time  a priori estimates for the $H^1$ norm of $v$ in terms of the data, of the parameters of the system and of  a (given) upper bound of the density. 
The overall strategy  has some similarities with our recent 
work  \cite{DM-1}
dedicated to System \eqref{INS}. However, the compressible situation 
is more complex (as the reader will judge by himself in the next section) since
one cannot expect $\nabla P$ to be in any Lebesgue space.   For that reason, we shall 
consider the   \emph{viscous effective flux} $G$ defined by 
 \begin{equation}\label{eq:G} 
 G:=\nu\,\div v -P\quad\hbox{with }\ \nu:=\lambda+2\mu\end{equation}
 since it has better regularity  than $\div v$ or $P$ taken separately, 
as observed before  by D. Hoff \cite{Hoff1} and P.-L. Lions \cite{Lions} when constructing
intermediate or  weak solutions.
 Rewriting the momentum equation in terms of $G$ and $\cP v$ (the divergence-free  part of $v$) will spare us   making  integrability assumptions on $\nabla\rho,$  in contrast with our recent work in \cite{DM-TJM}.
 
 The second key ingredient of that step is the following  logarithmic interpolation inequality 
\begin{equation}\label{eq:interpo-des}
\biggl(\int_{\T^2} \rho|v|^4\,dx\biggr)^{\frac12}\leq C \|\sqrt\rho v\|_{2}
\|\nabla v\|_{2}\log^{\frac12}\biggl(\e+\|\wt\rho\|_{2}
+\frac{\|\rho\|_{2}\|\nabla v\|_{2}^2}{\|\sqrt \rho v\|_{2}^2}\biggr)\with \wt\rho:=\rho-1,
\end{equation}
 that has been discovered by B. Desjardins \cite{Des-CPDE} and  is an appropriate substitute of the well-known Ladyzhenskaya inequality
$$\|v\|_4^2\leq C\|v\|_2\|\nabla v\|_2$$
since  bounds are available on  $\|\sqrt\rho v\|_{2}$ (through \eqref{eq:energy}), but  not on $\|v\|_2.$

Then, the main idea is to introduce a suitable modified energy functional that
contains informations on  the $H^1$ norm of $v,$ 
 and may be bounded uniformly on $\R_+.$  
%  This follows the arguments employed by B. Desjardins in   \cite{Des-CPDE}. 
Our definition
% of a modified energy functional is slightly   different, which 
enables us, after tracking carefully the dependency of the estimates
  with respect to the viscosity coefficients, to 
  exhibit   global-in-time bounds depending only on the data and on  $\rho^*:=\|\rho\|_\infty,$ 
if $\nu$ is large enough (results in \cite{Des-CPDE} were local). 

\smallbreak
The goal of the second step  is to bound $\rho^*$ in terms of the data.  
As in \cite{Des-CPDE},  we shall rather consider the following quantity 
$$F:=\log\rho -\nu^{-1}(-\Delta)^{-1}\div (\rho v)$$
that may be seen as an approximate damped mode 
associated to \eqref{CNS}.  
The new achievement  here is that, by combining with the first step and 
a  bootstrap argument, one gets  a \emph{global-in-time} control on $\rho^*$ in terms of the data only, provided that $\nu$ is large enough.
\smallbreak
%In order to have a chance to prove uniqueness of the solutions, we need to exhibit more 
%regularity for $\nabla v.$ Ideally, since the system 
%under consideration is partially quasilinear hyperbolic of order one, 
%it would be good to have $\nabla v$   in $L_{r,loc}(\R_+;L_\infty).$
Step 3  aims at  proving  that   $\div v$ and $\curl v$   are  in $L_{r,loc}(\R_+;L_\infty)$
for some $r>1.$  To achieve it,  the general idea
is to  use time weighted estimates to glean some regularity on $v_t,$ then 
to transfer time regularity
to space regularity thanks to  elliptic estimates  and functional embeddings.
However, in contrast with the incompressible case studied in \cite{DM-1}, 
 it is no longer possible to discard the pressure term 
by means of the divergence free property,  and it is actually more
appropriate to work with the \emph{convective derivative $\dot v:=v_t+v\cdot\nabla v$}.
   In the end, we shall get
 bounds on $\sqrt{\rho t}\,\dot v$ in $L_{\infty,loc}(\R_+;L_2)$ and $\sqrt t\nabla\dot v$ in $L_{2,loc}(\R_+;L_2),$ 
 from which we will eventually bound   $\div v$ and $\curl v$  in $L_{r,loc}(\R_+;L_\infty).$
\smallbreak
Steps 1 to 3 were  formal a priori estimates  for smooth solutions. 
To complete the proof of existence, we mollify the initial density
so as to make it strictly positive and  regular. 
Then, one can resort to  classical results to construct a local-in-time smooth solution 
corresponding to those data. The difficulty  is to establish that, indeed, 
the control of norms that has been obtained so far  allows to extend the solution for all time. 
Once it has been done,  the uniform bounds given by steps 1 to 3  allow 
to pass to the limit and to complete the proof of the global existence. 
In fact, since  compared to weak solutions theory, 
 more regularity is available on the velocity,  passing to the limit is much more direct than in \cite{feireisl} or \cite{Lions}.
 Furthermore, as the bounds from steps 1 to 3 have some uniformity with respect to $\nu,$
 similar arguments allow to justify the convergence of \eqref{CNS} to \eqref{INS}, whence Theorem \ref{thm:asymptotic}.
\smallbreak
Since steps 1 to 3  just give that $\div v$ and $\curl v$ are 
in $L_{r,loc}(\R_+;L_\infty)$  for some $r>1,$
 we miss by a little the property that $\nabla v$ is in $L_{1,loc}(\R_+;L_\infty)$ 
and   $v$ need not have a Lipschitz flow. Therefore,  in contrast with what has been done for \eqref{INS}
in \cite{DM-1}  or
for \eqref{CNS} in \cite{DFP},  it is not clear whether recasting the compressible Navier-Stokes equations
in Lagrangian coordinates may help to prove uniqueness. 
However, we know from the previous steps  
that $\nabla\cP v$ and $G$  are in $L_{2,loc}(\R_+;H^1),$ whence
$\nabla v$ is in $L_{2,loc}(\R_+;BMO).$
In the particular case of a \emph{linear} pressure law,  this turns out to 
be enough to control the difference of two solutions   in $L_\infty(0,T;\dot H^{-1})$ 
for the density and $L_2(0,T;L_2)$ for the velocity. 
The proof has some similarities with that of D. Hoff in \cite{Hoff2} but
does not require  Lagrangian coordinates. In fact,  we overcome  that 
$\nabla v  \notin L_{1,loc}(\R_+;L_\infty)$ by combining the information that 
$\nabla v\in L_{2,loc}(\R_+;BMO)$ with a suitable logarithmic interpolation 
inequality from \cite{M10}.
\medbreak
Let us finally point out an interesting  application of Theorem  \ref{thm:global2}
pertaining to the case where the initial density has nontrivial vacuum regions. 
\begin{col}\label{c:patch}
 Let the assumptions of Theorem \ref{thm:global2} be in force, and denote by  $(\rho,v)$ 
  a global solution given by Theorem \ref{thm:global2}.  Let $X$ be the  (generalized) flow of $v,$ defined  by  
  \begin{equation}\label{eq:col1} X(t,y)= y+ \int_0^t  X(\tau,v(\tau,y))\,d\tau,\quad t\geq0,\ y\in\T^2.\end{equation}
Then, the following results hold:
\begin{enumerate}
\item Let  $V_0:=\rho_0^{-1}(\{0\}).$ Then  $\rho_t^{-1}(\{0\})=V_t$
  with   $V_t:=X(t,V_0).$  Furthermore, if $V_0$ is an open set with Lipschitz boundary, 
 then $V_t$ is an open set with  $C^{0,\alpha_t}$ regularity. 
  \item  If  $ \rho_0=1_{A_0}$ and $A_t:=X(t,A_0),$ 
 then $\inf_{x\in A_t} \rho(t,x)>0$ for all $t>0.$ Furthermore, if 
  $A_0$ is a Lipschitz open set, then  $A_t$ has  $C^{0,\alpha_t}$ regularity.  
\end{enumerate}
Above, $\alpha_t>0$ is a continuously decreasing function of $t$
 and is such that $\alpha_0=1.$ 
  \end{col}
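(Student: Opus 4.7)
My plan is to combine the method of characteristics for the continuity equation \eqref{CNS}$_1$ with the Hölder regularity of the flow $X$ deduced from the BMO control of $\nabla v$ already highlighted in the introduction.

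\emph{Regularity of the flow.} From Theorem \ref{thm:global2}, $\nabla\cP v\in L^2_{loc}(\R_+;H^1)$ and $G\in L^2_{loc}(\R_+;H^1)$; since $\nu\,\div v=G+P$ with $P\in L^\infty$, the Riesz-transform identity $\nabla\cQ v=\nabla(-\Delta)^{-1}\nabla\div v$ shows that $\nabla v\in L^2_{loc}(\R_+;BMO(\T^2))$ (using the critical embedding $H^1(\T^2)\hookrightarrow BMO(\T^2)$). By the John--Nirenberg inequality, $v$ enjoys a log-Lipschitz modulus of continuity
\[
|v(t,x)-v(t,y)|\le \eta(t)\,|x-y|\bigl(1+\bigl|\log|x-y|\bigr|\bigr),\qquad \eta\in L^2_{loc}(\R_+).
\]
Osgood's theorem then provides a unique continuous solution to \eqref{eq:col1}, and for every $t\geq 0$ the map $X(t,\cdot):\T^2\to\T^2$ is a homeomorphism whose direct and inverse maps are $\alpha_t$-Hölder continuous with
\[
\alpha_t=\exp\Bigl(-C\!\int_0^t\!\eta(\tau)\,d\tau\Bigr),
\]
a continuously decreasing function of $t$ satisfying $\alpha_0=1$.

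\emph{Characteristic formula for the density.} Because $\div v\in L^1_{loc}(\R_+;L^\infty)$ (a consequence of the $L_{r,loc}(L^\infty)$ control with $r<2$ stated in Theorem \ref{thm:global2}) and $v$ is log-Lipschitz, the DiPerna--Lions theory applies: $\rho$ is the unique renormalized solution of the continuity equation, and one has the pointwise representation
\[
\rho(t,X(t,y))=\rho_0(y)\,\exp\!\Bigl(-\!\int_0^t(\div v)(\tau,X(\tau,y))\,d\tau\Bigr),\qquad t\geq 0,\ y\in\T^2.
\]
The exponential factor lies in $[e^{-M_t},e^{M_t}]$ with $M_t:=\|\div v\|_{L^1(0,t;L^\infty)}<\infty$. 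This yields $\rho(t,x)=0\ \Leftrightarrow\ x\in X(t,V_0)=V_t$, which is the first assertion of (1); and taking $\rho_0=\mathbf{1}_{A_0}$ gives the uniform bound $\inf_{x\in A_t}\rho(t,x)\ge e^{-M_t}>0$, which is the first assertion of (2).

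\emph{Boundary regularity.} Assume now that $V_0$ (resp.\ $A_0$) has Lipschitz boundary. Locally, $\partial V_0$ is the graph of a Lipschitz function in a suitable rotated chart. Composing with the Hölder homeomorphism $X(t,\cdot)$ and with its Hölder inverse, a standard straightening argument shows that $\partial V_t=X(t,\partial V_0)$ is locally the graph of a $C^{0,\alpha_t}$ function: the tangential component restricted to the chart is a Hölder homeomorphism of the parameter domain, the normal component is $\alpha_t$-Hölder, and reparametrizing yields the graph description. The same reasoning applies to $\partial A_t$. The principal difficulty of the whole corollary is really step one, namely the quantitative extraction of a log-Lipschitz modulus for $v$ from the $L^2_{loc}(BMO)$ bound on $\nabla v$ with an exponent $\alpha_t$ that depends continuously on time and satisfies $\alpha_0=1$; everything else is classical DiPerna--Lions theory or a routine transfer of regularity under a Hölder diffeomorphism.
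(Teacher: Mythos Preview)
Your argument is correct and follows the same overall strategy as the paper: extract a log-Lipschitz modulus for $v$, build the generalized flow via Osgood, read off the characteristic formula for $\rho$, and propagate boundary regularity. Two implementation details differ.

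For the log-Lipschitz bound, the paper goes through the $L_{r,loc}(\R_+;L_\infty)$ control on $\div v$ and $\curl v$ stated in Theorem~\ref{thm:global2}: since Riesz transforms map $L_\infty$ to $BMO$, one has directly $\|v\|_{LL}\le\|v\|_2+\|\div v\|_\infty+\|\curl v\|_\infty$, hence $v\in L^1_{loc}(\R_+;LL)$. Your route via $\nabla\cP v,\,G\in L^2_{loc}(\R_+;H^1)\hookrightarrow L^2_{loc}(\R_+;BMO)$ combined with $\nabla^2(-\Delta)^{-1}:L_\infty\to BMO$ for the pressure part reaches the same conclusion and is equally legitimate.

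For the boundary regularity, your ``straightening argument'' is the one place where the write-up is loose: composing a Lipschitz graph with a H\"older homeomorphism of $\T^2$ does not automatically yield a graph, because the tangential component of $X(t,\cdot)$ restricted to a chart need not be a homeomorphism of the parameter interval. The paper avoids this by working with a level-set description: if $\partial V_0=\phi_0^{-1}(\{0\})$ with $\phi_0$ Lipschitz, then the transported function $\phi$ solving $(\partial_t+v\cdot\nabla)\phi=0$ (equivalently $\phi_t=\phi_0\circ X(t,\cdot)^{-1}$) satisfies $\partial V_t=\phi_t^{-1}(\{0\})$, and the standard propagation result for transport by log-Lipschitz fields (\cite[Th.~3.12]{BCD}) gives $\phi_t\in C^{0,\alpha_t}$ with $\alpha_t=\exp\bigl(-\int_0^t\|v\|_{LL}\,d\tau\bigr)$. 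This is cleaner and sidesteps the reparametrization issue entirely; you could patch your version the same way.
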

\begin{proof}
By using the continuity of Riesz operator, we get
$$\|v\|_{LL}\leq \|v\|_{2}+\|\div v\|_{\infty}+\|\curl v\|_{\infty},$$
 where $LL$ stands for the space of bounded log-Lipschitz functions.
 Hence Theorem \ref{thm:global2}
ensures that $v\in L^1_{loc}(\R_+;LL)$ and, applying  \cite[Th. 3.7]{BCD}
guarantees the existence and uniqueness of a generalized flow fulfilling \eqref{eq:col1}. 
Let us assume that  $\d V_0$ coincides with $\phi_0^{-1}(\{0\})$ for some 
Lipschitz function $\phi_0:\T^2\to\R.$ 
Then, $\d V_t=\phi_t^{-1}(\{0\})$ where $\phi$ solves  the transport equation 
$$
(\d_t+v\cdot\nabla)\phi=0.
$$
Now, \cite[Th. 3.12]{BCD} guarantees that $\phi_t$ has regularity $C^{0,\alpha_t}$ with 
$$\alpha_t:=\exp\biggl(-\int_0^t\|v\|_{LL}\,d\tau\biggr),$$
and one can conclude the proof of the first item.
\medbreak
The second item follows from similar arguments and  from the fact that
$$\rho(t,X(t,y))=\rho_0(y)\exp\biggl(\int_0^t\div v(\tau,X(\tau,y))\,d\tau\biggr)$$
for all $t\geq0$ and $y\in\T^2.$ 
\end{proof}

\medbreak
We end this  part   proposing some  conjecture, 
that probably requires  further developments of the transport theory, 
the basic problem here being  that the velocity field is not Lipschitz, thus preventing us to reformulate 
the equations in the Lagrangian coordinates without any loss of regularity:
\smallbreak \noindent{\it {\bf Conjecture.} The solutions constructed in Theorem \ref{thm:global2} 
(or in Theorem \ref{thm:global3} for the three-dimensional case)
 are unique for arbitrary  strictly increasing convex pressure functions.}

%It seems that nowadays techniques does not fit to the above problem, we are required
%to build a suitable new framework to work with the Lagrangian coordinates in the low
%regularity for this type of parabolic systems.

\medbreak
The rest of the paper unfolds as follows. 
The next section  is dedicated to the proof of regularity estimates for \eqref{CNS}  assuming 
that the solution under consideration is smooth with density bounded away from zero,
 and that $\nu$ is large enough 
(this corresponds to steps $1$ to $3$ above). 
%For better readability,  the most technical parts of that section  are postponed in appendix. 
In  Section \ref{s:existence}, we prove the existence part of our main theorem 
and also justify the convergence of \eqref{CNS} to \eqref{INS} for $\nu$ going to $\infty$, 
while Section \ref{s:uniqueness} is dedicated to uniqueness. 
% Finally, the proof of   the convergence for  $\nu\to+\infty$ is carried out in Section \ref{s:convergence}. 
Some technical results like, in particular, Inequality \eqref{eq:interpo-des} and time weighted estimates, 
and the case $d=3$   are presented in the appendix.

%%%%%%%%%%%%%%%%%%%%%%%%%%%%%%%%%%%%%%%%%%%%%%%%%%%%%

\section{Regularity estimates}\label{s:reg}

The present section is devoted to proving regularity estimates for the velocity 
field of a solution $(\rho,v)$ to \eqref{CNS} in $\R_+\times\T^d.$
 We focus on  $d=2,$ the three
  dimensional case being postponed in appendix. 
We show three results:   a control of the $H^1$ 
norm of the velocity, a pointwise global-in-time bound for the density and, finally,  a new
estimate for the effective viscous flux and the divergence-free part of the velocity. 
This latter estimate is based  on the \emph{shift of integrability  method}
introduced in \cite{DM-1}. 
%All together they close the regularity  bounds determined by Theorem \ref{thm:global2}.

\smallbreak
As a start, we  normalize the potential energy $e$ in such a way that  $e(1)=e'(1)=0,$  setting 
\begin{equation}\label{def:e} e(\rho):=\rho\int_{1}^\rho\frac{P(\varrho)}{\varrho^2}\,d\varrho-{P(1)}(\rho-1).
\end{equation}
Hence,  $\|e\|_1$ is essentially equivalent to $\|\rho-1\|_2^2$
and, in the case  $P(\rho)=\rho^\gamma,$  we have  
$$e(\rho)=\rho\log\rho+1-\rho\ \hbox{ if }\ \gamma=1,\quad\hbox{and }\ 
 e(\rho)= \frac{\rho^\gamma}{\gamma -1} - \frac{\gamma\rho}{\gamma-1} +1\ \hbox{ if }\ \gamma>1.$$
 We shall often use the notations $e$ and $P$
  instead of $e(\rho)$ and $P(\rho).$

\subsection{Sobolev estimates for the velocity}

Here  we  derive a global-in-time $H^1$ energy estimate \emph{that requires only a control on $\sup\rho$}. 
%, almost exact computations, like in the soliton theory.
%The main breakthrough here is that %,by tracking carefully   the dependency  of the estimatesbwith respect to $\nu,$ 
% we achieve  global-in-time bounds for large enough $\nu.$ 
 \smallbreak
Throughout the proof,  we  denote $\wt P:=P-\bar P$ and $\wt G:=G-\bar G$ where $\bar P$ and $\bar G$ stand for the average of $P$ and $G.$ Note that we have
  \begin{equation}\label{eq:wtG}
  \wt G=\nu\,\div v-\wt P.
  \end{equation}  
     \begin{prop} \label{p:H1a}
  Consider a smooth solution $(\rho,v)$ to \eqref{CNS} on $[0,T]\times\T^2$ satisfying 
  \eqref{eq:normalization}.
  Assume that the pressure law fulfills \eqref{eq:condP} 
  and that, for some positive constant  $\rho^*,$   
 \begin{equation}\label{eq:bounded}
 0 \leq \rho(t,x) \leq \rho^*  \mbox{ \ \ for all\ } (t,x) \in  [0,T]\times \T^2. 
\end{equation}
 Let $\dot v:=v_t+v\cdot \nabla v$ be the material derivative of $v,$  and $h:=\rho P'-P.$
 There exist:
 \begin{itemize}
 \item[--] a functional $\cE$ such that
 \begin{equation}\label{eq:equivE} \cE\geq  \frac12\int_{\T^2}\biggl(\rho|v|^2+\mu|\nabla \cP v|^2+\frac1{\nu}\bigl(\wt G^2+\wt P^2)+ 2e\biggr)\,dx,\end{equation}
 \item[--] an absolute positive constant $C$,
 \smallbreak\item[--] a  positive constant $\nu_0$ depending\footnote{Here we find
 $\nu_0=\max\Bigl(\mu,\,C\sqrt{\frac{\rho^*\log(\e+\rho^*)}\mu}\,P(\rho^*),\, \frac{P(\rho^*)}{2},\,
 4\sqrt{\rho^*(1+h(\rho^*))}\Bigr)\cdotp$}
only on the pressure function $P,$ on $\mu$  and  on $\rho^*,$
\end{itemize}
such that  if  $\nu\geq\nu_0$ then  for all $t\in[0,T],$ we have 
\begin{multline}\label{eq:H1}
1+\frac1{\mu E_0}\biggl(\cE(t)+\int_0^t\cD(\tau)\,d\tau\biggr)\hfill\cr\hfill
\leq \biggl(1+\frac{\cE_0}{\mu E_0}\exp\biggl\{C\Bigl( 1 +\frac{(\rho^*)^2}{\mu^4}E_0^2\log(\e+\rho^*)\Bigr)\biggr\}\biggl)^{\exp\bigl\{C      \frac{(\rho^*)^2}{\mu^4} E_0^2\bigr\}},
\end{multline}
  with $E_0$ defined in \eqref{eq:energy} and 
  $$ \cD:= \int_{\T^2}\biggl(\frac14\rho|\dot v|^2+\frac{\mu^2}{4\rho^*}|\nabla^2\cP v|^2+\frac1{8\rho^*}|\nabla G|^2
+\frac1{4\nu}\wt P^2+\Bigl(\frac{\nu\!+\!h}2\Bigr)(\div v)^2+\frac\mu2|\nabla v|^2\biggr)\,dx. $$ 
 \end{prop}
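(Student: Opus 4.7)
My starting point is the classical energy balance \eqref{eq:energy}, obtained by testing the momentum equation by $v$. It yields the $\tfrac12\rho|v|^2+e(\rho)$ part of the lower bound \eqref{eq:equivE} together with the $\tfrac\mu2|\nabla v|^2$ and (part of the) $\tfrac\nu2(\div v)^2$ dissipation in $\cD$, and controls $\sqrt\rho\,v$ in $L_\infty(L_2)$ and $\nabla\cP v,\sqrt\nu\,\div v$ in $L_2(L_2)$ by $E_0$. To reach $H^1$-regularity of $v$, the second decisive identity is obtained by testing the momentum equation against the material derivative $\dot v:=v_t+v\cdot\nabla v$. After integration by parts on the viscous terms, it reads schematically
\[\tfrac{d}{dt}\Bigl[\tfrac\mu2\|\nabla v\|_2^2+\tfrac{\lambda+\mu}2\|\div v\|_2^2-\int P\,\div v\,dx\Bigr]+\int\rho|\dot v|^2\,dx=\mbox{(nonlinear remainders)},\]
where the remainders are cubic commutators in $\nabla v$ produced by $\nabla\dot v$ and $\div\dot v$, plus pressure-induced nonlinear terms coming from the transport identity $P_t=-v\cdot\nabla P-(P+h)\div v$ (itself a direct consequence of the mass equation).

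\textbf{Rearrangement through $G$ and $\cP v$.} To bring the two identities to the form of the proposition, I would split $\mu\|\nabla v\|_2^2=\mu\|\nabla\cP v\|_2^2+\mu\|\div v\|_2^2$ by Helmholtz, and complete the square in $\nu\div v$ and $\wt P$: using $\nu\div v=\wt G+\wt P$, the quadratic combination arising from the acceleration identity becomes, up to harmless cross terms, $\tfrac1\nu(\wt G^2+\wt P^2)$. This is the structural reason why $G$ enters \eqref{eq:equivE}. Eliminating $\nabla P$ in the momentum equation via $\nabla G$ yields the clean decomposition
\[\rho\dot v=\mu\Delta\cP v+\nabla G,\]
from which elliptic regularity provides $\mu\|\nabla^2\cP v\|_2+\|\nabla G\|_2\leq C\sqrt{\rho^*}\,\|\sqrt\rho\,\dot v\|_2$, accounting for the corresponding entries of $\cD$ at the cost of using part of $\int\rho|\dot v|^2$ as reservoir. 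Inserting the transport identity for $P$ into the $\dot v$-identity generates the $\tfrac h2(\div v)^2$ contribution in $\cD$ and a damping proportional to $\|\wt P\|_2^2/\nu$, at the price of cross terms that are absorbed into $\cD$ as soon as $\nu\geq\nu_0$ (typically, $|\int\wt P\div v|\leq\tfrac\nu4\|\div v\|_2^2+\tfrac1\nu\|\wt P\|_2^2$, and $\int P(\div v)^2\leq P(\rho^*)\|\div v\|_2^2$). The functional $\cE$ is then defined as a suitable multiple of the energy plus the modified acceleration integral.

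\textbf{Nonlinear closure.} The worst remainders are of the form $\int\rho|v|^2|\nabla v|^2$, $\int|\nabla v|^3$ and $\int P\,|\nabla v|^2$. Applying Cauchy--Schwarz against $\|\sqrt\rho\,\dot v\|_2$ and $\|\nabla G\|_2+\|\nabla^2\cP v\|_2$ (both absorbable into $\cD$) leaves factors involving either $\|\sqrt\rho\,|v|\,|\nabla v|\|_2$ or $\|\nabla v\|_4^2$. The first would be treated by Ladyzhenskaya's $\|v\|_4^2\leq C\|v\|_2\|\nabla v\|_2$ if only $\|v\|_2$ were controlled---but it is not, since $\rho$ may vanish. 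This is the key step where the Desjardins logarithmic interpolation \eqref{eq:interpo-des} replaces Ladyzhenskaya, introducing the logarithm of $\cE$. Combined with Gagliardo--Nirenberg on $\|\nabla v\|_4$ routed through $\|\nabla^2\cP v\|_2+\|\nabla G\|_2$, the net outcome is a differential inequality
\[\tfrac{d}{dt}\cE+\cD\leq C\,\alpha(t)\,\cE\,\log\bigl(\e+\beta\,\cE\bigr),\]
where $\alpha\in L_1(\R_+)$ and $\beta$ depend only on $E_0$, $\rho^*$ and the other fixed parameters.

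\textbf{Log-Gronwall and final bound.} Integrating this logarithmic differential inequality produces the double-exponential estimate \eqref{eq:H1}, the inner exponential coming from the $\log$-Gronwall step and the outer one from keeping $\cE$ itself bounded. The hardest part is the nonlinear closure: one must keep every constant explicit in $\rho^*$, $\mu$, $\nu$ and the data, so that the condition $\nu\geq\nu_0$ with the stated $\nu_0$ is precisely enough to absorb all unwanted terms into $\cD$, while Desjardins' inequality---the only substitute for the missing $L_2$-control of $v$---reduces the cubic terms to the log-Gronwall-closable form.
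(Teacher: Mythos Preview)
Your sketch is essentially the paper's approach: test the momentum equation with $\dot v$, rewrite through $G$ and $\cP v$, use the elliptic identity $\mu\Delta\cP v+\nabla G=\rho\dot v$, close the worst nonlinear term with Desjardins' inequality, then apply a log-Gronwall. Two points of execution differ from the paper and are worth flagging. First, the paper avoids your cubic $\int|\nabla v|^3$ remainders altogether: after testing with $\dot v$, the convection piece $(\mu\Delta v+(\lambda+\mu)\nabla\div v-\nabla P)\cdot(v\cdot\nabla v)$ is rewritten as $\rho\dot v\cdot(v\cdot\nabla v)$ using the equation itself, so the only quadratic-in-$\nabla v$ term is $\|\sqrt\rho\,v\cdot\nabla v\|_2^2$, which is then split via $\nabla v=\nabla\cP v-\nu^{-1}\nabla^2(-\Delta)^{-1}(\wt G+\wt P)$; the $\wt P$-piece is handled with $\|\wt P\|_\infty$ only (no higher regularity), and that is where the $\nu^{-1}$ gain is decisive. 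Second, the damping $\tfrac1{4\nu}\|\wt P\|_2^2$ in $\cD$ does not come from inserting the transport identity for $P$ into the $\dot v$-identity; it comes from the separate potential-energy balance $\frac{d}{dt}\int e\,dx+\nu^{-1}\|\wt P\|_2^2=-\nu^{-1}\int\wt P\wt G\,dx$, which is why $\cE$ must include an extra multiple of $\|e\|_1$ (and a pressure-dependent correction) on top of the $\dot v$-energy and the basic energy. Otherwise your outline matches the paper's proof.
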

\begin{proof}  As  in the work of B. Desjardins in  \cite{Des-CPDE},  the proof consists in 
introducing  a suitable  `energy' functional that contains $H^1$ information on the velocity, then to combine with the logarithmic 
interpolation inequality \eqref{eq:interpo-des}.  The novelty here is that we succeed
in getting  a \emph{time-independent} control 
on the solution in terms of the data and of $\rho^*.$ 

\subsubsection*{Step 1} The  goal of this step (which is  independent of the dimension $d$) is to bound:
\begin{equation}\label{eq:wtE}
\wt\cE=\frac12\int_{\T^d}\biggl(\mu|\nabla \cP v|^2+\frac1{\nu}\biggl(\wt G^2+\wt P^2+\rho\int_\rho^1\frac{P^2(\tau)}{\tau^2}\,d\tau\biggr)\biggr)dx.
\end{equation}

To achieve it,  we take the $L_2$ scalar product 
 of the momentum equation of \eqref{CNS} with $\dot v,$ and get  
 \begin{multline}\label{eq:e1}
 \int_{\T^d}  \rho |\dot v|^2 \,dx + \frac{1}{2}\frac{d}{dt} \int_{\T^d} \bigl(\mu|\nabla v|^2 + (\lambda\!+\!\mu) (\div v)^2\bigr) \,dx\\ +
 \int_{\T^d} \nabla P \cdot v_t\, dx  = \int_{\T^d}(\rho\dot v)\cdot (v\cdot\nabla v)\,dx.
\end{multline}
To handle the pressure term in the left-hand side, we start from 
\begin{equation}\label{eq:P}
P_t+\div (Pv)+h\,\div v=0.
\end{equation}
Therefore, integrating by parts yields
$$ \int_{\T^d} \nabla P \cdot v_t \, dx= -\frac{d}{dt} \int_{\T^d} P\,\div v\, dx -\int_{\T^d} h\,(\div v)^2\,dx +\int_{\T^d}  P\,v\cdot\nabla \div v\, dx.$$
Since
$$
-(\nu\,\div v)^2=P^2-G^2-2\nu P\div v\andf \nu\nabla\div v=\nabla(P+G),$$
we get after integrating by parts once to avoid the appearance of some $\nabla P$ term, 
 \begin{multline}\label{eq:e1a}
  \int_{\T^d} \nabla P \cdot v_t \, dx
  =  -\frac{d}{dt} \int_{\T^d} P\,\div v\, dx +\frac1{\nu^2}\int_{\T^d} (P^2-G^2)h\,dx+\frac1\nu\int_{\T^d} Pv\cdot\nabla G\,dx\\
-\frac1\nu\int_{\T^d}\Bigl(\frac{P^2}2+2Ph\Bigr)\div v\,dx.\end{multline}
 Observing that, owing to the definition of $G$ and to \eqref{eq:P}, we have 
$$
\bar G=-\bar P\quad\hbox{and}\quad \bar P'=-\int_{\T^d} h\,\div v\,dx,
$$ 
we find that
\begin{eqnarray}
\ds\int_{\T^d}(P^2-G^2)h\,dx&&\!\!\!\!\!\!\!\!=\ds\nu\int_{\T^d} (\wt P-\wt G)\,\div v\,h\,dx+2\nu\bar P\int_{\T^d} h\,\div v\,dx\nonumber\\\label{eq:e1b}
\ds&&\!\!\!\!\!\!\!\!=\ds\nu^2\int_{\T^d} (\div v)^2h\,dx - 2\nu\int_{\T^d} \wt G\,\div v\,h\,dx-\nu\frac d{dt}(\bar P)^2.
\end{eqnarray}
  Let the function $k$ be the unique solution of 
    $$k-\rho k'=-\frac{P^2}2-2Ph \andf  k(1)=P^2(1).$$ 
% whence\footnote{ In the case $P(\rho)=\rho^{\gamma},$ function $k$ takes the  form 
 %$ k(\rho) = \frac{4\gamma -3}{4\gamma -2} \rho^{2\gamma} + \frac{1}{4\gamma -2} \rho.$}
 Then,  we have 
   \begin{equation}\label{eq:e1c}
  -\int_{\T^d}\Bigl(\frac{P^2}2+2Ph\Bigr)\div v\,dx=\int_{\T^d}\bigl(\d_t k+\div(kv)\bigr)dx
  =\frac d{dt}\int_{\T^d} k\,dx.
 \end{equation}
 Hence, plugging \eqref{eq:e1a} and \eqref{eq:e1b} in \eqref{eq:e1c}, we obtain
 $$\displaylines{\quad
 \int_{\T^d}\nabla P\cdot v_t\,dx=\frac d{dt}\int_{\T^d}\biggl(\frac{k-(\bar P)^2}\nu- P\div v\biggr)dx
 -\frac2\nu\int_{\T^d} \wt G\,\div v\,h\,dx\hfill\cr\hfill+\int_{\T^d} (\div v)^2h\,dx+\frac1\nu\int_{\T^d} Pv\cdot\nabla G\,dx.
 \quad}$$
Now, denoting 
\begin{equation}\label{eq:tE}
\check\cE:= \int_{\T^d} \biggl(\frac\mu2|\nabla v|^2 + \frac{\lambda+\mu}2(\div v)^2 +\frac1\nu \bigl(k-(\bar P)^2\bigr) -  \wt P\,\div v\biggr)dx
\end{equation}
and   reverting to  \eqref{eq:e1}, we conclude that 
\begin{multline}\label{eq:e2}
 \frac{d}{dt}\check\cE
 + \int_{\T^d} \rho |\dot v|^2 \,dx+\int_{\T^d}(\div v)^2h\,dx\\
 =\int_{\T^d}(\rho\dot v)\cdot(v\cdot\nabla v)\,dx
 +\frac2{\nu}\int_{\T^d} \wt G\,\div v\,h\,dx-\frac1\nu\int_{\T^d} Pv\cdot\nabla G\,dx.
\end{multline}
We claim that   $\check\cE=\wt\cE.$ Indeed, we have
$$\check\cE=\int_{\T^d} \biggl(\frac\mu2\bigl(|\nabla v|^2-(\div v)^2\bigr)+\frac1{2\nu}\Bigl(
(\nu\div v)^2-2\nu\div v\,\wt P+2(k-(\bar P)^2)\Bigr)\biggr)dx$$
and 
 \begin{equation}\label{eq:k}
 k(\rho)= P^2(\rho)-\frac\rho2\int_{1}^\rho\frac{P^2(\varrho)}{\varrho^2}\,d\varrho.
  \end{equation}
%\begin{equation}\label{eq:wtE}
%\wt\cE=\frac12\int_{\T^d}\biggl(\mu|\nabla \cP v|^2+\frac1{\nu}\biggl(\wt G^2+\wt P^2+\rho\int_\rho^1\frac{P^2(\tau)}{\tau^2}\,d\tau\biggr)\biggr)dx.
%\end{equation}
In order to get a control on the right-hand side of \eqref{eq:e2}, let us rewrite 
the momentum equation  in terms of the viscous effective flux $G=\nu\div v- P$ as follows:
\begin{equation}\label{eq:stokes}
\mu\bigl(\Delta v  -\nabla \div v\bigr) + \nabla G =\rho \dot v.
\end{equation}
{}From it,  we discover that
\begin{equation}\label{eq:e3}
\mu^2\|\Delta\cP v\|_{2}^2+\|\nabla G\|_{2}^2=\| \rho\dot v\|_{2}^2\leq\rho^*\|\sqrt\rho\dot v\|_{2}^2.
\end{equation}
Since we obviously have 
 $$
  \frac2\nu\int_{\T^d}\wt G\,\div v\,h\,dx\leq \frac12\int_{\T^d} (\div v)^2h\,dx+\frac2{\nu^2}\int_{\T^d}\wt G^2h\,dx,
  $$
 equality  \eqref{eq:e2} and the fact that $\check\cE=\wt\cE$  imply  that
$$\displaylines{\quad
 \frac{d}{dt}\wt\cE+\frac14\|\sqrt\rho\,\dot v\|_{2}^2
 + \frac{\mu^2}{2\rho^*}\|\Delta\cP v\|_{2}^2+\frac1{2\rho^*}\|\nabla G\|_{2}^2 + \frac12\int_{\T^d}(\div v)^2h\,dx\hfill\cr\hfill
 \leq \frac2{\nu^2}\int_{\T^d} \wt G^2\,h\,dx-\frac1\nu\int_{\T^d} Pv\cdot\nabla G\,dx + \|\sqrt\rho v\cdot\nabla v\|_{2}^2.\quad}$$
To bound the last term in the right-hand side,  we decompose $\nabla v$ into 
\begin{equation}\label{eq:e2c}\nabla v=\nabla\cP v-\frac1\nu\nabla^2(-\Delta)^{-1}\wt G-\frac1\nu\nabla^2(-\Delta)^{-1}\wt P.\end{equation}
Hence
\begin{multline}\label{eq:e2b}
 \frac{d}{dt}\wt\cE+\frac14\|\sqrt\rho\,\dot v\|_{2}^2
 + \frac{\mu^2}{2\rho^*}\|\Delta\cP v\|_{2}^2+\frac1{2\rho^*}\|\nabla G\|_{2}^2 + \frac12\int_{\T^d}(\div v)^2h\,dx\leq
  \frac2{\nu^2}\int_{\T^d} \wt G^2\,h\,dx\\-\frac1\nu\int_{\T^d} Pv\cdot\nabla G\,dx
+  3\Bigl(\|\sqrt\rho\, v\cdot\nabla\cP v\|_2^2
+\frac1{\nu^2}\|\sqrt\rho\,v\cdot\nabla^2(-\Delta)^{-1}\wt G\|_2^2
+\frac1{\nu^2}\|\sqrt\rho\,v\cdot\nabla^2(-\Delta)^{-1}\wt P\|_2^2\Bigr)\cdotp
\end{multline}

\subsubsection*{Step 2: Bounding the right-hand side of \eqref{eq:e2b} in dimension  $d=2$}

 H\"older and Gagliardo-Nirenberg inequalities yield
$$
\Int_{\T^2} \rho|v\cdot\nabla \cP v|^2\,dx\leq C\sqrt{\rho^*}
\biggl(\int_{\T^2} \rho|v|^4\,dx\biggr)^{\frac12}\|\nabla\cP v\|_2\|\nabla^2\cP v\|_2.$$
Since  the density is not bounded from below,  in order to bound the right-hand side, one has
to   take advantage of    Inequality \eqref{eq:interpo-des}. We get 
 \begin{eqnarray}\label{eq:e14a}
3\!\Int_{\T^2} \!\rho|v\!\cdot\!\nabla \cP v|^2dx &&\!\!\!\!\!\!\!\!\!\leq\! 
 C \sqrt{\rho^*}\,\|\sqrt\rho v\|_{2}\|\nabla v\|_{2}\|\nabla\cP v\|_{2}\|\nabla^2\cP v\|_{2}
\log^{\frac12}\biggl(\e\!+\!\|\wt\rho\|_{2}\!+\!\frac{\|\rho\|_{2}\|\nabla v\|_{2}^2}{\|\sqrt \rho v\|_{2}}\biggr)\nonumber\\&&\!\!\!\!\!\!\!\!\!\leq\!\Frac{\mu^2}{4\rho^*}\|\Delta\cP v\|_{2}^2 
+\! \frac{C(\rho^*)^2}{\mu^2} \|\sqrt\rho v\|_{2}^2\|\nabla v\|_{2}^2\|\nabla \cP v\|_{2}^2
\log\biggl(\e\!+\!\|\wt\rho\|_{2}\!+\!\frac{\|\rho\|_{2}\|\nabla v\|_{2}^2}{\|\sqrt \rho v\|_{2}^2}\biggr)\cdotp
\end{eqnarray}
Arguing similarly and using the fact that $\nabla^2(-\Delta)^{-1}$ maps $L_4(\T^2)$
to itself, we get
\begin{multline}\label{eq:e14b}
\frac{3}{\nu^2}\Int_{\T^2}\rho\, \Bigl|\,v\cdot\left[ \nabla^2(-\Delta)^{-1}\wt G\right]\Bigr|^2\,dx 
\leq\Frac{1}{8\rho^*}\|\nabla G\|_{2}^2 \\+ \frac {C(\rho^*)^2}{\nu^4} \|\sqrt\rho v\|_{2}^2\|\nabla v\|_{2}^2\|\wt G\|_{2}^2\log\biggl(\e\!+\!\|\wt\rho\|_{2}\!+\!\frac{\|\rho\|_{2}\|\nabla v\|_{2}^2}{\|\sqrt \rho v\|_{2}^2}\biggr),
\end{multline}
and also,
\begin{multline}\label{eq:e4c}
\frac{3}{\nu^2}\Int_{\T^2}\rho |v\cdot\nabla^2\Delta^{-1}\wt P|^2\,dx
 %&&\!\!\!\!\!\!\!\!\!\leq C\Frac{\sqrt{\rho^*}}{\nu^2}\biggl(\int_{\T^2} \rho|v|^4\,dx\biggr)^{\frac12}\|\wt P\|_{4}^2\nonumber\\
\\\leq \frac1{4\nu}\|\wt P\|_2^2 +\Frac{C\rho^*}{\nu^3}\|\sqrt \rho\,v\|_2^2\|\nabla v\|_2^2
\log\biggl(\e\!+\!\|\wt\rho\|_{2}\!+\!\frac{\|\rho\|_{2}\|\nabla v\|_{2}^2}{\|\sqrt \rho v\|_{2}^2}\biggr)\|\wt P\|_\infty^2.
\end{multline}
Finally, we have,  thanks to  Inequality \eqref{eq:limitpoincare},
%denoting by $\bar v$ the average of $v$ and $\wt v:=v-\bar v,$
$$\begin{aligned}
-\frac1\nu\int_{\T^2} Pv\cdot\nabla G\,dx&\leq \frac1\nu P^*\|v\|_{2}\|\nabla G\|_{2}\\
&\leq \frac C\nu P^*\log^{\frac12}(\e+\|\wt\rho\|_2)\,\|\nabla v\|_2\|\nabla G\|_2.
\end{aligned}$$
Hence 
\begin{equation}\label{eq:e4e}
-\frac1\nu\int_{\T^2} Pv\cdot\nabla G\,dx\leq\frac1{8\rho^*}\|\nabla G\|_{2}^2
+C\frac{\rho^*}{\nu^2}(P^*)^2\|\nabla v\|_{2}^2\log\bigl(\e+\|\wt\rho\|_{2}\bigr)\cdotp
\end{equation}
Therefore, plugging \eqref{eq:e14a}, \eqref{eq:e14b}, \eqref{eq:e4c} %\eqref{eq:e4d}
 and \eqref{eq:e4e} in \eqref{eq:e2b},  we conclude that
\begin{multline}\label{eq:e8}
 \frac{d}{dt}\wt\cE+\frac14\|\sqrt\rho\,\dot v\|_{2}^2
 + \frac{\mu^2}{4\rho^*}\|\Delta\cP v\|_{2}^2+\frac1{4\rho^*}\|\nabla G\|_{2}^2+\frac12\int_{\T^2}(\div v)^2h\,dx
\\ \leq\frac{C\rho^*}{\nu^2}\log\bigl(\e+\|\wt\rho\|_{2}\bigr)(P^*)^2\|\nabla v\|_{2}^2 +\frac1{4\nu}\|\wt P\|_2^2 +\frac2{\nu^2}\int_{\T^2}\wt G^2\,h\,dx\\
  +C\|\sqrt\rho v\|_{2}^2\|\nabla v\|_2^2\biggl(\frac{\rho^*\|\wt P\|_\infty^2}{\nu^3}+\frac{(\rho^*)^2}{\mu^2}\|\nabla \cP v\|_2^2
+\frac{(\rho^*)^2}{\nu^4}\|\wt G\|_{2}^2\biggr) 
\log\biggl(\e+\|\wt\rho\|_{2}+\frac{\|\rho\|_{2}\|\nabla v\|_{2}^2}{\|\sqrt \rho v\|_{2}^2}\biggr)\cdotp
\end{multline}

\subsubsection*{Step 3: Upgrading the energy functional $\wt\cE$}

In order to handle all the terms of the right-hand side of \eqref{eq:e8}, one has to  
add up to $\wt\cE$ a  suitable multiple of the basic energy $E$ and  of the potential energy  $e$
so as to  glean some time-decay for $\|\wt P\|_{2}.$ 
Indeed,  we have 
 $$ \d_te+\div(ev)+P\div v=0.$$
Hence, integrating on $\T^2$ and remembering that $\nu\,\div v=\wt P+\wt G$ yields
  \begin{equation}\label{eq:e13b}
 \frac d{dt}\int_{\T^2}e\,dx+\frac1\nu\int_{\T^2}|\wt P|^2\,dx=-\frac1\nu\int_{\T^2}\wt P\wt G\,dx.
 \end{equation}
Now, denoting $P^*:=\|P(\rho)\|_\infty,$  we observe that for all $\rho\geq0,$ we have 
$$\begin{aligned}
\rho\int_1^{\rho}\frac{P^2(\tau)}{\tau^2}\,d\tau\leq \rho P(\rho)\int_1^\rho  \frac{P(\tau)}{\tau^2}\,d\tau&=
P(\rho)\bigl(e(\rho) + P(1)(\rho-1)\bigr)\\&\leq P^*\bigl(e(\rho) + P(1)\rho\bigr) - P(\rho) P(1).
\end{aligned}$$
Hence, owing to \eqref{eq:normalization},  
\begin{equation}\label{eq:boundg}
\int_{\T^2} \rho(x)\biggl(\int_1^{\rho(x)}\frac{P^2(\tau)}{\tau^2}\,d\tau\biggr)dx\leq P^* (\|e\|_1+P(1)) - \bar P P(1),
\end{equation}
and thus 
\begin{equation}\label{eq:wtcE}\wt\cE\geq \frac12\int_{\T^2}\biggl(\mu|\nabla \cP v|^2+\frac1{\nu}\Bigl(\wt G^2+\wt P^2\Bigr)\biggr)\,dx
-\frac1{2\nu}\biggl(P^*\|e\|_1 + P(1) \bigl(P^*-\bar P\bigr)\!\biggr)\cdotp
\end{equation}
Consequently, if we  set
 $$\begin{aligned} \cE:&=\wt\cE +E+\|e\|_1 + \frac1{2\nu}\bigl(P^*-P(1)\bigr)P(1)\\
 &=\frac12\!\int_{\T^2}\!\biggl(\rho|v|^2\!+\!\mu|\nabla \cP v|^2\!+\!\frac1{\nu}\biggl(\wt G^2\!+\!
 \wt P^2\!+\!\biggl(\!\rho\int_\rho^1\!\frac{P^2(\tau)}{\tau^2}d\tau\!\biggr)\!+\!(P^*\!-\!P(1))P(1)\!\biggr)
 +4e\!\biggr)dx,\end{aligned}$$ then we have  thanks  to \eqref{eq:wtcE}, 
  \begin{equation}\label{eq:E}\cE\geq \frac12\biggl(\|\sqrt\rho v\|_{2}^2+\mu\|\nabla \cP v\|_{2}^2
  +\frac1{\nu}\bigl(\|\wt G\|_{2}^2+\|\wt P\|_2^2\bigr)\biggr) + \biggl(2-\frac{P^*}{2\nu}\biggr)\|e\|_{1}\biggr)\cdotp\end{equation}
  
  \subsubsection*{Step 4. A global-in-time estimate} 
  In order to control the integral  in the right-hand side of  \eqref{eq:e13b},  one may use that  
   $$
  \frac 1\nu\int_{\T^2}|\wt P|\,|\wt G|\,dx\leq\frac 1{2\nu}\int_{\T^2}\wt P^2\,dx + \frac 1{2\nu}\int_{\T^2}\wt G^2\,dx.
  $$
  Then, Poincar\'e inequality  implies that 
  $$
  \frac 1\nu\int_{\T^2}\Bigl(\frac2\nu h+1\Bigr)\wt G^2\,dx\leq \frac{4\rho^*}\nu\biggl(\frac2\nu\|h\|_\infty+1\biggr)\frac{\|\nabla G\|_2^2}{4\rho^*}\cdotp$$
  Using also the fact that  $\|\wt\rho\|_2^2=\|\rho\|_2^2-1\leq(\rho^*)^2-1,$  we get from \eqref{eq:e8} that
  $$\displaylines{
  \frac d{dt}\cE+\frac14\|\sqrt\rho\,\dot v\|_{2}^2+\frac{\mu^2}{4\rho^*}\|\nabla^2\cP v\|_{2}^2
  +\frac1{4\rho^*}\biggl(1- \frac{4\rho^*}\nu\biggl(\frac2\nu\|h\|_\infty+1\biggr)\!\biggr)\|\nabla G\|_{2}^2
+\frac{1}{4\nu}\|\wt P\|_2^2\hfill\cr\hfill+\!\int_{\T^2}\! \Bigr(\nu+\frac h2\Bigl)(\div v)^2dx+\mu\|\nabla\cP v\|_{2}^2-C\frac{\rho^*\log(\e+\rho^*)}{\nu^2}(P^*)^2\|\nabla v\|_{2}^2\hfill\cr\hfill\leq 
C\|\sqrt\rho v\|_{2}^2\|\nabla v\|_2^2\biggl(\frac{\rho^*\|\wt P\|_\infty^2}{\nu^3}+\frac{(\rho^*)^2}{\mu^2}\|\nabla \cP v\|_2^2
+\frac{(\rho^*)^2}{\nu^4}\|\wt G\|_{2}^2\biggr) 
\log\biggl(\e+ \rho^*+\frac{\rho^*\|\nabla v\|_{2}^2}{\|\sqrt \rho v\|_{2}^2}\biggr)\cdotp}
 $$ 
Now, since
\begin{equation}\label{eq:elliptic}
\mu\|\nabla v\|^2_{2} + (\lambda+\mu) \|\div v \|_{2}^2=\mu\|\nabla\cP v\|_{2}^2
+\nu\|\div v\|_{2}^2,
\end{equation}
we have  if  $\nu\geq\mu,$  
$$ \nu\|\div v\|_2^2+\mu\|\nabla\cP v\|_2^2\geq\mu\|\nabla v\|_2^2.$$
Therefore, because  for all $A\geq0,$ 
$$\log(\e+\rho^*+\rho^*A)\leq \log(\e+\rho^*)+\log(1+A)\leq\log(\e+\rho^*)+A,$$  if one assumes  that 
 \begin{equation}\label{eq:condnu1}
 1\geq 2C \frac{\rho^*\log(\e+\rho^*)(P^*)^2}{\mu\nu^2}\andf   
  \frac{8\rho^*}\nu\biggl(\frac2\nu\|h\|_\infty+1\biggr)\leq1,
  \end{equation}
then the above  inequalities imply that 
   \begin{multline}\label{eq:e9}
  \frac d{dt}\cE+\cD\leq C\rho^*\|\sqrt\rho v\|_{2}^2\|\nabla v\|_2^2\frac{\|\wt P\|_\infty^2}{\nu^3}\biggl(\log(\e+\rho^*)+\frac{\|\nabla v\|_{2}^2}{\|\sqrt \rho v\|_{2}^2}\biggr)\\+ C(\rho^*)^2\|\sqrt\rho v\|_{2}^2\|\nabla v\|_2^2\biggl(\frac{1}{\mu^2}\|\nabla \cP v\|_2^2
+\frac{1}{\nu^4}\|\wt G\|_{2}^2\biggr)\biggl(\log(\e+\rho^*)+\log\biggl(1+\frac{\|\nabla v\|_{2}^2}{\|\sqrt \rho v\|_{2}^2}\biggr)\biggr)
\end{multline} 
with 
$$\cD:= \frac14\|\sqrt\rho\,\dot v\|_{2}^2+\frac{\mu^2}{4\rho^*}\|\nabla^2\cP v\|_{2}^2+\frac1{8\rho^*}\|\nabla G\|_{2}^2
+\frac{1}{4\nu}\|\wt P\|_2^2+\frac12\int_{\T^2}(\div v)^2(\nu+h)\,dx+\frac{\mu}2\|\nabla v\|_{2}^2.$$
So, finally, if one assumes  that 
\begin{equation}\label{eq:largenu}
\nu\geq\mu,\quad \nu^2\geq {2C}\mu^{-1}\rho^*\log(\e\!+\!\rho^*) (P^*)^2,\quad \nu \geq 8\rho^*(2\nu^{-1}\|h\|_\infty+1) 
\andf \nu\geq P^*/2,
\end{equation}
the last condition ensuring that the coefficient of the last term in \eqref{eq:E} is greater than $1,$
then  \eqref{eq:equivE} holds true, and thus
\begin{equation}\label{eq:star}
\|\sqrt\rho v\|_{2}^2\leq2\cE,\qquad \|\nabla\cP v\|_2^2\leq 2\cE/\mu\andf  \|\wt P\|_2^2+\|\wt G\|_2^2\leq2\nu\cE.\end{equation}
Thanks to that, Inequality \eqref{eq:e9} combined with the energy balance \eqref{eq:energy} 
and the fact that the map $r\mapsto r\log^{1/2}(a+b/r)$ is nondecreasing on $\R_+$ if $a\geq1$ and $b\geq0,$ imply that
$$\displaylines{\quad
\frac d{dt}\cE+ \cD \leq C\frac{(\rho^*)^2}{\mu^3}E_0\|\nabla v\|_2^2\,\cE \, \log\biggl(1+\frac\cE{\mu E_0}\biggr)\hfill\cr\hfill
+C\biggl(\frac{(\rho^*)^2}{\mu^3}E_0\log(\e+\rho^*)+\frac{\rho^*\|\wt P\|_\infty^2}{\mu\nu^3}
+\log(\e+\rho^*)\frac{\rho^*\|\wt P\|_\infty^2}{\nu^3}\biggr)\|\nabla v\|_2^2\,\cE.}
$$
Note that Condition \eqref{eq:largenu} entails that  
$$\frac{\rho^*\|\wt P\|_\infty^2}{\mu\nu^3}
+\log(\e+\rho^*)\frac{\rho^*\|\wt P\|_\infty^2}{\nu^3}\leq1.$$
Therefore applying  Lemma \ref{l:osgood} with 
$$
A:=1,\quad B:=\frac{1}{\mu E_0},\quad
f:=C\frac{(\rho^*)^2}{\mu^3} E_0 \|\nabla v\|_2^2
\andf  g:=C\Bigl( 1 +\frac{(\rho^*)^2}{\mu^3}E_0\log(\e+\rho^*)\Bigr)\|\nabla v\|_2^2,
$$
we get 
$$\displaylines{
1+\frac1{\mu E_0}\biggl(\cE(t)+\int_0^t\cD(\tau)\,d\tau\biggr)\hfill\cr\hfill
\leq \biggl(1+\frac{\cE_0}{\mu E_0}\exp\biggl\{C\Bigl( 1 +\frac{(\rho^*)^2}{\mu^3}E_0\log(\e+\rho^*)\Bigr)\int_0^t\|\nabla v\|_2^2\,d\tau\biggr\}\biggl)^{\exp\bigl\{C      \frac{(\rho^*)^2}{\mu^3} E_0\int_0^t\|\nabla v\|_2^2\,d\tau\bigr\}},}
$$
which, in light of  the basic energy balance \eqref{eq:energy}, yields  \eqref{eq:H1}.
\end{proof}
 \begin{rmk} One has some freedom
 in the definition of $\cE,$   and  lots of possibilities  for bounding  the right-hand side of \eqref{eq:e8}. 
 As a consequence, for small $\nu,$   one can  get  a global, but time dependent 
 control on $\cE.$ 
 We chose not to treat that case here since the condition that $\nu$ is large 
 will be needed 
 in the next step, so as to remove the a priori assumption that $\rho$ is bounded. 
  \end{rmk}
   \begin{rmk}   Relation  \eqref{eq:wtG} and Inequality  
   %$$   (\nu\, \div v)^2\leq 2\wt G^2+2\wt P^2.$$
   \eqref{eq:equivE}  imply  that 
   $$\nu\|\div v\|_{2}^2\leq 4\cE.$$\end{rmk}
  
  %{\color{blue}In order to understand better the dependence from $\nu$ in the estimate given by   
%Proposition \ref{p:H1a} we restate (\ref{eq:H1}) in terms of $\|\rho_0\|_\infty$
%and $\|v_0\|_{H^1}$ remembering that we have the energy estimate (\ref{eq:IE}).
%\begin{col}\label{col-2}
 %Assume conditions of Proposition \ref{p:H1a}, then bound (\ref{eq:H1}) implies
 %\begin{multline}\label{ineq:2}
  %\sup_{t\leq T} \big( \|\nabla \cP v\|^2_2+\nu \|\div v\|^2_2\big) +
  %\int_0^T \int_{\T^2} \big(\rho |\dot v|^2 + |\nabla^2 \cP v|^2 + |\nabla G|^2)\,dx\,dt\\ \leq 
  %(1+\nu)C_0(\|\rho_0\|_\infty,\|v_0\|_{H^1}),
% \end{multline}
%where the RHS is independent from $T$.
%\end{col}}

 %%%%%%%%%%%%%%%%%%%%%%%

\subsection{An upper bound for the density}

 Here, we prove that, for large enough $\nu,$  if the initial data fulfill the assumptions of the previous section,  then we have a global-in-time control on the supremum of $\rho.$ 
 As in the previous subsection, we assume that we are given a smooth 
 solution with strictly positive density,   keeping in mind that the  result below 
 will be only applied to the family constructed in Section \ref{s:existence}. 
  For simplicity, we assume that  $P(\rho)=\rho^\gamma$ for 
 some $\gamma\geq1.$
  
  \begin{prop}\label{p:boundrho2}    
  Consider a  smooth solution 
 $(\rho,v)$ of  \eqref{CNS}  on  $[0,T]\times\T^2$ for some  $T<\infty$
 pertaining to  smooth  initial data $(\rho_0,v_0)$ such that $\rho_0>0$ 
 and  $\nu^{1/2}\|\div v_0\|_2\leq K.$ 
 
 There exists $\nu_0$ depending  on $K,$ $\gamma,$ $\mu,$
 $\|\rho_0\|_\infty,$  $E_0$ and $\|\nabla v_0\|_2,$ \underline{but independent of $T$} such that 
 if $\nu\geq\nu_0,$ then
  \begin{equation}\label{eq:rhomax}
     \sup_{t\in [0,T]} \|\rho(t)\|_\infty \leq 2\,\e^{\frac{\gamma-1}\gamma E_0}
  \|\rho_0\|_\infty.
  \end{equation} 
  \end{prop}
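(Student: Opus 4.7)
The strategy is to adapt the damped-mode method of Desjardins \cite{Des-CPDE}, replacing their local-in-time conclusion by a \emph{global} one thanks to Proposition~\ref{p:H1a}. Introduce
$$Q:=(-\Delta)^{-1}\div(\rho v)\qquad\hbox{and}\qquad F:=\log\rho-\nu^{-1}Q,$$
where $(-\Delta)^{-1}$ acts on mean-zero functions (note that $\div(\rho v)$ has zero integral on $\T^2$). Combining $D_t\log\rho=-\div v=-\nu^{-1}(\wt G+\wt P)$ with the divergence of the momentum equation and the identity $(-\Delta)^{-1}\Delta=-I$ on mean-zero functions, a direct computation yields
$$D_tF+\nu^{-1}\wt P=\nu^{-1}\cN,\qquad D_t:=\d_t+v\cdot\nabla,$$
with $\cN:=(-\Delta)^{-1}\div\div(\rho v\otimes v)-v\cdot\nabla Q$. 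The crucial algebraic observation is that $\cN$ has commutator structure, namely $\cN=[R_{ij},v_i](\rho v_j)$ with $R_{ij}:=(-\Delta)^{-1}\d_i\d_j$; this supplies an extra derivative on $v$ that is essential since Calder\'on--Zygmund operators do not preserve $L_\infty$. Writing $P=\rho^\gamma=\e^{\gamma F+\gamma Q/\nu}$, we arrive at
$$D_tF+\nu^{-1}\e^{\gamma F}\e^{\gamma Q/\nu}=\nu^{-1}\bar P+\nu^{-1}\cN,$$
a transport-reaction equation whose strong exponential damping drives $F$ back towards $\tfrac1\gamma\log\bar P$.

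Next I set up a bootstrap. Let $M^*:=2\e^{(\gamma-1)E_0/\gamma}\|\rho_0\|_\infty$ and let $T^*\in[0,T]$ be maximal such that $\sup_{[0,T^*]}\|\rho\|_\infty\leq M^*$. On $[0,T^*]$ I apply Proposition~\ref{p:H1a} with $\rho^*=M^*$; the hypothesis $\nu^{1/2}\|\div v_0\|_2\leq K$ bounds $\cE_0$ in terms of the data, and for $\nu$ above a threshold depending only on $M^*,\mu,\gamma,K,\|\nabla v_0\|_2$, this yields a time-independent control on $\cE$ and on $\int_0^t\cD$. From these, via Sobolev embedding and the logarithmic Poincar\'e inequality, I extract a bound $\|Q\|_{L_\infty([0,T^*]\times\T^2)}\leq C_0$. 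Exploiting both the commutator identity for $\cN$ and the regularity $\nabla v\in L_2(0,T^*;BMO)$ that comes from the $L_2$-in-time control of $\nabla G$ and $\nabla\cP v$, I also obtain
$$\|\cN\|_{L_1(0,T^*;L_\infty(\T^2))}\leq C_1,$$
with $C_0,C_1$ depending only on the data and on $M^*$.

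The closing step is a pointwise argument along the flow $X(t,y)$ of $v$. Setting $\psi(t):=F(t,X(t,y))$ and using the exponential damping, an elementary barrier comparison absorbing the $L_1$-in-time excursions of $\|\cN\|_{L_\infty}$ yields
$$\sup_{t\in[0,T^*]}\psi(t)\leq\max\Bigl(\log\|\rho_0\|_\infty,\ \tfrac1\gamma\log\|\bar P\|_{L_\infty(0,T^*)}\Bigr)+\nu^{-1}(2C_0+C_1).$$
Jensen's inequality and the energy identity give $\bar P(t)=\int_{\T^2}\rho^\gamma\leq 1+(\gamma-1)E_0$, hence $\tfrac1\gamma\log\|\bar P\|_{L_\infty}\leq(\gamma-1)E_0/\gamma$. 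Reverting to $\rho=\e^{\psi+Q/\nu}$ and using $\|\rho_0\|_\infty\geq 1$ (from \eqref{eq:normalization}), we deduce
$$\|\rho(t)\|_\infty\leq\e^{(2C_0+C_1)/\nu}\cdot\e^{(\gamma-1)E_0/\gamma}\|\rho_0\|_\infty.$$
For $\nu$ large enough the first prefactor is strictly less than $2$, contradicting the maximality of $T^*$ unless $T^*=T$, which closes the bootstrap and proves \eqref{eq:rhomax}.

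The main obstacle I expect is the bound on $\cN$ in $L_1(L_\infty)$: the operator $(-\Delta)^{-1}\d_i\d_j$ fails to preserve $L_\infty$, so Calder\'on--Zygmund theory alone cannot suffice. The rescue is the commutator identity $\cN=[R_{ij},v_i](\rho v_j)$, whose analysis trades the $L_\infty$-failure for an extra derivative of $v$, supplied by the $L_2(BMO)$ regularity inherited from Proposition~\ref{p:H1a}, with the inevitable logarithmic losses absorbed by the largeness of $\nu$.
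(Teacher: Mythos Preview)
Your overall architecture---introduce $F=\log\rho-\nu^{-1}Q$, recognise the commutator structure $\cN=[R_{ij},v^i](\rho v^j)$, run a bootstrap on $\sup\|\rho\|_\infty$ and feed it with Proposition~\ref{p:H1a}---is exactly the paper's. The gap lies in the step where you assert a \emph{time-independent} bound $\|\cN\|_{L_1(0,T^*;L_\infty)}\leq C_1$ and then close by a barrier comparison. The regularity delivered by Proposition~\ref{p:H1a} (after a CLMS-type estimate, namely $\|\cN\|_\infty\lesssim\|\cN\|_{W^{1,3}}\lesssim\|\nabla v\|_{12}\|\rho v\|_4$) yields only $\|\cN(\tau)\|_\infty\lesssim\cD^{1/2}(\tau)\,\|\nabla v(\tau)\|_2^{1/2}$ up to constants; since $\cD^{1/2}\in L_2(\R_+)$ and $\|\nabla v\|_2^{1/2}\in L_4(\R_+)$, one gets $\cN\in L_{4/3}(\R_+;L_\infty)$, which does \emph{not} embed into $L_1(\R_+;L_\infty)$. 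Hence $C_1$ in your argument necessarily grows with $T^*$, and your threshold $\nu_0$ would depend on $T$, contradicting the statement. (Incidentally, the regularity ``$\nabla v\in L_2(BMO)$'' you invoke is not the right input for an $L_\infty$ bound on the commutator; Calder\'on--Zygmund fails on $L_\infty$, and the route above via $W^{1,3}$ is what actually works.)

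The paper's remedy is to \emph{linearize the damping before integrating}. Using $P(\rho)\geq\gamma\log\rho+1$ one obtains, for $F^+:=\max(0,F)$, the transport inequality $D_tF^++\gamma\nu^{-1}F^+\leq\nu^{-1}|\cN|+\ldots$, whose maximum principle produces the explicit kernel $e^{-\gamma(t-\tau)/\nu}$. The crucial point is that this kernel lies in $L_4(0,t)$ with norm $\lesssim(\nu/\gamma)^{1/4}$, so by H\"older with exponents $(4,2,4)$,
$$\frac1\nu\int_0^t e^{-\frac\gamma\nu(t-\tau)}\,\cD^{1/2}\,\|\nabla v\|_2^{1/2}\,d\tau
\ \lesssim\ \nu^{-3/4}\biggl(\int_0^\infty\cD\,d\tau\biggr)^{1/2}\biggl(\int_0^\infty\|\nabla v\|_2^2\,d\tau\biggr)^{1/4},$$
which is a negative power of $\nu$ times a constant depending only on the data. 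This is precisely what converts the merely $L_{4/3}$-in-time commutator into a uniformly small contribution. Your nonlinear barrier can be repaired the same way (once $\psi$ exceeds its target level, $e^{\gamma\psi}\geq\bar P+\gamma\bar P(\psi-\gamma^{-1}\log\bar P)$ linearizes the damping and reinstates the exponential weight), but as written the ``$L_1$-excursion'' step does not close time-uniformly.
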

\begin{proof} 
Throughout the proof, we denote slightly abusively the right-hand side of \eqref{eq:rhomax}
by $\rho^*.$
 We start from the observation that as $\rho$ is smooth and positive (by assumption), 
 we may write, owing to \eqref{eq:wtG},  
  $$
 \d_t\log\rho+ v\cdot\nabla\log\rho=-\frac1\nu\bigl(\wt P+\wt G).
 $$
 Remember that the definition of $\wt G$ ensures that
 $$
 \Delta\wt G=\d_t(\div(\rho v))+\div(\div(\rho v\otimes v)).
 $$
 Therefore, following \cite{Des-CPDE} and introducing
 $$
 F:=\log\rho-\nu^{-1}(-\Delta)^{-1}\div(\rho v),
 $$
 we discover that (with the summation convention 
 over repeated indices), 
 \begin{equation}\label{eq:F}
 \d_tF+v\cdot\nabla F+\frac1\nu\wt P=-\frac1\nu[v^j,(-\Delta)^{-1}\d_i\d_j]\rho v^i.
 \end{equation}
 Since we have
 $$
 P(\rho)\geq \gamma\log\rho +1\quad\hbox{for all }\ \rho>0,
 $$ setting $F^+:=\max(0,F)$ yields 
   \begin{equation}\label{ineq:F}
 \d_tF^++v\cdot\nabla F^++\frac{\gamma}{\nu} F^+\leq \frac1\nu|[v^j,(-\Delta)^{-1}\d_i\d_j]\rho v^i|
 +\frac{\gamma}{\nu^2}|(-\Delta)^{-1}\div(\rho v)| + \frac{1}{ \nu}(\bar P-1).
 \end{equation}
 As $P(\rho)=\rho^\gamma,$   
   we have  $\bar P-1=(\gamma-1)\|e\|_1,$ so that
  the last term  may be bounded by $(\gamma-1)E_0.$ 
  Since the field $v$ is assumed to be smooth, applying the maximal principle for the transport equation yields:
  \begin{multline}\label{eq:F+}
 \|F^+(t)\|_\infty\leq  \e^{-\frac\gamma\nu t}\|F^+(0)\|_\infty +\frac1\nu\int_0^t\e^{-\frac{\gamma}{\nu}(t-\tau)}
 \|[v^j,(-\Delta)^{-1}\d_i\d_j]\rho v^i(\tau)\|_\infty\,d\tau\\
 +\frac{\gamma}{\nu^2}\int_0^t\e^{-\frac{\gamma}{\nu}(t-\tau)}\|(-\Delta)^{-1}\div(\rho v)(\tau)\|_\infty\,d\tau
+ \frac{\gamma-1}{ \gamma}\bigl(1-\e^{-\frac\gamma\nu t}\bigr)E_0.\end{multline}
Using that the average of $\rho v$ is zero,  Sobolev embedding and
the properties of continuity of Riesz operator imply that 
\begin{equation}\label{eq:d=2}
\|(-\Delta)^{-1}\div(\rho v)\|_\infty\lesssim \|(-\Delta)^{-1}\nabla\div(\rho v)\|_4
\lesssim\|\rho v\|_4.\end{equation}
Then, we use again \eqref{eq:interpo-des} and get
$$
\|(-\Delta)^{-1}\div(\rho v)\|_\infty\lesssim(\rho^*)^{\frac34}
 \|\sqrt\rho v\|_{2}^{\frac12}
\|\nabla v\|_{2}^{\frac12}\log^{\frac14}\biggl(\e+\rho^*
+\frac{\rho^*\|\nabla v\|_{2}^2}{\|\sqrt \rho v\|_{2}^2}\biggr),
$$
whence, thanks to the energy balance \eqref{eq:energy} and the definition of 
$\cE$ (assuming that $\nu\geq\mu$),
\begin{equation}\label{eq:F1}
\|(-\Delta)^{-1}\div(\rho v)\|_\infty\leq C(\rho^*)^{\frac34} E_0^{\frac14}
\|\nabla v\|_{2}^{\frac12}\log^{\frac14}\biggl(\e+ \rho^*+\frac{\rho^*\cE}{\mu\,E_0}\biggr)\cdotp
\end{equation}
Since $[v^j,(-\Delta)^{-1}\d_i\d_j]\rho v^i = [\wt v^j,\Delta^{-1}\d_i\d_j]\rho v^i$ with $\wt v=v-\bar v,$  
the second term in the r.h.s. of \eqref{eq:F+} may be bounded
by means of  Sobolev embedding and of the following Coifman, Lions, Meyer and Semmes
inequality (from \cite{CLMS}) as follows:
\begin{equation}\label{eq:CLMS}\|[v^j,(-\Delta)^{-1}\d_i\d_j]\rho v^i\|_\infty\lesssim
\|[\wt v^j,(-\Delta)^{-1}\d_i\d_j]\rho v^i\|_{W^{1,3}}
\lesssim\|\nabla v\|_{12}\|\rho v\|_4.
\end{equation}
To handle $\nabla v,$ we  use that
$$\begin{aligned}\|\nabla v\|_{12}&\lesssim \|\nabla\cP v\|_{12}+\nu^{-1}\bigl(\|\wt G\|_{12}+\|\wt P\|_{12}\bigr)\\
&\lesssim \|\nabla^2\cP v\|_2+\nu^{-1}\bigl(\|\nabla G\|_2+\|\wt P\|_\infty\bigr)\cdotp
 \end{aligned}$$
Hence, using once more \eqref{eq:interpo-des},
$$\displaylines{\quad
\|[v^j,(-\Delta)^{-1}\d_i\d_j]\rho v^i\|_\infty\lesssim (\rho^*)^{\frac34}
\bigl(\|\nabla^2\cP v\|_2+\nu^{-1} \|\nabla G\|_2
+\nu^{-1} \|\wt P\|_\infty\bigr) 
\hfill\cr\hfill\times \|\sqrt\rho v\|_{2}^{\frac12}
\|\nabla v\|_{2}^{\frac12}\log^{\frac14}\biggl(\e+\rho^*
+\frac{\rho^*\|\nabla v\|_{2}^2}{\|\sqrt \rho v\|_{2}^2}\biggr),\quad}
$$
whence, using the energy conservation \eqref{eq:energy} and the definition of 
$\cE$ and $\cD,$
\begin{multline}\label{eq:F2}
\|[v^j,(-\Delta)^{-1}\d_i\d_j]\rho v^i\|_\infty\lesssim \bigl((\rho^*)^{\frac54}
 \mu^{-1}\cD^{\frac12}+ (\rho^*)^{\frac34} \nu^{-1} \|\wt P\|_\infty\bigr)
 \\ \times  E_0^{\frac14}\|\nabla v\|_{2}^{\frac12}\log^{\frac14}\biggl(\e+ \rho^*
+\frac{\rho^*\cE}{\mu\,E_0}\biggr)\cdotp
\end{multline}
Plugging \eqref{eq:F1} and \eqref{eq:F2} in \eqref{eq:F+} and performing obvious
simplifications, we end up with 
\begin{multline}\label{eq:F3}
\|F^+(t)\|_\infty\leq\|F^+(0)\|_\infty  + \frac{\gamma-1}\gamma E_0\\
+C(\rho^*)^{\frac34}\frac{E_0^{\frac14}}\nu
\int_0^t\e^{-\frac\gamma\nu(t-\tau)}\biggl( \bigl(\sqrt{\rho^*}\mu^{-1}\cD^{\frac12}+\nu^{-1}(\|\wt P\|_\infty+\gamma) \bigr)\|\nabla v\|_{2}^{\frac12}\log^{\frac14}
\biggl(\e+\rho^*+\frac{\rho^*\cE}{\mu\,E_0}\biggr)\biggr)d\tau.
\end{multline}

%{\color{blue}
 Let us consider the largest sub-interval $[0,T_0]$ of $[0,T]$ on which 
\eqref{eq:rhomax} is fulfilled. 
Then, Inequality \eqref{eq:H1} tells us that there exist $\nu_0$  
 depending only on   $\|\rho_0\|_\infty,$ $\mu$ and $\gamma,$ 
and $C_0>0$ (depending also on $K,$ $E_0,$ $\|\rho_0\|_\infty$, $\|\nabla v_0\|_{2}$) so that 
we have for all $t\in[0,T_0],$ if $\nu\geq\nu_0,$
\begin{equation}\label{eq:DE}
\cE(t)+\int_0^t\cD(\tau)\,d\tau\leq C_0.
\end{equation}
Inequality  \eqref{eq:F3} thus becomes for a possibly larger $C_0,$ 
%(taking a larger $C_0$ as the case may be):
\begin{equation}\label{eq:F4}
\|F^+(t)\|_\infty\leq\|F^+(0)\|_\infty +  
\frac{\gamma-1}\gamma E_0 +\frac{C_0}{\nu}
\int_0^t\e^{-\frac{\gamma}{\nu}(t-\tau)}
\biggl(\frac{\cD^{\frac12}(\tau)}\mu+\frac1\nu\biggr)\|\nabla v(\tau)\|_{2}^{\frac12}\,d\tau.
 \end{equation}
 From H\"older  inequality, we have for all $t\in[0,T],$ 
$$\begin{aligned}
\int_0^t\e^{-\frac{\gamma}{\nu}(t-\tau)}
\cD^{\frac12}\|\nabla v\|_{2}^{\frac12}\,d\tau
&\leq C\biggl(\frac\nu\gamma\biggr)^{\frac14} \biggl(\int_0^T\cD(\tau)\,d\tau\biggr)^{\frac12}
\biggl(\int_0^T\|\nabla v(\tau)\|_{L^2}^2\,d\tau\biggr)^{\frac14}\\
\andf
\int_0^t\e^{-\frac{\gamma}{\nu}(t-\tau)}
\|\nabla v(\tau)\|_{2}^{\frac12}\,d\tau&\leq C\biggl(\frac\nu\gamma\biggr)^{\frac34}\biggl(\int_0^T\|\nabla v(\tau)\|_{L^2}^2\,d\tau\biggr)^{\frac14}.
\end{aligned}$$
  As the  integrals in the right-hand side may be bounded in terms of the data
  according to the basic energy balance  \eqref{eq:energy} and to \eqref{eq:DE}, 
 we eventually get (changing once again $C_0$ if needed)  if $\nu\geq\nu_0$:
 $$ \|F^+(t)\|_\infty\leq\|F^+(0)\|_\infty + C_0\nu^{-\frac14}+ \frac{\gamma-1}\gamma E_0.$$
%{\bf R. There is some freedom in \eqref{eq:CLMS}. Exponent $3/4$ is not optimal. 
%One can get almost $1$ but maybe not $1.$ Who cares !! we have it :) 
%in other words, we are not able to improve it to get something better...}
Of course, owing to the definition of $F^+$ and to \eqref{eq:H1} and \eqref{eq:F1}, we have
$$
\log\rho\leq F^++\nu^{-1}\|\Delta^{-1}\div(\rho v)\|_\infty\leq F^+
+\nu^{-1}  C_0.
$$
Hence one can eventually conclude that
\begin{equation}\label{eq:boundrho1}
\log\rho^*\leq \log\rho_0^* +C_0\nu^{-\frac14} + \frac{\gamma-1}\gamma E_0.
\end{equation}
Now, if $\nu$ is so large as to satisfy also 
$$
 C_0\nu^{-\frac14}  <\log2,
$$%}
then \eqref{eq:boundrho1} implies (\ref{eq:rhomax}) with a strict  inequality. 
So  $T_0=T.$%, and since $T$ is arbitrary, we obtain \eqref{eq:rhomax} on $[0,\infty).$
\end{proof}

%%%%%%%%%%%%%%%%%%%%%%

\subsection{Weighted estimates}

That section is devoted to the proof of the following result, that is based on the 
estimates that have been established  so far. 
For better readability, we  postpone the most technical parts  to the appendix. 
\begin{prop}\label{p:weight} Define $\nu_0$ as in Proposition \ref{p:boundrho2}.
 Let $T\geq 1$. Then, smooth solutions to \eqref{CNS}  on $[0,T]\times\T^2$ 
  fulfill, if $\nu\geq\nu_0$:
 \begin{equation}\label{eq:weight} \sup_{t\in[0,T]} \int_{\T^2} \rho |\dot v|^2 t\,dx +
  \int_{0}^{T} \!\!\!\int_{\T^2} (\mu|\nabla\cP \dot v|^2 + \nu | \div \dot v|^2) t \,dx \,dt 
  \leq C_0T\exp\Bigl(\frac{C_0T}\nu\Bigr),\end{equation}
   where  $C_0$ depends  on $\rho^*,$ $\mu,$ $\cE_0$ and on the pressure function, \emph{but is independent of $\nu$ and~$T$.}   \end{prop}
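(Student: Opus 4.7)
The approach is to test the equation satisfied by the material derivative $\dot v:=v_t+v\cdot\nabla v$ against $t\,\dot v.$ To derive that equation, one applies the operator $D_t:=\partial_t+v\cdot\nabla$ to the momentum equation of \eqref{CNS}. Using the continuity equation in the form $\dot\rho=-\rho\div v,$ together with the commutator identity $[D_t,\partial_j]f=-\partial_jv^k\,\partial_k f,$ a standard calculation produces
$$\rho\,\ddot v -\mu\Delta\dot v-(\lambda+\mu)\nabla\div\dot v = R,$$
where $R$ is a sum of: quadratic-in-$\nabla v$ commutator terms of the form $-\mu\,\partial_j(\partial_j v^k\,\partial_k v)$ and $-(\lambda+\mu)\nabla(\partial_k v^j\,\partial_jv^k),$ the lower-order piece $-\rho\div v\,\dot v,$ and pressure contributions coming from $D_t\nabla P,$ with $\dot P=-(h+P)\div v$ by the continuity equation.

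Next, I take the $L_2$ scalar product with $t\,\dot v.$ Exploiting the identity $\partial_t(\rho\dot v^i)+\div(\rho v\dot v^i)=\rho\ddot v^i$ together with the continuity equation, the resulting left-hand side rearranges as
$$\frac12\frac d{dt}\bigl(t\|\sqrt\rho\,\dot v\|_2^2\bigr)-\frac12\|\sqrt\rho\,\dot v\|_2^2 + t\bigl(\mu\|\nabla\cP\dot v\|_2^2+\nu\|\div\dot v\|_2^2\bigr),$$
the full dissipation being recovered from $\mu\|\nabla\dot v\|_2^2+(\lambda+\mu)\|\div\dot v\|_2^2$ exactly as in \eqref{eq:elliptic}. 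The term $\tfrac12\|\sqrt\rho\,\dot v\|_2^2$ is integrable in time thanks to Proposition \ref{p:H1a}. The core of the proof is the estimation of $t\int R\cdot\dot v\,dx.$ For the quadratic-in-$\nabla v$ contributions, integration by parts transfers one gradient onto $\dot v,$ so that they are controlled by $t\|\nabla v\|_4^2\|\nabla\dot v\|_2;$ thanks to the decomposition \eqref{eq:e2c} and the $2D$ interpolation $\|f\|_4^2\lesssim\|f\|_2\|\nabla f\|_2,$ the factor $\|\nabla v\|_4^2$ is bounded by quantities from $\cD,$ while a fixed fraction of $t\mu\|\nabla\cP\dot v\|_2^2+t\nu\|\div\dot v\|_2^2$ absorbs $\|\nabla\dot v\|_2.$ The contribution of $-\rho\div v\,\dot v$ gives $t\int\rho\div v\,|\dot v|^2\,dx,$ controlled by $\nu^{-1}(\|G\|_\infty+\|\wt P\|_\infty)\cdot t\|\sqrt\rho\,\dot v\|_2^2;$ this is the source of the Gronwall coefficient of size $\nu^{-1},$ hence of the $\exp(C_0T/\nu)$ factor in \eqref{eq:weight}.

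The most delicate pieces are the pressure terms: since $\nabla P$ is not in any Lebesgue space, I do not estimate them directly, but integrate by parts so as to move derivatives onto $\dot v,$ then replace $\dot P$ by $-(h+P)\div v,$ trading the pressure for $\div v,$ which is handled via $\nu\div v=\wt G+\wt P+\bar P$ and the bounds from Propositions \ref{p:H1a}--\ref{p:boundrho2}. Combining all these estimates yields an inequality of the form
$$\frac d{dt}\bigl(t\|\sqrt\rho\,\dot v\|_2^2\bigr)+\frac t2\bigl(\mu\|\nabla\cP\dot v\|_2^2+\nu\|\div\dot v\|_2^2\bigr)\leq C_0\,f(t)+\frac{C_0}\nu\,t\|\sqrt\rho\,\dot v\|_2^2,$$
with $f\in L^1(0,T)$ of norm $\lesssim C_0$ (independent of $\nu$). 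Gronwall's lemma together with $T\geq1$ then delivers \eqref{eq:weight}. The main obstacle is that neither $\nabla v\in L^\infty$ nor $\nabla P$ in any $L^p$ is available, so every non-absorbable term in $R$ must be re-expressed through $\cP v$ and $G,$ which do enjoy the good parabolic regularity granted by the previous propositions, at the cost of $\nu^{-1}$ factors that remain harmless precisely because $\nu$ is taken large.
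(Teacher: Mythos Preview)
Your overall strategy matches the paper's: apply $D_t$ to the momentum equation, test with $t\dot v$, and close by Gronwall. However, two steps in your sketch do not go through as written.

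First, the treatment of $\int\rho\,\div v\,|\dot v|^2\,t\,dx$ via $\nu^{-1}(\|G\|_\infty+\|\wt P\|_\infty)$ is circular: an $L_\infty$ bound on $G$ is only obtained \emph{after} this proposition, in Corollary~\ref{c:c}. The paper instead writes $\div v=\nu^{-1}(\wt P+\wt G)$ and uses $\|\wt G\|_2$ (available from Proposition~\ref{p:H1a}) together with $\|\sqrt t\,\dot v\|_4^2\lesssim\rho^*\|\sqrt t\,\nabla\dot v\|_2^2$ (the Poincar\'e-type inequality~\eqref{eq:poincarep}, valid since $\int\rho\dot v\,dx=0$). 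This produces $C_0\nu^{-1}\|\sqrt{\rho t}\,\dot v\|_2^2+C_0\nu^{-1/2}\|\sqrt t\,\nabla\dot v\|_2^2$; the second piece is absorbed on the left for $\nu$ large, and the first feeds Gronwall.

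Second, the claim that integration by parts reduces every commutator contribution to the form $(\nabla v)^2\nabla\dot v$ is too optimistic. The commutators $[D_t,\Delta]$ and $[D_t,\nabla\div]$ also generate terms of type $\nabla v\cdot\nabla^2 v\cdot\dot v$ (the paper's $I_2$ and $K_2$), for which $\nabla^2 v$ has no direct bound. The paper handles these by decomposing $v=\cP v-\nu^{-1}\nabla(-\Delta)^{-1}(\wt G+\wt P)$; the $\cP v$ and $G$ pieces then use \eqref{w23}--\eqref{eq:vL4}, while the $\wt P$ piece requires further integration by parts to avoid $\nabla P$ (this is relegated to the appendix and is the most technical part). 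Your sketch does not account for this, and the missing terms are precisely those responsible for factors like $T^{1/4}\|\sqrt{\rho t}\,\dot v\|_{L_\infty(0,T;L_2)}^{1/2}\|\sqrt t\,\nabla\dot v\|_{L_2}$ in the paper, which explain the linear-in-$T$ growth in~\eqref{eq:weight}.
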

\begin{proof} Here it will be convenient to  use the two notations $\dot f$ 
and $\frac D{Dt}f$ to designate the convective derivative of $f,$
and  we shall denote $A:B= \sum_{i,j} A_{ij}B_{ij}$
if $A$ and $B$ are two $d\times d$ matrices. Finally, if $v$ is a vector field on $\T^d$
then $(Dv)_{ij}:=\d_jv^i$ and $(\nabla v)_{ij}:=\d_iv^j$ for $1\leq i,j\leq d.$ 

\smallbreak
The general principle is  to rewrite the momentum equation as:
\begin{equation}\label{mom}
 \rho \dot v - \mu\Delta v -  (\nu-\mu)\nabla \div v + \nabla P =0,
\end{equation}
then to take the material derivative  and test it  by $t\,\dot v.$ We get
\begin{equation}\label{mom-d}
\int_{\T^d}\biggl(\frac D{Dt}(\rho\dot v)-\mu\frac D{Dt}\Delta v-(\nu-\mu)\frac D{Dt}\nabla\div v
+\frac D{Dt}\nabla P\biggr)\cdot (t\,\dot v)\,dx=0.
\end{equation}
The rest of the proof consists in describing    each term of \eqref{mom-d}. 
To this end, we shall repeatedly use the fact that
for all $\nu\geq\nu_0$ (where $\nu_0$ is given by Proposition \ref{p:boundrho2}), we have
\begin{equation}\label{w22}
 \|\nabla v\|_{L_4(0,T\times \T^2)} \leq C_0\cdotp
\end{equation}
Indeed, recall the decomposition
\begin{equation}\label{eq:decompo}
 v = \cP v - \frac{1}{\nu} \nabla (-\Delta)^{-1} (\wt G + \wt P).
\end{equation}
Proposition \ref{p:H1a} and Sobolev  embeddings  imply that 
 \begin{equation}\label{w22aa}
\|\nabla \cP v\|_{L_4(0,T \times \T^2)}\lesssim
\|\nabla\cP v\|_{L_\infty(0,T;L_2)}^{1/2}\|\nabla^2\cP v\|_{L_\infty(0,T;L_2)}^{1/2}\leq C_0.\end{equation}
 Furthermore,  we have
 \begin{equation}\label{w22a}
 \| \wt G\|_{L_4(0,T\times \T^2)} \lesssim\|\wt G\|_{L_\infty(0,T;L_2)}^{1/2}\|\nabla G\|_{L_2(0,T;L_2)}^{1/2}\leq\nu^{1/4}C_0,\end{equation}
 and 
  \begin{equation}\label{w22b}
  \| \wt P\|_{L_4(0,T\times \T^2)}\leq\|\wt P\|_{L_2(0,T;L_2)}^{1/2} \|\wt P\|_{L_\infty(0,T;L_\infty)}^{1/2}\leq\nu^{1/4} C_0.\end{equation}

\subsubsection*{Step 1}  
 Obvious computations give  (in any dimension):
\begin{equation}\label{w1a}
 \int_{\T^d} \frac D{Dt}(\rho\dot v) \cdot\dot v \, t \,dx = \frac 12 \int_{\T^d} \biggl(\frac{D}{Dt} ( \rho |\dot v|^2 t) +\dot \rho |\dot v|^2 t -  \rho |\dot v|^2\biggr) dx.
\end{equation}
Integrating by parts, we see that
\begin{equation}\label{w11}
 \int_{\T^d} \frac{D}{Dt} ( \rho|\dot v|^2 t)\,dx = \frac{d}{dt} \int_{\T^d} \rho|\dot v|^2 t\, dx -
 \int_{\T^d} \div v (\rho|\dot v|^2 t)\, dx.
\end{equation}
Thanks to the mass conservation equation, we have \begin{equation}
\int_{\T^d} \dot \rho|\dot v|^2 t\, dx = -\int_{\T^d} \rho \,\div v |\dot v|^2 t\, dx,
\end{equation}
whence 
$$\ \int_{\T^d} \frac D{Dt}(\rho\dot v) \cdot\dot v \, t \,dx  = 
\frac 12 \frac{d}{dt} \int_{\T^d} \rho |\dot v|^2 t dx
- \frac12 \int_{\T^d} \rho|\dot v|^2 \,dx -  \int_{\T^d} \rho \,\div v |\dot v|^2 t\, dx.$$
If $d=2$ then one can bound the last term using that
 $$\begin{aligned}
 \int_{\T^2}\rho\,\div v|\dot v|^2t\,dx&= \nu^{-1}\int_{\T^2} (\wt P+\wt G)\rho|\dot v|^2t\,dx\\
 &\leq \nu^{-1}\|\wt P\|_\infty\|\sqrt{\rho t}\,\dot v\|_2^2 + \rho^*\nu^{-1}\|\wt G\|_2\|\sqrt t\,\dot v\|_4^2\\
 &\leq C_0\nu^{-1}\|\sqrt{\rho t}\,\dot v\|_2^2+ C_0\nu^{-1/2}\|\sqrt t\,\dot v\|_4^2.\end{aligned}
 $$
Since $\int_{\T^2}\rho\dot v\,dx=0,$ one 
can take advantage of the Poincar\'e inequality \eqref{eq:poincarep} with $p=2$ and get:
\begin{equation}\label{eq:dotv4}
\|\sqrt{t}\,\dot v\|_{4}^2\leq C \|\sqrt{t}\,\dot v\|_{2}\|\sqrt{t}\,\nabla\dot v\|_{2}
\leq C(1+\|\wt\rho\|_2)\|\sqrt{t}\,\nabla\dot v\|_{2}^2\leq C\rho^*\|\sqrt{t}\,\nabla\dot v\|_{2}^2.
\end{equation}
%Hence we have
%\begin{equation}\label{eq:dotv4b}
%\|\sqrt t\,\dot v\|_{L_2(0,T;L_4)}\leq C\sqrt{\rho^*} \|\sqrt t\,\nabla\dot v\|_{L_2(0,T;L_2)}.\end{equation}
Hence, 
$$ \int_{\T^2}\rho\,\div v|\dot v|^2t\,dx\leq C_0\biggl(\nu^{-1}\|\sqrt{\rho t}\,\dot v\|_2^2
+\nu^{-1/2}\|\sqrt t\,\nabla\dot v\|_2^2\biggr),
$$
and thus
\begin{multline}\label{eq:step1}
 \int_{\T^d} \frac D{Dt}(\rho\dot v) \cdot\dot v \, t \,dx  \geq 
\frac 12 \frac{d}{dt} \int_{\T^2} \rho |\dot v|^2 t dx  - \frac12 \int_{\T^2} \rho|\dot v|^2 \,dx
\\-C_0\biggl(\nu^{-1}\int_{\T^2} \rho t\,|\dot v|^2\,dx+\nu^{-1/2}\int_{\T^2} t|\nabla\dot v|^2\,dx\biggr)\cdotp
\end{multline}
\subsubsection*{Step 2} To handle the second term of \eqref{mom-d}, we use that 
\begin{equation}\label{w2}
 -\frac{D}{Dt} \Delta v = - \div \frac{D}{Dt} \nabla v -\nabla v\cdot\nabla^2v\with
 (\nabla v\cdot\nabla^2v)^i:= \sum_{j,k} \d_k v^j\,\d_j\d_kv^i.
\end{equation}
%As we shall not use the special structure of the last term, 
%The same convention is applied
%to all the products of first and second derivatives of in the computations below. 
Hence, testing  \eqref{w2}  by $t\dot v$ and integrating by parts yields for $d=2,3,$
$$-\int_{\T^d}\frac D{Dt} \Delta v\cdot t\dot v\,dx= \int_{\T^d} \frac{D}{Dt} \nabla v : \nabla \dot v \, t \,dx - \int_{\T^d} (\nabla v \cdot \nabla^2 v) \cdot \dot v \, t \, dx.
$$
Since
$$
 \frac{D}{Dt} \nabla v = \nabla \dot v -  \nabla v\cdot \nabla v,
$$
we get  
\begin{equation}\label{w21}
-\int_{\T^d}\frac D{Dt} \Delta v\cdot t\dot v\,dx=  \int_{\T^d} |\nabla \dot v|^2t \,dx - \int_{\T^d} (\nabla v \cdot \nabla v): \nabla \dot v \, t \, dx
 - \int_{\T^d} (\nabla v\cdot \nabla^2 v) \cdot \dot v t \,dx.
\end{equation}
The first term is the main one. The other two terms 
are denoted by $I_1$ and $I_2,$ respectively.  
Bounding $I_1$ is easy : using H\"older inequality   yields
 $$
|I_1|=\biggl| \int_{\T^d} (\nabla v \cdot \nabla v):\nabla \dot v \, t \,dx\biggr|\leq \|t^{1/4}\nabla v\|_{4}^2 \|\sqrt t\,\nabla \dot v\|_2.$$
Therefore, we have according to \eqref{w22}, 
 \begin{equation}\label{WI1}
\biggl| \int_0^T I_1(t)\,dt\biggr|
 \leq C_0\sqrt T\,  \|\sqrt t\,\nabla \dot v \|_{L_2(0,T\times\T^2)}.\end{equation}
Bounding $I_2$ is much more involved. We eventually get (see the details in appendix):
\begin{multline}\label{eq:I2}
\biggl|\int_0^T I_2\,dt\biggr|\leq 
 \biggl( T^{1/4} \|\sqrt{\rho t}\,  \dot v \|_{L_\infty(0,T;L_2)}^{1/2}\|\sqrt t\,\dot v\|_{L_2(0,T;L_4)}\\+
\sqrt T\bigl(\nu^{-3/4}\|\sqrt t\,\nabla\dot v\|_{L_2(0,T;L_2)}
+\nu^{-2}\|\sqrt t\,\dot v\|_{L_2(0,T;L_4)}\bigr)\biggr)\cdotp
\end{multline}
Plugging \eqref{WI1} and \eqref{eq:I2} in \eqref{w21}
and using \eqref{eq:dotv4} yields
\begin{multline}\label{eq:step2-2}
-\mu\int_0^T\!\!\! \int_{\T^2} \biggl(\frac{D}{Dt} \Delta v\biggr) \cdot \dot v \, t \, dx \,dt \geq 
 \mu\int_0^T \!\!\!\int_{\T^2} |\nabla \dot v|^2t \, dx \,dt\\
 -C_0T^{1/4}\bigl(T^{1/4} +  \|\sqrt{\rho t}\,\dot v\|^{1/2}_{L_\infty(0,T;L_2)}\bigr)  \|\sqrt t\,\nabla\dot v\|_{L_2(0,T\times\T^2)}.
 \end{multline}

\subsubsection*{Step 3}

In order to bound  the third term of equation \eqref{mom-d}, we use the relation
\begin{equation}\label{w3}
 -\frac{D}{Dt}\nabla \div v = - \nabla \frac{D}{Dt} \div v + \nabla v\cdot \nabla \div v.
\end{equation}
We have to keep in mind that the right-hand side  involves only the potential part 
 $\cQ v$ of the velocity, since $\div v = \div \cQ v$. This enables us to write that
 $$
 \nabla\frac D{Dt}\div v=\nabla\div\dot v-\nabla({\rm tr}(\nabla v\cdot\nabla\cQ v)).
 $$
 Hence, testing \eqref{w3} with $\dot v\,t$ and integrating by parts, we find that
\begin{multline}\label{w31}
-\int_{\T^d}\!\frac{D}{Dt}\nabla \div v\cdot \dot v\,t\,dx=
\int_{\T^d} (\div\dot v)^2 \,t \, dx - K_1+K_2\\
\with K_1:=
 \int_{\T^d} {\rm Tr}(\nabla v\cdot \nabla \cQ v) \, \div \dot v \, t \, dx\andf
K_2:=\int_{\T^d} (\nabla v \cdot\nabla \div v)\cdot  \dot v \, t \, dx.
\end{multline}
Since  $\nu \cQ v = -\nabla (-\Delta)^{-1} (\wt G+ \wt P),$
using \eqref{w22}, \eqref{w22a} and \eqref{w22b},  we find that, if $d=2,$ 
\begin{eqnarray}\label{eq:K1}
\nu\left|\int_0^T K_1  \,dt\right| &\!\!\!\leq\!\!\!& C\sqrt T\, \|\nabla v\|_{L_4(0,T;L_4)}\|\wt P 
+ \wt G\|_{L_4(0,T;L_4)} \|\sqrt t\,\div \dot v\|_{L_2(0,T;L_2)}\nonumber \\
 &\!\!\!\leq\!\!\!& C_0\nu^{1/4}\sqrt T\, \|\sqrt t\,\div \dot v \|_{L_2(0,T;L_2)}.
\end{eqnarray}
Bounding $K_2$ will be performed in the appendix. In the end, we get
\begin{equation}\label{eq:K2}
\nu \biggl|\int_0^T K_2 \,dt\biggr| \leq C_0T^{1/4}\Bigl(\|\sqrt{\rho t}\, \dot v\|_{L_\infty(0,T;L_2)}^{1/2} 
 \|\sqrt t\,\dot v\|_{L_2(0,T;L_4)}
\!+\!T^{1/4}\|\sqrt t\,\nabla\dot v\|_{L_2(0,T;L_2)}\Bigr)\cdotp
\end{equation}
Thanks to \eqref{eq:dotv4}, the conclusion of this step is that if $\nu$ is large enough then
\begin{multline}\label{eq:step3}
-(\nu\!-\!\mu)\int_0^T \!\!\!\int_{\T^2} \frac{D}{Dt} \nabla \div v\cdot\dot v\,t\, dx\, dt \geq  (\nu\!-\!\mu)\! \int_0^T\!\!\! \int_{\T^2} (\div\dot v)^2 t \,dx\,dt \\
- C_0T^{1/4}\Bigl((\nu T)^{1/4}\,\|\sqrt t\, \div \dot v \|_{L_2(0,T;L_2)}+(\|\sqrt{\rho t}\, \dot v\|_{L_\infty(0,T;L_2)}^{1/2}+T^{1/4})\|\sqrt t\,\nabla\dot v\|_{L_2(0,T;L_2)}\Bigr)\cdotp
\end{multline}

\subsubsection*{Step 4}

The last term under  consideration in \eqref{mom} is
\begin{equation}\label{w4}
 \frac{D}{Dt} \nabla P = \nabla \frac{D}{Dt} P - \nabla v\cdot \nabla P.
\end{equation}
Here the analysis is simple: since $\dot P=-h \,\div v$,  we  have
$$ \begin{aligned}\int_{\T^d}\frac{D}{Dt} \nabla P\cdot \dot v\,t\,dx= L_1+L_2 \with 
L_1&:=\int_{\T^d} h \,\div v \, \div \dot v \, t \, dx \\ \andf
L_2&:=- \int_{\T^d} \d_iv^j\,\d_jP\,\dot v^i
%\biggl(\nabla \Bigl(\cP v - \frac{1}{\nu} \nabla(-\Delta)^{-1}(\wt G+\wt P)\Bigr) \cdot \nabla \wt P \biggr)\cdot  \dot v
 \, t \, dx.\end{aligned}
$$
On the one hand, we obviously have
\begin{equation}\label{w36}
| L_1 | \leq \frac\nu4 \int_{\T^d} (\div\dot v)^2 \, t \,  dx + T \nu^{-1}\|h \|_\infty^2 \int_{\T^d} (\div v)^2 \, dx
\quad\hbox{ for all }\ t\in[0,T]. 
\end{equation}
On the other hand, integrating by parts a couple of times and using $\div v=\nu^{-1}(\wt P+\wt G)$ yields 
$$\begin{aligned}
L_2&=\int_{\T^d} \wt P\,\nabla\div v\cdot\dot v\,t\,dx+\int_{\T^d}\wt P\, \nabla v:D\dot v\,t\,dx\\
&=\frac1\nu \int_{\T^d} \wt P\,\nabla P\cdot\dot v\,t\,dx+\frac1\nu \int_{\T^d} \wt P\, \nabla G\cdot\dot v\,t\,dx
+\int_{\T^d}\wt P\, \nabla v:D\dot v\,t\,dx\\
&=-\frac1{2\nu} \int_{\T^d} \wt P^2\,\div\dot v\,t\,dx+\frac1\nu \int_{\T^d} \wt P\, \nabla G\cdot\dot v\,t\,dx
+\int_{\T^d}\wt P\, \nabla v:D\dot v\,t\,dx.\end{aligned}$$
Hence we have, if $d=2,$ 
$$\displaylines{
\biggl|\int_0^TL_2(t)\,dt\biggr|\leq \frac{1}{2\nu}\|\wt P\|_{L_4(0,T;L_4)}^2\|t\,\div\dot v\|_{L_2(0,T;L_2)}
\hfill\cr\hfill+\frac{1}\nu \|\wt P\|_{L_\infty(0,T;L_4)}\|\nabla G\|_{L_2(0,T;L_2)}\| t\,\dot v\|_{L_2(0,T;L_4)}\hfill\cr\hfill
+\|\wt P\|_{L_\infty(0,T;L_\infty)}\|\nabla v\|_{L_2(0,T;L_2)}\|t\,\nabla\dot v\|_{L_2(0,T;L_2)},}
 $$
whence, thanks to  \eqref{w22b} and \eqref{eq:dotv4}, 
$$\biggl|\int_0^TL_2(t)\,dt\biggr|\leq C_0\sqrt T\,
\|\sqrt t\,\nabla\dot v\|_{L_2(0,T;L_2)}.
$$
So this step gives
\begin{multline}\label{eq:step4}
\int_{\T^2} \frac{D}{Dt} \nabla P\cdot \dot v\,t\,dx \geq - \frac\nu4 \int_0^T \!\!\!\int_{\T^2} (\div\dot v)^2 \, t \,dx\,dt \\
- \|h\|_\infty\nu^{-1}T\|\div v\|_{L_2(0,T;L_2)}^2-C_0\sqrt T\,\|\sqrt t\,\nabla\dot v\|_{L_2(0,T;L_2)}.\end{multline}

\subsubsection*{Step 5}

Plugging inequalities \eqref{eq:step1}, \eqref{eq:step2-2}, \eqref{eq:step3} and \eqref{eq:step4} 
in \eqref{mom-d} (after integrating on $[0,T]$) and using 
the fact that for a smooth solution, we have $\sqrt{\rho t} \, \dot v |_{t=0}=0,$
 we discover that for large enough $\nu,$ 
 $$\displaylines{
 \|\sqrt{\rho t} \, \dot v\|_{L_\infty(0,T;L_2)}^2 + 2\mu
 \int_0^T \|\sqrt t\, \nabla\cP \dot v\|_2^2 \,dt +\frac{3\nu}2\int_0^T \|\sqrt t\,\div \dot v\|_2^2\,dt
\hfill\cr\hfill \leq   C_0\biggl(\!\nu^{-1}\!\!\int_0^T\|\sqrt{\rho t}\dot v\|_2^2+\nu^{-1/2}\int_0^T\|\sqrt t\nabla\dot v\|_2^2\,dt\biggr)
 +\|\sqrt\rho\,\dot v\|_{L_2(0,T;L_2)}^2 + {\|h\|_\infty T}\nu^{-1}\|\div v\|_{L_2(0,T;L_2)}^2  \hfill\cr\hfill
  +C_0T^{1/4}\Bigl((\nu T)^{1/4}\|\sqrt{t}\,\div\dot v\|_{L_2(0,T;L_2)}
+\bigl(T^{1/4} +  \|\sqrt{\rho t}\,\dot v\|^{1/2}_{L_\infty(0,T;L_2)}\bigr)\Bigr)  \|\sqrt t\,\nabla\dot v\|_{L_2(0,T\times\T^2)}\cdotp
}$$
Taking advantage of inequality  \eqref{eq:H1}, we have
$$
\|\sqrt\rho\,\dot v\|_{L_2(0,T;L_2)}^2+\nu\|\div v\|_{L_2(0,T;L_2)}^2\leq C_0.
$$
Furthermore, Young inequality implies that 
$$\begin{aligned}
C_0\sqrt T\|\sqrt t\,\nabla\dot v\|_{L_2(0,T;L_2)}&\leq \frac\mu2\|\sqrt t\,\nabla\dot v\|_{L_2(0,T;L_2)}^2
+C_0T\\
C_0\nu^{1/4}\sqrt T\|\sqrt t\,\div\dot v\|_{L_2(0,T;L_2)}&\leq \frac\nu2\|\sqrt t\,\div\dot v\|_{L_2(0,T;L_2)}^2
+C_0T\nu^{-1/2}\andf\\
C_0T^{1/4} \|\sqrt{\rho t}\,\dot v\|_{L_\infty(0,T;L_2)}^{1/2}\|\sqrt t\,\nabla\dot v\|_{L_2(0,T;L_2)}
&\leq C_0T \!+\!\frac12 \|\sqrt{\rho t}\,\dot v\|_{L_\infty(0,T;L_2)}^2\!+\!\frac\mu2\|\sqrt t\,\nabla\dot v\|_{L_2(0,T;L_2)}^2.\end{aligned}
$$
In the end, we thus have  if $\nu$ is large enough and $T\geq1,$
%$$\displaylines{ \|\sqrt{\rho t} \, \dot v\|_{L_\infty(0,T;L_2)}^2 + \mu
 %\int_0^T \|\sqrt t\, \nabla\cP \dot v\|_2^2 \,dt +\nu\int_0^T \|\sqrt t\,\div \dot v\|_2^2\,dt
 %\leq       C_0\nu^{-1}\int_0^T\|\sqrt{\rho t}\dot v\|_2^2  \hfill\cr\hfill
% +C_0T^{1/2}\|\sqrt t\,\nabla\dot v\|_{L_2(0,T;L_2)}
 %+C_0\bigl(T^{1/2}\nu^{-1} +T^{1/4} \|\sqrt{\rho t}\,\dot v\|_{L_\infty(0,T;L_2)}^{1/2}\bigr)\|\sqrt t\,\dot v\|_{L_2(0,T;L_4)}+ C_0T.}$$
%At this point, one may use \eqref{eq:dotv4} to bound  $\sqrt t\,\dot v$ in $L_2(0,T;L_4).$ 
% Using repeatedly Young inequality, we arrive at 
$$\displaylines{
X^2(t)+\frac12\int_0^tY^2(\tau)\,d\tau\leq C_0T+C_0\nu^{-1}\int_0^TX^2\,d\tau\with\cr 
X(t):=\|\sqrt{\rho s}\,\dot v\|_{L_\infty(0,t;L_2)} \andf Y(t):= \bigl(\mu\|\sqrt t\, \nabla\cP \dot v\|_{L_2(0,T;L_2)}^2  
+ \nu \|\sqrt t\,\div \dot v\|_{L_2(0,T;L_2)}^2\bigr)^{\frac12}.}$$
Then, applying Gronwall inequality completes the proof of the proposition.
\end{proof}
\medbreak 
The following consequence  of those weighted estimate  will be fundamental  
in  the proof of uniqueness. 
\begin{col}\label{c:c}  Let $(\rho,v)$ be a smooth solution of \eqref{CNS} on $[0,T]\times\T^2$
and assume that $\nu\geq\nu_0.$ Then,  we have  for all $\varepsilon\in(0,1),$
\begin{equation}\label{c1x}
 \|\div v\|_{L_{2-\varepsilon}(0,T;L_\infty(\T^2))} \leq C_{0,T}\, \nu^{-1+\varepsilon}\andf 
   \|\nabla \cP v\|_{L_{2-\varepsilon}(0,T;L_\infty(\T^2))} \leq C_{0,T}, \end{equation}
 for some  $C_{0,T}>0$ depending on $\varepsilon,$  $T,$ $\mu$ and on $\cE_0,$ 
 but not on $\nu.$
 \end{col}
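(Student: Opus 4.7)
The plan is to derive $L_\infty$-in-space bounds for $G$ and $\nabla\cP v$ by applying Calder\'on--Zygmund theory to the elliptic equations for $G$ and $\cP v$ coming from the Lam\'e decomposition of the momentum equation, and then to convert them into $L_{2-\varepsilon}$-in-time control by combining the pointwise-in-time $L_2$ bound from Proposition~\ref{p:weight} with the unweighted estimates from Proposition~\ref{p:H1a}.

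First, the momentum equation rewrites as $\mu\Delta \cP v + \nabla G = \rho\dot v$; projecting onto divergence-free and potential parts gives $\mu\Delta \cP v = \cP(\rho\dot v)$ and $\Delta G = \div(\rho\dot v)$. Calder\'on--Zygmund in $L_p$ for $p\in(1,\infty)$ yields $\|\nabla G\|_p + \mu\|\nabla^2\cP v\|_p\lesssim \|\rho\dot v\|_p$. Since $\int_{\T^2}\rho\dot v\,dx=0$ (a consequence of momentum conservation) and $\rho\in L_\infty$, combining the two-dimensional Sobolev/Moser--Trudinger embedding $H^1\hookrightarrow L_p$ with a weighted Poincar\'e inequality yields $\|\rho\dot v\|_p\lesssim C_p\rho^*\|\nabla\dot v\|_2$. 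Next, the bound $\|\sqrt{\rho t}\,\dot v\|_{L_\infty(L_2)}\leq C_0$ provided by Proposition~\ref{p:weight} gives, via $\nabla G=\cQ(\rho\dot v)$, the pointwise control $\|\nabla G(t)\|_2\lesssim t^{-1/2}$. Interpolating in space between this $L_2$ bound and the $L_q$ bound (for $q$ large), one obtains, for any $p>2$ and $\alpha$ arbitrarily close to $2/p$,
\[
\|\nabla G\|_p \;\lesssim\; t^{-\alpha/2}\,\|\nabla \dot v\|_2^{1-\alpha}.
\]

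Then, writing $\|\nabla\dot v\|_2=t^{-1/2}\bigl(\sqrt t\,\|\nabla\dot v\|_2\bigr)$ and using that $\sqrt t\,\nabla\dot v\in L_2(L_2)$ by Proposition~\ref{p:weight}, a careful H\"older inequality in time shows that
\[
\int_0^T \|\nabla G\|_p^{2-\varepsilon}\,dt < +\infty
\]
exactly when $\alpha>2(1-\varepsilon)/(2-\varepsilon)$, equivalently $p<(2-\varepsilon)/(1-\varepsilon)$. Since $(2-\varepsilon)/(1-\varepsilon)>2$ for every $\varepsilon\in(0,1)$, a suitable $p>2$ may always be found. The 2D embedding $W^{1,p}\hookrightarrow L_\infty$ together with Poincar\'e applied to the zero-mean $\wt G$ then yields $\|G\|_\infty\leq|\bar G|+\|\wt G\|_\infty\lesssim P^*+\|\nabla G\|_p$, so that $\|G\|_{L_{2-\varepsilon}(0,T;L_\infty)}\leq C_{0,T}$ independently of $\nu$. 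The first inequality of the corollary then follows from $\div v=\nu^{-1}(G+P)$ and $\nu\geq\nu_0\geq 1$, since $\nu^{-1}\leq\nu^{-1+\varepsilon}$. The estimate on $\nabla\cP v$ is obtained in an entirely analogous manner, using $\int_{\T^2}\nabla\cP v\,dx=0$ to apply Poincar\'e and concluding that $\|\nabla\cP v\|_\infty\lesssim\|\nabla^2\cP v\|_p\lesssim C_p\|\nabla\dot v\|_2/\mu$.

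The main technical delicacy is the H\"older step: one has to balance the time-singularity $t^{-1/2}$ coming from $\|\sqrt{\rho t}\,\dot v\|_{L_\infty(L_2)}\leq C_0$ against the $L_2$-in-time integrability of $\sqrt t\,\nabla\dot v$. This balance forces the sharp constraint $p<(2-\varepsilon)/(1-\varepsilon)$, which in turn explains both the range $L_{2-\varepsilon}$ of time integrability (rather than $L_2$) and the necessary blow-up of $C_{0,T}$ as $\varepsilon\to0^+$ through the Morrey embedding constant for $W^{1,p}\hookrightarrow L_\infty$ and the H\"older exponents involved.
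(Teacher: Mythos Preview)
Your argument is correct and follows the same overall scheme as the paper's: elliptic regularity from $\nabla G=\cQ(\rho\dot v)$ and $-\mu\Delta\cP v=\cP(\rho\dot v)$, combined with the time-weighted bounds of Proposition~\ref{p:weight}, and then an interpolation plus a H\"older-in-time step to absorb the $t^{-1/2}$ singularity. The difference lies only in \emph{where} the interpolation is performed. The paper interpolates $\|\wt G\|_\infty$ via Gagliardo--Nirenberg between $\|\wt G\|_2$ and $\|\nabla G\|_p$, writing
\[
\|\wt G\|_\infty \lesssim \bigl(\nu^{-1/2}\|\wt G\|_2\bigr)^{\theta}\bigl(\sqrt t\,\|\nabla G\|_p\bigr)^{1-\theta}\,t^{(\theta-1)/2}\,\nu^{\theta/2},
\qquad \theta=\tfrac{p-2}{2p-2},
\]
and then using the bound $\nu^{-1/2}\|\wt G\|_2\leq C_0$ from Proposition~\ref{p:H1a}. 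The residual factor $\nu^{\theta/2}$ is precisely what produces the $\nu^{\varepsilon}$ loss, so that the paper ends up with $\|\div v\|_{L_{2-\varepsilon}(L_\infty)}\leq C_{0,T}\,\nu^{-1+\varepsilon}$. You instead interpolate one level higher, bounding $\|\nabla G\|_p$ between $\|\nabla G\|_2\lesssim t^{-1/2}$ (from Proposition~\ref{p:weight}) and $\|\nabla G\|_q\lesssim\|\nabla\dot v\|_2$, and finish with Morrey. Since neither of these endpoints carries a $\nu$-factor, your route actually yields the sharper bound $\|\div v\|_{L_{2-\varepsilon}(L_\infty)}\leq C_{0,T}\,\nu^{-1}$, which of course implies the stated inequality. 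Two small remarks: the phrase ``exactly when'' in your H\"older discussion should be ``provided that'' (the implication goes only one way), and for $\nabla\cP v$ your final displayed bound $\|\nabla\cP v\|_\infty\lesssim\|\nabla\dot v\|_2$ alone is not enough---you still need the analogous interpolation with $\|\nabla^2\cP v\|_2\lesssim t^{-1/2}$, but since you say ``entirely analogous'' this is clearly intended.
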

\begin{proof} Remember  that
\begin{equation}\label{w20}
 \mu(\nabla\div v-\Delta v) + \nabla G = \rho \dot v.
\end{equation}
Hence, we have 
$$\nabla(\sqrt t\, G)=\cQ(\sqrt t\,\rho\dot v)\andf -\mu\Delta(\sqrt t\,\cP v)=\cP(\sqrt t\,\rho\dot v).$$
Then, combining the fact that $\cP$ and $\cQ$ map $L_p(\T^2)$ to itself for all $1<p<\infty$ with routine 
interpolation inequalities yields  for all $t\in[0,T]$ and $p\in[2,\infty),$
$$\|\nabla(\sqrt t\, G)\|_p+\|\nabla^2(\sqrt t\,\cP v)\|_p\leq C_{\rho,T} \|\sqrt{\rho t}\,\dot v\|_2^{\frac2p}
\|\nabla(\sqrt t\,\dot v)\|_2^{1-\frac2p},$$
which already gives, after time integration and use of Inequality \eqref{eq:weight}, that 
$$  \int_0^T\Bigl( \|\nabla(\sqrt t\,G)\|^{\frac{2p}{p-2}}_p+
    \|\nabla^2(\sqrt t\,\cP v)\|^{\frac{2p}{p-2}}_p\Bigr) dt
  \leq C_{0,T}\quad\hbox{if }\ \nu\geq\nu_0.  $$
Next, in order to bound $\wt G,$ one can argue again by interpolation
writing that 
$$\|\wt G\|_\infty\lesssim\bigl(\nu^{-\frac12}\|\wt G\|_2\bigr)^\theta\bigl(\sqrt t\|\nabla G\|_p\bigr)^{1-\theta}
t^{\frac{\theta-1}2}\nu^{\frac\theta2}\with \theta:=\frac{p-2}{2p-2}\cdotp$$
Since the previous computations ensure that 
$$t\mapsto \bigl(\sqrt t\|\nabla G\|_p\bigr)^{1-\theta}\ \hbox{ is in }\ L_{\frac{4p-4}{p-2},loc}(\R_+)$$
and, because $t\mapsto t^{\frac{\theta-1}2}$ is in $L_{q,loc}(\R_+)$ for all $q<\frac{4p-4}p$
and, finally, $t\mapsto \|\wt G(t)\|_2$ is bounded according to Proposition \ref{p:H1a}, 
one can conclude that for all $\varepsilon\in(0,1),$ 
$$
\int_0^t\|\wt G\|_\infty^{2-\varepsilon}\,dt\leq C_{0,T}\nu^{\varepsilon}.
$$
Bounding $\nabla\cP v$ in $L_\infty(\T^2)$ follows from  similar arguments. 
Finally, as $\div v=\nu^{-1}(\wt G+\wt P)$ and $P$ is bounded, one gets the desired inequality for $\div v.$
\end{proof}
%\begin{rmk}\label{r:div} The proof of the above corollary 
%actually supplies a control on $\div v$ and $\nabla\cP v$ in 
%$L_r(0,T;L_\infty)$ for some  $r>1.$ This will be useful for uniqueness. 
%\end{rmk}

%%%%%%%%%%%%%%%%%%%%%%%%%%%%%%%%%%%%%%%%%%%%%%%%%%%%%%%

\section{The proof of   existence in dimensions $2$ and $3$}\label{s:existence}

This section is mainly devoted to the construction of solutions fulfilling Theorem \ref{thm:global2}
(or the corresponding statement in dimension $3,$ see the appendix). 
   The main  two difficulties we have to face is that the initial density has no regularity whatsoever
and is not positive. To fit in the classical literature devoted to the compressible Navier-Stokes equations, 
one   has to mollify the  initial data  and  to make the density 
strictly positive. Although this procedure does not disturb  the  a priori estimates we proved 
hitherto, the state-of-the-art on the topics just  ensures  the existence of a smooth solution corresponding to the 
regularized data on    some   \emph{finite} time interval. 
As a first, we thus have to justify that, indeed, the estimates we proved so far
ensure that   the approximate smooth solution is global, if  $\nu$ is large enough.
Then, resorting to  rather classical compactness 
arguments will enable us to conclude 
the  proof of Theorem \ref{thm:global2}.
\smallbreak
At the end of the section, we justify the convergence from \eqref{CNS} to \eqref{INS},
namely we prove Theorem \ref{thm:asymptotic}.
The passing to the limit therein is very similar to Step 4 of the proof of existence.

\subsubsection*{Step 1.} The original initial data are:
\begin{equation}\label{e1}
 \rho_0 \in L_\infty(\T^d) \mbox{ and } v_0 \in H^1(\T^d).
\end{equation}
First, we want to change the initial density in such a way that it is bounded away from zero and
still has total mass equal to one. To this end, we introduce  for any $\delta\in(0,1),$ 
\begin{equation}\label{e2}
 \tilde\rho^\delta_0=\max\{ \rho_0,\delta\}
% \end{equation}
\mbox{ \ \ and  then \ \ }
% \begin{equation}
 \check\rho_0^\delta= \min\{ \xi_\delta, \tilde \rho_0^\delta\},
\end{equation}
where $\xi_\delta\geq1$ is fixed so that 
\begin{equation}
 \int_{\T^d} \check\rho^\delta_0\,dx =1.
\end{equation}
Clearly,  we have $\xi_\delta\to\rho_0^*:=\|\rho_0\|_\infty$ when $\delta\to0,$  and thus 
\begin{equation}
 \delta \leq \check\rho_0^\delta \leq \rho^*_0\andf \check\rho_0^\delta\to\rho_0\ \hbox{ pointwise}.
\end{equation}
Then, we  smooth out both $\check\rho_0^\delta$ and $v_0^\delta$ as follows:
\begin{equation}\label{e4}
 \rho^\delta_0 = \pi_\delta \ast \check\rho_0^\delta \mbox{ \ and \ }
 v_0^\delta = \pi_\delta \ast v_0,
\end{equation}
where $(\pi_\delta)_{\delta>0}$ is a family of positive mollifiers. 
\medbreak
Let us emphasize that the total mass of $\rho_0^\delta$ is still equal to one, and that
$\rho_0^\delta\geq\delta.$

\subsubsection*{Step 2.}
 We solve \eqref{CNS} with data $(\rho_0^\delta,v_0^\delta)$
 according to the classical literature.  For example, one may use the following result  (see \cite{D10,Mu03,S}): 
\begin{thm}\label{thm:lwp}
 Let $\rho_0\in W^1_p(\T^d)$ and  $v_0 \in W^{2-2/p}_p(\T^d)$ for some  $p>d,$ with $d=2,3.$
 Assume that $\rho_0>0.$ Then there exists
 $T_*>0$ depending only on the norms of the data, and on $\inf_{\T^d}\rho_0$ such that  \eqref{CNS} supplemented with data
 $\rho_0$ and $v_0$ has a  unique solution $(\rho,v)$ on the time interval $[0,T_*],$  satisfying\footnote{Recall that  $W^{1,2}_p(0,T_*\times\T^d)$ designates the set of
 functions $v:[0,T_*)\times\T^d\to\R^d$ such that $v\in W^{1}_p(0,T_*;L_p(\T^d))\cap L_p(0,T_*;W^2_p(\T^d)),$ and $W^{2-2/p}_p(\T^d),$ the corresponding trace space on $t=0$ (that may 
 be identified to the Besov space $B^{2-2/p}_{p,p}(\T^d)$).}
 \begin{equation}\label{e5}
  v \in W^{1,2}_p(0,T_*\times\T^d )\andf 
  \rho \in \cC([0,T_*];W^1_p(\T^d)).
 \end{equation}
\end{thm}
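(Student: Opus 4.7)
My plan is to prove Theorem \ref{thm:lwp} by a contraction mapping argument that decouples the transport equation for $\rho$ from the parabolic equation for $v$, exploiting $L_p$-maximal regularity for the Lamé operator on the torus. Fix $R>0$ larger than the norm of the data, and consider the closed ball
\[
\mathcal{X}_{T,R}:=\bigl\{(\varrho,w)\ :\ \varrho\in \cC([0,T];W^1_p),\ w\in W^{1,2}_p(0,T\times\T^d),\ \|w\|_{W^{1,2}_p}+\|\varrho\|_{\cC(W^1_p)}\leq R,\ \varrho\geq \tfrac12\inf\rho_0\bigr\}.
\]
Given $(\varrho,w)\in\mathcal{X}_{T,R}$, I would define $\Phi(\varrho,w)=(\bar\rho,\bar v)$ as follows. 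First, $\bar\rho$ is obtained by solving the linear transport equation
\[
\bar\rho_t+w\cdot\nabla\bar\rho+\bar\rho\,\div w=0,\qquad \bar\rho|_{t=0}=\rho_0,
\]
along the flow of $w$; since $p>d$ embeds $W^1_p\hookrightarrow L_\infty$, so that $w\in L_1(0,T;W^1_\infty)$, this transport equation is well-posed in $\cC([0,T];W^1_p)$ by standard estimates (differentiate the equation, apply Gronwall), and the multiplicative lower bound $\exp\bigl(-\int_0^t\|\div w\|_\infty\,d\tau\bigr)\inf\rho_0$ keeps $\bar\rho$ strictly positive. Next, I define $\bar v$ as the solution of the linear parabolic problem
\[
\varrho\,\bar v_t-\mu\Delta\bar v-(\lambda+\mu)\nabla\div\bar v=-\varrho\,w\cdot\nabla w-\nabla P(\varrho),\qquad \bar v|_{t=0}=v_0.
\]

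The heart of the argument is $L_p$-maximal regularity for this Lamé-type system with variable density coefficient. Writing $\varrho=\rho_0+(\varrho-\rho_0)$ and absorbing the small perturbation (small on short time, since $\varrho\in \cC([0,T];W^1_p)$ and $W^1_p$ is a Banach algebra for $p>d$) into the right-hand side, one can use the known $L_p$-maximal regularity for the constant-coefficient Lamé operator on $\T^d$ (which is a consequence of the Mikhlin multiplier theorem, see e.g.\ Denk–Hieber–Pr\"uss and the references \cite{D10,Mu03,S}) to obtain
\[
\|\bar v\|_{W^{1,2}_p(0,T\times\T^d)}\leq C\bigl(\|v_0\|_{W^{2-2/p}_p}+\|\varrho w\cdot\nabla w\|_{L_p(0,T;L_p)}+\|\nabla P(\varrho)\|_{L_p(0,T;L_p)}\bigr).
\]
Because $p>d$, the embedding $W^1_p\hookrightarrow L_\infty$ and the algebra property give $\|\varrho w\cdot\nabla w\|_{L_p L_p}\lesssim T^{1/p}\|\varrho\|_{\cC W^1_p}\|w\|_{L_\infty W^1_p}\|\nabla w\|_{L_\infty L_p}$, while $\|\nabla P(\varrho)\|_{L_pL_p}\lesssim T^{1/p}\|P'\|_{\mathrm{Lip}}\|\varrho\|_{\cC W^1_p}$. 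Thus choosing $T_*$ small enough in terms of $R,\ \inf\rho_0,\ \|v_0\|_{W^{2-2/p}_p},\ \|\rho_0\|_{W^1_p}$, the map $\Phi$ sends $\mathcal{X}_{T_*,R}$ into itself.

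For contraction, I would take two points $(\varrho_i,w_i)$ in $\mathcal{X}_{T_*,R}$ and form the differences $\delta\varrho=\varrho_1-\varrho_2$, $\delta w=w_1-w_2$. The transport equation for $\delta\bar\rho$ has source terms linear in $\delta w$, and the parabolic equation for $\delta\bar v$ has source terms linear in $\delta\varrho$ and $\delta w$; each of these is multiplied by a factor of $T_*^{1/p}$ after applying the same maximal regularity estimate, so for $T_*$ sufficiently small (and possibly shrinking again), $\Phi$ is a strict contraction. The unique fixed point yields the desired solution $(\rho,v)$ with the stated regularity. The main obstacle in this scheme is the variable-coefficient maximal regularity: one needs the positivity $\inf_{\T^d}\rho_0>0$ precisely so that $\varrho$ does not degenerate and so that the Lamé operator weighted by $\varrho$ remains uniformly elliptic; the smallness of $T_*$ depends on $\inf_{\T^d}\rho_0$ exactly because the perturbation argument requires $\|\varrho-\rho_0\|_{L_\infty}$ to be small compared with $\inf\rho_0$. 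Uniqueness follows by repeating the contraction estimate applied to the difference of two solutions with the same data.
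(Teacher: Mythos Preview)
The paper does not supply its own proof of this theorem: it is quoted as a known result from the classical literature, with the references \cite{D10,Mu03,S} given in lieu of an argument. Your sketch follows essentially the standard route taken in those references---a fixed-point scheme that decouples the transport equation for the density from the momentum equation, combined with $L_p$-maximal regularity for the Lam\'e operator with variable (but H\"older, hence bounded and non-degenerate) leading coefficient. The only places where a careful write-up would require more work are (i) the variable-coefficient maximal regularity step, where freezing the density at $\rho_0$ and treating $\varrho-\rho_0$ as a perturbation needs the continuity of $\varrho$ in $W^1_p$ (or at least in $L_\infty$) together with a smallness-in-time argument, and (ii) the contraction estimate for the transport part, which in $W^1_p$ is delicate because the difference equation involves $\nabla\delta w$ acting on $\nabla\bar\rho$; one usually performs the contraction in a weaker norm (say $L_p$ for the density) and recovers the full regularity a posteriori. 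Both points are handled in the cited works, so your outline is in line with what the paper invokes.
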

Let  $(\rho^\delta,v^\delta)$ be the maximal solution pertaining to data $(\rho_0^\delta,v_0^\delta)$ provided by the above statement, and let $T^\delta$ be  the  largest time  so that $(\rho^\delta,v^\delta)$ fulfills \eqref{e5} for all $T<T^\delta$. 
Since $(\rho^\delta,v^\delta)$ is  smooth and with no vacuum, it satisfies  all the formal estimates 
we proved so far,  with the same constants \emph{independent of $\delta.$} 
 In particular, $\div v^\delta$ is in $L_1(0,T;L_\infty)$ for all $T<T^\delta,$
 which implies that $\rho^\delta$ is bounded from below and above,
 according to:
\begin{equation}\label{e3}
 \delta \exp\biggl\{-\int_0^T \|\div v^\delta\|_\infty\,dt\biggr\} \leq \rho^\delta(t,x) \leq 
 \rho^*_0 \exp\biggl\{ \int_0^T \|\div v^\delta\|_\infty\,dt\biggr\}\cdotp
\end{equation}

\subsubsection*{Step 3.}  Our goal here is to prove that the solution $(\rho^\delta,v^\delta)$ is actually global. 
To achieve it, we  argue by contradiction, assuming that $T^\delta$ is finite.
\smallbreak
The classical estimates for the continuity equations imply that for all $T<T^\delta$
(dropping exponents $\delta$ on $(\rho^\delta,v^\delta),$ for better readability):
\begin{equation}\label{e6}
 \|\nabla \rho(T)\|_p \leq    \|\nabla \rho_0^\delta\|_p 
+ C\int_0^T\big( \|\nabla v\|_\infty \|\nabla \rho\|_p + \|\nabla \div v\|_p\big)dt.
\end{equation}
Observe that  the previous sections ensure that we have  
$\sqrt{\rho t}\,\dot v\in L_\infty(0,T^\delta;L_2)$ and $\sqrt t\,\nabla\dot v\in L_2(0,T^\delta;L_2).$
 Combining with straightforward interpolation arguments and H\"older inequality, we deduce that 
 \begin{eqnarray}\label{e10}
 \rho\dot v\in L_a(0,T^\delta;L_p(\T^2))&\quad\hbox{for all } \ 2<p<\infty\andf a<p'&\quad\!\!\!\hbox{if }\ d=2,\\\label{e11}
 \rho\dot v\in L_a(0,T^\delta;L_p(\T^3))&\quad\hbox{for all }\ p\in]2,6[\andf \frac1a=\frac54-\frac{3}{2p}&\quad\!\!\!\hbox{if }\ d=3.
 \end{eqnarray}
 Remembering that  $\Delta v + \nu \nabla \div v - \nabla P=-\rho\dot v,$  we thus get
\begin{equation}\label{e7}
\Delta v + \nu \nabla \div v - \nabla P\in  L_{a}(0,T;L_p), 
\end{equation}
whence, using $L_p$ estimates for the Riesz operator and the fact that $P=\rho^\gamma$
with $\rho$  bounded,  one may conclude that, uniformly with respect to $\delta,$ we have for all $t<T^\delta,$
\begin{equation}\label{e8}
 \|\nabla^2 v(t) \|_p  \leq C \|\nabla \rho(t) \|_p + h(t)\mbox{ \ \ with \ \ }
  h \in L_{a}(0,T^\delta).
\end{equation}
Hence we have for all $T<T^\delta,$
\begin{equation}\label{e9}
 \|\nabla \rho(T)\|_p \leq \left(\|\nabla \rho^\delta_0\|_p+\int_0^T h(t)\,dt\right)
 \exp\biggl\{\int_0^T C(1+\|\nabla v\|_\infty)\, dt\biggr\}\cdotp
\end{equation}

In order to close the estimates, we have to bound  $\|\nabla v\|_\infty$. 
Since $p>d$ and $\nabla v$ is bounded in $L_1(0,T;BMO)$ 
(recall Corollary \ref{c:c}),  one may start from  the following well known logarithmic inequality:
$$ \|\nabla v\|_\infty \leq C\|\nabla v\|_{BMO} \log\biggl(\e+\frac{\|\nabla v\|_{W^1_p}}{\|\nabla v\|_{BMO}}\biggr),$$
which, in light of \eqref{e8}, implies that 
$$
 \|\nabla v\|_\infty \leq C\|\nabla v\|_{BMO} \log\biggl(\e+\frac{h+\|\nabla \rho\|_{p}}{\|\nabla v\|_{BMO}}\biggr)\cdotp
$$
Hence, plugging that inequality in \eqref{e6}, we discover that for all $T<T^\delta,$
$$
 \|\nabla \rho(T)\|_p \leq    \|\nabla \rho_0^\delta\|_p 
+\int_0^Th\,dt +C\int_0^T\biggl(1+\|\nabla v\|_{BMO}\log\biggl(\e+\frac{h+\|\nabla \rho\|_{p}}{\|\nabla v\|_{BMO}}\biggr)\biggr)
 \|\nabla \rho\|_p\,dt.
$$
Since 
$$
\|\nabla v\|_{BMO}\log\biggl(\e+\frac{h}{\|\nabla v\|_{BMO}}\biggr)\leq C\max(h,\|\nabla v\|_{BMO})
$$
and
$$
\|\nabla v\|_{BMO}\log\biggl(\e+\frac{\|\nabla \rho\|_{p}}{\|\nabla v\|_{BMO}}\biggr)\leq C(1+\|\nabla v\|_{BMO})\log(\e+\|\nabla \rho\|_p),
$$
we get
$$
\displaylines{ \|\nabla \rho(T)\|_p \leq    \|\nabla \rho_0^\delta\|_p 
+\int_0^Th\bigl(1+C\|\nabla \rho\|_p\bigr)dt
+C\int_0^T\bigl(1+\|\nabla v\|_{BMO}\bigr)\log\bigl(\e+\|\nabla \rho\|_{p}\bigr) \|\nabla \rho\|_p\,dt.}
$$
From this and Osgood lemma,  one can conclude (as $T^\delta$ is finite) that  $\nabla \rho$ and 
$\nabla u$ belong to  $L_\infty(0,T^\delta;L_p)$ and $L_1(0,T^\delta;L_\infty),$ respectively. 

Putting  together with \eqref{e8}, this  leads  to 
\begin{equation}\label{e13}
 \rho_t =-\div (v \rho) \in L^{a}(0,T^\delta;L_p).
\end{equation}
Hence, by Sobolev embedding,  one can conclude  that there exists $\alpha >0$ such that
\begin{equation}\label{e14} \rho\in  C^{\alpha}([0,T^\delta)\times\T^d).\end{equation}
Now, one can  go back to the momentum equation of \eqref{CNS}, written in the form
\begin{equation}\label{e15}
 \rho v_t - \mu \Delta v - \nu \nabla \div v = -\nabla P - \rho v \cdot \nabla v.
\end{equation}
Thanks to \eqref{e3} and  \eqref{e14}, one may apply Theorem 2.2. of \cite{D10} and  get
\begin{equation}\label{e16}
 \|v\|_{W^{1,2}_p(0,T^\delta\times\T^d)} \leq C_\delta\left( \|\nabla P\|_{L_p(0,T^\delta\times\T^d)}
  + \|v\cdot \nabla v\|_{L_p(0,T^\delta\times\T^d)}\right)\cdotp
\end{equation}
For general $p>2$ if $d=2,$ or $2<p<6$ if $d=3,$ we do not know how to prove 
directly that $v\cdot\nabla v$ is in $L_p(0,T^\delta\times\T^d),$ 
and we shall need several steps. 

More precisely, if $d=2,$ then one may use the fact that for all $p<q<\infty,$ 
$$
\|\nabla v\|_{p^*}\leq C \|\nabla^2v\|_{p}^{1/2}\|v\|_q^{1/2}\with \frac1{p^*}=\frac12\biggl(\frac1p+\frac1q\biggr)
$$
which, combined with the fact that $v\in L_\infty(0,T^\delta;H^1(\T^2))$ (from Proposition  \ref{p:H1a}) and thus $v\in L_\infty(0,T^\delta;L_r(\T^2))$ for all $r<\infty,$
and \eqref{e8} implies that $v\cdot\nabla v\in L_{2a}(0,T^\delta;L_p(\T^2)).$
Hence the right-hand side of  \eqref{e15} belongs to  $L_{2a}(0,T^\delta;L_p(\T^2)),$  
and Theorem 2.2. of \cite{D10} implies that
$$
\d_t v\in  L_{2a}(0,T^\delta;L_p(\T^2))\andf \nabla^2 v \in L_{2a}(0,T^\delta;L_p(\T^2)).
$$
Starting from that new information and arguing as above entails that   the right-hand side of  \eqref{e15} belongs to  $L_{4a}(0,T^\delta;L_p(\T^2)),$
and so on. After a finite number of steps, we eventually reach $v\in W^{1,2}_p(0,T^\delta\times\T^2).$
\medbreak
% In the 2D case we have from Proposition  \ref{p:H1a} and Sobolev embedding,
%\begin{equation}\label{e17}
 %v \in L_\infty(0,T;H^1(\T^2)) \subset L_\infty(0,T;L_{q}(\T^2))\quad\hbox{for all }\ q<\infty.
%\end{equation}
%To estimate the gradient, one may use the  interpolation estimate:
%\begin{equation}\label{e18}
 %\|\nabla v\|_{{2p}} \leq  C\|\nabla^2 v\|_{p}^{1/p}\|v\|_p^{1-1/p}. 
%\end{equation}
For the 3D case we note that the information that  $v\in L_\infty(0,T^\delta;H^1(\T^2))$ implies that 
\begin{equation}\label{e19}
 v \in L_\infty(0,T;L_6(\T^3)).
\end{equation}
Hence, to bound $v\cdot\nabla v$ in $L_p(\T^3),$ we need  to have  $\nabla v$ in $L_k$ with $k$ such that $\frac{1}{6} + \frac{1}{k} = \frac{1}{p}$
(remember that $2<p<6$ in the 3D case).  By interpolation and the definition of $a$ in \eqref{e11}, we have
\begin{equation}\label{e20}
 \|\nabla v\|_k \leq C\|\nabla^2 v\|_p^{1-a/4} \|v\|_6^{a/4}.
\end{equation}
Hence 
$$
 \|\nabla v\|_k^{\frac{4a}{4-a}} \leq C\|\nabla^2 v\|_p^{a} \|v\|_6^{\frac{4a^2}{16-4a}}, 
 $$
and $v\cdot\nabla v$ is thus in $L^{4a/(4-a)}(0,T^\delta; L_p(\T^3))$ which, in view of Theorem 2.2. of \cite{D10} 
yields 
$$
\d_t v\in  L_{4a/(4-a)}(0,T^\delta;L_p(\T^3))\andf \nabla^2 v \in L_{4a/(4-a)}(0,T^\delta;L_p(\T^3)).
$$
Again, after a finite number of steps, we achieve 
%To understand the above inequality we note that 
%\begin{equation}\label{21} W^{1/2}_p(\T^3) \subset L_k(\T^3), \mbox{ \ since  \ \ } \frac{3}{1/2} \biggl(\frac 1p - \frac 1k\biggr) =1,
%\end{equation}and by definition $\frac 1p -\frac 1k = \frac 16\cdotp$\medbreak
%Hence, reverting to  \eqref{e16},  one may conclude that 
\begin{equation}\label{e22}
 \|v\|_{W^{1,2}_p(0,T^\delta\times\T^d)}<\infty.
\end{equation}
Now,  thanks to the trace theorem and the estimates that we proved for $\rho,$ one may conclude that, if $T^\delta$ is finite, then 
$$\sup_{T<T^\delta} \bigl(\|v(T)\|_{W^{2-2/p}_p}+\|\rho(T)\|_{W^1_p}\bigr)<\infty\andf
\inf_{T<T^\delta} \rho(T)>0.$$
Thanks to that information, one may solve System \eqref{CNS} supplemented with initial data $(\rho(T),v(T))$ whenever
$T<T^\delta,$ and the existence time $T_*$ provided by Theorem \ref{thm:lwp} is independent of $T.$ 
In that way, taking $T=T^\delta-T_*/2,$ we get a continuation of the solution beyond $T^\delta,$ thus contradicting the definition of $T^\delta.$

Hence $T^\delta=+\infty.$ In other words, the solution $(\rho^\delta,v^\delta)$ is global and all 
the estimates of the previous sections are true on $\R_+.$ Furthermore, 
it is clear that they  are uniform with respect to $\delta.$

\subsubsection*{Step 4.}

The previous step ensures uniform boundedness of $(\rho^\delta,v^\delta)$
in the desired existence space. 
The last step is to prove the  convergence of a subsequence. 
Since we have more  regularity  than  
in the classical weak solutions  theory, one can 
pass to the limit by following the steps therein. However, this would   give  some 
restriction on the pressure laws that one can consider
(typically, if $P=a\rho^\gamma,$ then one has to assume that $\gamma>d/2$). 
In our case, the higher  regularity of the velocity  will enable us to pass to the limit  for
rather general pressure laws,  and by means of a much more elementary method.  
\smallbreak
To start with, let us observe that, up to extraction, we have 
\begin{equation}\label{eee}
v^\delta\to v\ \hbox{ in } \ L_2(0,T\times\T^d)\ \hbox{ for all }\ T>0.\end{equation}
Indeed, since $(v^\delta)$ and $(\sqrt t\, v_t^\delta)$ are bounded in 
$L_2(0,T;L_2),$ Lemma 3.2 of \cite{DM-1} implies that 
$(v^\delta)$ is bounded in $H^{\frac12-\alpha}(0,T;L_2(\T^d))$ for all $\alpha>0,$ 
which, combined with the fact that $(v^\delta)$ is also bounded in $L_2(0,T;H^1(\T^d))$
implies that 
$$(v^\delta)\quad\hbox{is bounded in }\  H^{\frac14}(0,T\times \T^d).$$ 
This  entails \eqref{eee} by standard compact Sobolev embedding. 
\medbreak
This is still  not enough to pass to the limit in the pressure term of the  momentum equation. 
To achieve it, we shall exhibit some   strong convergence property for 
the effective viscous flux $G^\delta$.

 {}From  \eqref{e7} and uniform estimates given by the previous sections, one knows that  
 \begin{equation}\label{eeee}
 (G^\delta)\ \hbox{ is bounded in }\  L_{\infty}(0,T;L_2)\cap  L_2(0,T;H^1)\ \hbox{ for all finite }\ T>0,
 \end{equation}
  which already yields weak convergence.
  \smallbreak
   To get strong convergence, one can take advantage of uniform estimates for $(G^\delta_t)$:
    from the previous step, Sobolev embeddings  and  the relation 
$$P^\delta_t=-\div(P^\delta\,v^\delta)-h^\delta\div v^\delta,$$
we gather that $(P^\delta_t)$ is bounded in $L_2(0,T;W^{-1}_p)$ for all finite $T>0$
and $p<\infty$ (or just $p\leq 6$ if $d=3$). 
Furthermore, we also know that  $\sqrt t\,\div \dot v^\delta$ is bounded  in $L_2(0,T;L_2).$ 
Since $\div ( v^\delta \cdot \nabla v^\delta)$ is bounded  in $L_2(0,T;W^{-1}_p)$ (again, use the previous step), 
 one may conclude that   
 $$ \sqrt t\, G^\delta_t\quad\hbox{is bounded in }\ L_2(0,T;W^{-1}_p).$$
 By suitable modification of Lemma 3.2 of [7], we deduce that 
 $$(G^\delta)\ \hbox{ is bounded in }\  H^{\frac12-\alpha}(0,T;W^{-1}_p)\ \hbox{ for all }\ \alpha>0,$$
 and interpolating with \eqref{eeee} allows to get that 
 $(G^\delta)$  is bounded in  $H^{\beta}(0,T\times\T^d)$ for some small enough $\beta>0.$
 So, finally, up to extraction, we have
 \begin{equation}\label{eeeee}
G^\delta\to G\ \hbox{ in } \ L_2(0,T\times\T^d)\ \hbox{ for all }\ T>0.\end{equation}
  
% Hence by the Lions-Aubin theorem we get the strong convergence of $G$ in the $L_p$ spaces (up to a sub-sequence, point-wise a.e.).
%Note  that  it even implies the  strong convergence of the velocity, but
%at this moment we cannot have it for the gradient of the velocity because we are tied by
%the density involved in the effective viscous flux.

We are now in a good position 
to prove the strong convergence of the density. 
After suitable relabelling, the  previous considerations ensure that there exists 
a  sub-sequence $(\rho^n,v^n)_{n\in\N}$ of $(\rho^\delta,v^\delta)$ 
such that, for all $T>0,$
\begin{equation}\label{e25}
\rho^n \rightharpoonup^\ast \rho \mbox{  \  \ in \ }L_\infty(0,T\times\T^d)\andf v^n\to  v
\mbox{  \  \ in \ }L_2(0,T\times\T^d).
\end{equation}
Since for all $n\in\N,$ we have 
\begin{equation}\label{e26}
 \rho^n_t +\div(\rho^n v^n)=0,
\end{equation}
 the limit $(\rho,v)$  fulfills 
\begin{equation}\label{e27}
 \rho_t +\div(\rho v )=0.
\end{equation}
At this point, let us emphasize that, since $\div v\in L_1(0,T;L_\infty)$ 
(another consequence of the uniform estimates provided by the previous step) and 
$\rho\in L_\infty(0,T\times\T^d),$ one can assert that  
$\rho$ is actually a  \emph{renormalized} solution of \eqref{e27} 
(apply Theorem II.2 of \cite{DPL}), and thus fulfills 
\begin{equation}\label{e29}
 (\rho \log \rho)_t +\div(\rho \log \rho\: v)+\rho \,\div v =0.
\end{equation}
Of course, since $(\rho^n,v^n)$ is smooth, we also have
\begin{equation}\label{e28}
 (\rho^n \log \rho^n)_t +\div(\rho^n \log \rho^n \:v^n)+\rho^n \div v^n =0.
\end{equation}
Then, remembering the definition of $G^n,$ we get
\begin{equation}\label{e30}
 (\rho^n \log \rho^n)_t +\div(\rho^n \log \rho^n\: v^n)+ \nu^{-1} \rho^n P(\rho^n) + 
 \nu^{-1} \rho^n G^n=0
\end{equation}
and the limit version
\begin{equation}\label{e31}
 (\rho \log \rho)_t +\div(\rho \log \rho\: v)+\nu^{-1} \rho P(\rho)
 + \nu^{-1} \rho G =0.
\end{equation}
Denote by $\overline{\rho \log \rho}$ and $\overline{\rho P(\rho)}$
the weak limits of $\rho^n \log \rho^n$ and $\rho^n P(\rho^n)$, respectively. 
Since functions $z\mapsto z\log z$ and $z\mapsto zP(z)$ are convex, we 
know that \begin{equation}\label{e32}
 \overline{\rho \log \rho} \geq \rho \log \rho\andf \overline{\rho P(\rho)}\geq \rho P(\rho).
\end{equation}
Furthermore, integrating \eqref{e30} and \eqref{e31} on $[0,T]\times\T^d,$ we find that
$$\displaylines{
\nu\biggl(\int_{\T^d}(\rho^n\log\rho^n-\rho\log\rho)(T)\,dx
-\int_{\T^d}(\rho^n_0\log\rho^n_0-\rho_0\log\rho_0)\,dx\biggr)\hfill\cr\hfill+\int_0^T\!\!\!
\int_{\T^d}(\rho^nP^n-\rho P)\,dx\,dt +\int_0^T\!\!\!\int_{\T^d}(\rho^n G^n-\rho G)\,dx\,dt=0.}
$$
By construction, the term pertaining to the initial data tends to zero. Furthermore, 
since $(G^n)$ converges strongly to $G,$ the last term  also tends to zero. This leads us to 
$$
\nu\int_{\T^d}(\overline{\rho\log\rho}-\rho\log\rho)(T)\,dx
+\int_0^T\!\!\!
\int_{\T^d}(\overline{\rho P}-\rho P)\,dx\,dt=0.
$$ 
Combining with \eqref{e32}, one may now 
 conclude that 
\begin{equation}\label{e34}
 \overline{\rho \log \rho} = \rho \log \rho.
\end{equation}
Since the function $z\mapsto z \log z$ is strictly convex we find by  standard arguments 
that $(\rho^n)$ converges strongly and pointwise to $\rho.$  Hence one can pass to the limit
in all the nonlinear terms (in particular in the  pressure one) of the momentum equation, 
and  conclude that $(\rho,v)$ is indeed a solution to \eqref{CNS}. 

Besides, classical arguments that may be found in \cite{DPL} ensure that
$\rho\in\cC(\R_+;L_p)$ for all $p<\infty,$ and that strong convergence holds true 
in the corresponding space. 
Thanks to that information, since \eqref{eq:energy} is fulfilled with data $(\rho_0^n,v_0^n)$ by 
the sequence $(\rho^n,v^n)_{n\in\N},$ one may pass to the limit
and see that $(\rho,v)$ satisfies \eqref{eq:energy} as well.
Finally, since the internal energy $e$ is continuous with respect to time
(a consequence of the strong convergence of $\rho$), one may reproduce the argument
that has been used in \cite{DM-1} so as to prove 
that   $\sqrt\rho\, v\in \cC(\R_+;L_2).$ 
This completes the proof of our existence theorems in dimensions $2$ and $3.$ \qed
\bigbreak
{\bf Proof of Theorem \ref{thm:asymptotic}.}
We end this section with a fast justification of the convergence of solutions to \eqref{CNS}
to those of \eqref{INS} when $\nu$ goes to $\infty,$ leading to Theorem \ref{thm:asymptotic}.  
As the proof goes along the lines of that of Theorem \ref{thm:global2}, we just indicate the main steps.
The starting point is the estimate provided by Proposition \ref{p:H1a} which ensures
in particular \eqref{eq:div}, that $(\nabla G^\nu)$ is bounded
in $L_2(\R_+\times\T^2)$  and that $(v^\nu)$ is bounded in $L_\infty(\R_+;H^1),$
while Proposition \ref{p:boundrho2} guarantees that $(\rho^\nu)$ is bounded in $L_\infty(\R_+\times\T^2).$
Hence, there exists $(\rho,v)\in L_\infty(\R_+\times\T^2)\times L_\infty(\R_+;H^1)$
and a subsequence $(\rho^n,v^n)$ of $(\rho^\nu,v^\nu)$ such that
$$\rho^n\rightharpoonup^*\rho\ \hbox{ in }\ L_\infty(\R_+\times\T^2)\andf v^n\rightharpoonup v
\ \hbox{ in }\ L_\infty(\R_+;H^1).$$
As in the proof of existence, in order to get some compactness, one may 
look at time weighted estimates. More specifically, we know from Proposition \ref{p:weight} that 
if $\nu\geq\nu_0$ then
 $$ \sup_{t\in[0,T]} \int_{\T^2} \rho |\dot v^\nu|^2 t\,dx +
  \int_{0}^{T} \!\!\!\int_{\T^2} (\mu|\nabla\cP \dot v^\nu|^2 + \nu | \div \dot v^\nu|^2) t \,dx \,dt 
  \leq C_0T\,e^{\frac{C_0T}\nu},$$
and this ensures  that  $(v^\nu)$ is bounded  in, say, $H^{1/4}(0,T\times\T^2)$ for all $T>0.$
Hence, we actually have (extracting one more  subsequence as the case may be), 
$$v^n\to v\quad\hbox{in}\quad L_{2,loc}(\R_+;L_2(\T^2)).$$ 
Next, arguing exactly as in the proof of existence, we get that, for all finite $T>0$ and $p<\infty,$
$$(G^\nu)\ \hbox{ is bounded in }\ L_\infty(0,T;L_2)\cap L_2(0,T;H^1)\andf
(\sqrt t\,G^\nu_t)\ \hbox{ is bounded in }\ L_2(0,T;W^{-1}_p)$$
from which we deduce that
$(G^\nu)$ is bounded in $H^{\frac12-\alpha}(0,T;W_p^{-1})$ for all $\alpha>0$ and, eventually 
$$G^n\to G\quad\hbox{in}\quad L^2_{loc}(\R_+;L_2(\T^2)).$$ 
Putting together all those results of convergence, one gets
$$
\d_t\rho+\div(\rho v)=0\andf
\d_t(\rho v)+\div(\rho v\otimes v)-\mu\Delta\cP v+\nabla G=0.
$$
Since we know in addition (from \eqref{eq:div}) that $\div v=0,$ one can conclude that $(\rho,v,\nabla G)$
satisfies \eqref{INS}.  Finally, from the uniform bounds that are available for $(\rho^\nu,v^\nu),$
one may check that $(\rho,v,\nabla G)$ has the regularity of the solution constructed in Theorem 2.1
of \cite{DM-1}, which is unique. Hence the whole family $(\rho^\nu,v^\nu)$ converges to $(\rho,v).$
\qed

%%%%%%%%%%%%%%%%%%%%%%%%%%%%%%%%%%%%%%%%%%%%%%%%%%%%%%

\section{The proof of uniqueness}\label{s:uniqueness}

 Here we show the uniqueness of the solutions we  constructed in the paper, 
 both in dimensions $2$ and $3.$ The main difficulty we have to face
 is that having $\div v$ and $\nabla\cP v$ in $L_1(0,T;L_\infty)$ (see Corollary \ref{c:c}) \emph{does not} ensure that
 $\nabla v$ is in $L_1(0,T;L_\infty)$ so that, in contrast with our recent work \cite{DFP}, 
 one cannot  reformulate  System \eqref{CNS} in Lagrangian coordinates to prove uniqueness. 
 However, we do have $\nabla v$ is in $L_{r}(0,T;BMO)$ for some $r>1,$  which will turn out 
 to be enough to prove uniqueness \emph{provided that the pressure law is linear}. 
 Actually, we encounter  the same difficulty as   in  D. Hoff's 
 paper  \cite{Hoff2}:    we need, at some point,    to  bound  the difference of the pressures in $H^{-1}$
 by the norm of the difference of the densities  in $H^{-1},$ which is impossible if $P$ is nonlinear.   
 
 %Here is the main statement of this section.
 \begin{prop}\label{p:uniqueness} Assume that  $P(\rho)=a\rho$ for some $a>0,$ 
 and consider two finite energy solutions $(\rho,v)$ and $(\bar\rho,\bar v)$ of \eqref{CNS}
on $[0,T_0]\times\T^d$ ($d=2,3$) with bounded density, satisfying 
\eqref{eq:conservation} and  emanating from the same initial data. 
If, in addition, $v$ and $\bar v$ are in $L_\infty(0,T_0;H^1),$
$\sqrt t\,\nabla \dot v$ and $\sqrt t\,\nabla\dot{\bar v}$ are in $L_2(0,T_0\times\T^d),$
$\sqrt{\rho t}\,\dot v,\,\sqrt{\bar\rho t}\,\dot{\bar v}$ belong to  $L_\infty(0,T_0;L_2),$
\begin{equation}
\label{r-0}
\nabla\bar v \in L_2(0,T_0;L_3(\T^d))
\andf \int_0^{T_0}(1+|\log t|)\|\nabla\bar v(t)\|_{BMO}\,dt <\infty,\end{equation}
then $(\bar\rho,\bar v)\equiv(\rho,v)$ on $[0,T_0]\times\T^d.$
\end{prop}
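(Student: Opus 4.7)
The plan is to set $\dr := \rho-\bar\rho$ and $\dv := v-\bar v$ (both vanishing at $t=0$) and to close an Osgood-type inequality for
$$\cN(t) := \tfrac12\|\dr(t)\|_{\dot H^{-1}(\T^d)}^2 + \tfrac12\|\sqrt{\rho(t)}\,\dv(t)\|_{L_2}^2,$$
together with the parabolic dissipation of $\dv$, which by the Osgood lemma will force $\cN\equiv 0$ on $[0,T_0]$. The choice of $\dot H^{-1}$ on the density and $L_2$ on the momentum follows the Hoff scheme alluded to in the introduction, and is precisely what makes the linearity of $P$ crucial: with $P(\rho)=a\rho$ one has $\delta P = a\dr = a\Delta\phi$, where $\phi := \Delta^{-1}\dr$ is mean-zero, and this Laplacian structure allows the pressure source to be absorbed in the $\dot H^{-1}$ framework.

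I would first derive the $\dot H^{-1}$-bound for $\dr$. Testing the difference continuity equation $\d_t\dr+\div(\rho\dv+\dr\,\bar v)=0$ against $\phi$ and integrating by parts yields
$$\tfrac12\tfrac d{dt}\|\dr\|_{\dot H^{-1}}^2 = -\!\int\rho\,\dv\cdot\na\phi\,dx -\tfrac12\!\int|\na\phi|^2\div\bar v\,dx + \!\int(\na\phi\otimes\na\phi):\na\bar v\,dx.$$
The first two terms are harmless, being bounded respectively by $\rho^*\|\dr\|_{\dot H^{-1}}\|\dv\|_2$ and $\tfrac12\|\div\bar v\|_\infty\|\dr\|_{\dot H^{-1}}^2$, with $\|\div\bar v\|_\infty\in L_{r,loc}(\R_+)$ for some $r>1$ by Corollary~\ref{c:c}. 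Next I would test the difference momentum equation
$$\rho(\d_t\dv+v\cdot\na\dv)-\mu\Delta\dv-(\lambda+\mu)\na\div\dv + a\na\dr = -\dr\,\dot{\bar v}-\rho(\dv\cdot\na)\bar v$$
against $\dv$; the continuity equation for $\rho$ handles the material derivative and produces
$$\tfrac12\tfrac d{dt}\|\sqrt\rho\,\dv\|_2^2+\mu\|\na\dv\|_2^2+(\lambda+\mu)\|\div\dv\|_2^2 = a\!\int\dr\div\dv - \!\int\dr\,\dot{\bar v}\cdot\dv - \!\int\rho(\dv\cdot\na\bar v)\cdot\dv.$$
Linearity of $P$ enters in the pressure source: writing $\dr=\Delta\phi$ and integrating by parts turns $a\int\dr\div\dv$ into $-a\int\na\phi\cdot\na\div\dv$, which via the viscous flux decomposition $\na\div\dv=\nu^{-1}\na(\delta G+a\dr)$ is absorbable by $\tfrac\mu4\|\na\dv\|_2^2+C\|\dr\|_{\dot H^{-1}}^2$. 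The cross term $\int\dr\,\dot{\bar v}\cdot\dv$ is controlled using the weighted bound $\sqrt t\,\na\dot{\bar v}\in L_2(L_2)$ together with Gagliardo-Nirenberg and Poincar\'e, which introduces a $|\log t|$ factor (whence the corresponding weight in \eqref{r-0}); the advective term is bounded by $\|\na\bar v\|_3\|\dv\|_2\|\dv\|_6$ and absorbed into the dissipation, relying on $\na\bar v\in L_2(L_3)$ from \eqref{r-0}.

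The hard part is the final term of the first estimate, $\int(\na\phi\otimes\na\phi):\na\bar v\,dx$: since $\na\bar v$ lies only in $L_{r,loc}(BMO)$ and not in $L_1(L_\infty)$, the naive $L^1$-$L^\infty$ pairing fails. The resolution is the logarithmic $BMO$-interpolation inequality from~\cite{M10}, which for mean-zero $\phi\in H^2(\T^d)$ and $w\in BMO(\T^d)$ yields
$$\Bigl|\!\int(\na\phi\otimes\na\phi):\na w\,dx\Bigr|\lesssim \|w\|_{BMO}\,\|\na\phi\|_2^2\,\log\!\Bigl(e+\tfrac{\|\na^2\phi\|_2}{\|\na\phi\|_2}\Bigr).$$
Elliptic regularity provides $\|\na^2\phi\|_2\lesssim\|\dr\|_2\lesssim\rho^*+\bar\rho^*$, so the logarithm is of order $\log(e+\cN^{-1})$. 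Assembling all the estimates produces
$$\tfrac d{dt}\cN \leq f(t)\,\cN + g(t)\,\cN\,\log\!\bigl(e+\cN^{-1}\bigr),$$
with $f\in L_1(0,T_0)$ and $g(t)\lesssim(1+|\log t|)\|\na\bar v(t)\|_{BMO}\in L_1(0,T_0)$ thanks to \eqref{r-0}. Since $\cN(0)=0$ and $r\mapsto r\log(e+1/r)$ is a bona fide Osgood modulus, the Osgood lemma forces $\cN\equiv 0$, whence $(\bar\rho,\bar v)\equiv(\rho,v)$ on $[0,T_0]$. Linearity of $P$ is indispensable throughout: for a nonlinear pressure, $\delta P$ cannot be written as a Laplacian nor controlled in $\dot H^{-1}$ by $\|\dr\|_{\dot H^{-1}}$, so the $\dot H^{-1}$/$L_2$ coupling between density and velocity collapses, which is exactly the barrier behind the authors' conjecture for general convex $P$.
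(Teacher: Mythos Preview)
Your overall architecture (work with $\dr$ in $\dot H^{-1}$, use the logarithmic $\mathcal H^1$--$BMO$ duality from \cite{M10} for the quadratic term in $\nabla\phi$, and close via Osgood) matches the paper. The gap is in the momentum estimate: testing the difference equation directly by $\dv$ does not yield enough regularity to close.

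Concretely, two terms fail. First, the pressure source: after writing $\dr=\Delta\phi$ you get $a\int\dr\,\div\dv=-a\int\nabla\phi\cdot\nabla\div\dv$, and to pair this with $\|\nabla\phi\|_2$ you need $\nabla\div\dv\in L_2$, i.e.\ second derivatives of $\dv$. Your proposed ``viscous flux decomposition'' $\nabla\div\dv=\nu^{-1}\nabla(\delta G+a\dr)$ is not usable: $\nabla\delta G$ involves $\rho\,\partial_t\dv$ (through the momentum equation), which you do not control, and $\nabla\dr$ does not even make sense for merely bounded densities. The alternative bound $|a\int\dr\,\div\dv|\le a\|\dr\|_2\|\div\dv\|_2$ only gives an $O(1)$ right-hand side, since $\|\dr\|_2$ is bounded but not small. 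Second, the cross term $\int\dr\,\dot{\bar v}\cdot\dv$: to see $\|\dr\|_{\dot H^{-1}}$ you must bound $\|\nabla(\dot{\bar v}\cdot\dv)\|_2$, which requires $\dv\in L_2(0,T;L_\infty)$ and $\nabla\dv\in L_2(0,T;L_3)$. Neither follows from the energy dissipation $\nabla\dv\in L_2(0,T;L_2)$, in either dimension. (Incidentally, the $|\log t|$ weight in \eqref{r-0} does not originate here; it comes from the Osgood step on the density, via $Z(t)=\sup_{\tau\le t}\tau^{-1/2}\|\dr(\tau)\|_{\dot H^{-1}}$.)

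The paper resolves both issues by a \emph{duality argument}: it introduces the backward parabolic system $\rho w_t+\rho v\cdot\nabla w+\mu\Delta w+(\lambda+\mu)\nabla\div w=-\rho\,\dv$ with $w|_{t=T}=0$, derives the $H^2$-type bound $\|\nabla^2 w\|_{L_2(0,T;L_2)}+\|\nabla\div w\|_{L_2(0,T;L_2)}\lesssim\|\sqrt\rho\,\dv\|_{L_2(0,T;L_2)}$, and then tests the difference momentum equation by $w$ rather than $\dv$. The extra derivative on $w$ is exactly what handles $\int\dr\,\div w$ (via $\|\nabla\div w\|_{L_2}$) and $\int\dr\,\dot{\bar v}\cdot w$ (via $w\in L_2(L_\infty)$ and $\nabla w\in L_2(L_3)$). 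This produces a closed inequality for $\|\sqrt\rho\,\dv\|_{L_2(0,T;L_2)}$ in terms of $Z$, which combined with your $\dot H^{-1}$ estimate for $\dr$ gives the Osgood inequality. Without this duality step (or an equivalent mechanism supplying $H^2$ information on the test function), the scheme does not close.
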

\begin{proof} The general scheme of the proof is the same in dimensions $2$ or $3.$
Assume that $a=1$ for notational simplicity and 
consider two solutions $(\rho,v)$ and $(\bar \rho,\bar v)$ to \eqref{CNS}
corresponding to  the same initial data $(\rho_0,v_0)$.
The system for the difference
$$
 \dr: = \rho -\bar \rho \andf \dv:= v -\bar v
$$
reads 
\begin{equation}\label{r0}
\left\{ \begin{array}{l}
\displaystyle  \dr_t + \div(\dr\,\bar v + \rho \dv)=0, \\[7pt]
\displaystyle  \rho \dv_t + \rho v \cdot \nabla \dv - \mu\Delta \dv - (\lambda+\mu) \nabla \div \dv +  \nabla \dr= \dr \,\dot{\bar v} + \rho \dv \cdot\nabla\bar v.
 \end{array}\right.
\end{equation}

In order to show that $\dr\equiv0$ and $\dv\equiv0,$   we shall  perform  estimates in
 $L_\infty(0,T;\dot H^{-1})$ for $\dr,$
and  in $L_2(0,T\times\T^d)$ for $\sqrt\rho\,\dv.$ 
To this end, we set $\phi:=-(-\Delta)^{-1}\dr$ (which makes sense, since $\int_{\T^d}\dr\,dx=0$) so that 
\begin{equation}\label{r000}
 \|\nabla \phi\|_{2} = \|\dr\|_{\dot H^{-1}}=\|\dr\|_{H^{-1}}.
\end{equation}
Now, testing the first equation of \eqref{r0} by $\phi$ yields
$$
\frac12 \frac{d}{dt} \|\nabla \phi\|_2^2 \leq \bigg\vert\int_{\T^d}  (\bar v\cdot\nabla\phi \,\dr 
 + \rho\, \dv\cdot\nabla \phi)\,dx\bigg\vert\cdotp 
$$
The last term is  bounded as follows:
$$
 \biggl|\int_{\T^d} \rho \,\dv\cdot \nabla \phi\, dx\biggr|\leq 
 \sqrt{\rho^*}\, \|\sqrt\rho\,\dv\|_2\|\nabla \phi\|_2.
$$
Regarding  the first one, observe that (with the usual summation convention)
$$
 \int_{\T^d}  \bar v \cdot \nabla \phi \,\dr\,dx =
 \int_{\T^d}\bar v^j\d_j\phi\,\Delta\phi\,dx
 =- \int_{\T^d} \d_k\bar v^j\,\d_j\phi\,\d_k\phi\,dx
 +\frac12\int_{\T^d}\div\bar v\,|\nabla\phi|^2\,dx.
 $$
 Hence, we have 
$$
 \biggl |\int_{\T^d}  \bar v \cdot \nabla \phi \,\dr \, dx \biggr| 
 \leq C\|\nabla\bar v\|_{\rm BMO} \|\nabla \phi\otimes\nabla\phi\|_{\mathcal{H}^1}.
$$
Now, in light of the following inequality (see e.g. \cite[Thm. D]{M10}), 
\begin{equation}\label{eq:log}
 \|f\|_{\mathcal{H}^1} \leq C\|f\|_{1} ( |\log \|f\|_1| + \log (\e+\|f\|_\infty)),
\end{equation}
we discover that
\begin{equation}\label{r00}
 \biggl |\int_{\T^d}  \bar v \cdot \nabla \phi \,\dr \, dx \biggr|  \leq C\|\nabla\bar v\|_{\rm BMO}\|\nabla \phi\|_{2}^2 \bigl(|\log \|\nabla\phi\|_2^2| + \log(\e+\|\nabla\phi\|_\infty^2)\bigr)\cdotp
\end{equation}
Since the densities are bounded by $\rho^*$,  we have
$$
 \|\nabla \phi(t)\|_{\infty}\leq C\rho^* \mbox{ \ \ for all \  } t\in[0,T_0].
$$
Hence Inequality \eqref{r00} implies that for some constant $C$ depending only on $\rho^*,$
\begin{equation}\label{r-3}
 \frac12\frac{d}{dt} \|\nabla \phi\|_2^2 \leq C\Bigl( \|\sqrt\rho\,\dv\|_2 + \|\nabla\bar v\|_{\rm BMO}\|\nabla \phi\|_{2}(1+|\log\|\nabla\phi\|_2|)\Bigr)\|\nabla\phi\|_2\cdotp
\end{equation}
Since $\nabla\phi(0)=0,$   this gives after integration that for all $t\in[0,T_0],$ 
\begin{equation}\label{r-4}
 \|\nabla \phi(t)\|_2 \leq C\biggl(\int_0^t\|\sqrt\rho\,\dv\|_2\,d\tau +
 \int_0^t \|\nabla\bar v\|_{\rm BMO}\|\nabla \phi\|_{2}(1+|\log\|\nabla\phi\|_2|)\,d\tau\biggr)\cdotp
\end{equation}
%{\bf R. And here we have a problem : I think that it implies the below
%inequality with a smaller power than $1/2$ if applying Osgood lemma. }
%$$\begin{aligned}
 %\|\nabla \phi(t)\|_2& \leq C\log(\e+{\rho^*}) \int_0^t e^{ C\log(\e+{\rho^*})\int_s^t\|\nabla \bar v\|_{\rm BMO}\, d\tau}  \|(\sqrt\rho\,\dv)(s)\|_2\, ds \\
 %&\leq C_{0,T}\, t^{1/2} \left( \int_0^t \|(\sqrt\rho\,\dv)(s)\|_2^2\,ds \right)^{1/2},
%\end{aligned}$$
%with $C_{0,T}$ depending only on $\rho^*,$ $T$ and $\|\nabla\bar v\|_{L_1(0,T;{\rm BMO})}.$
%\smallbreak
%This leads to the following relation between $\dr$  and $\dv$:
Hence, using \eqref{r000} and denoting $Z(t):=  \sup_{\tau\leq t} \tau^{-1/2}  \|\dr(\tau)\|_{\dot H^{-1}},$ we
get after using Cauchy-Schwarz inequality, for all $T\in[0,T_0],$ 
\begin{equation}\label{r-10}
Z(T) \leq C\biggl(\sup_{t\in[0,T]} \int_0^t \|\nabla\bar v\|_{\rm BMO}\,  Z(1+|\log\tau|+|\log Z|)\,d\tau
+\|\sqrt\rho\,\dv\|_{L_2(0,T\times\T^d)}\biggr)\cdotp
\end{equation}

In order to control the difference of the velocities, we introduce the solution $w$ to the 
following backward parabolic system: 
\begin{equation}\label{r1}
\left\{\begin{array}{l} \rho w_t + \rho v \cdot \nabla w + \mu\Delta w + (\lambda\!+\!\mu) \nabla \div w = -\rho \dv,\\[2pt]
w|_{t=T}=0.\end{array}\right.
\end{equation}
Solving the above system is not part of the classical 
theory for linear parabolic systems, as the coefficients are rough and may vanish. 
However, if $\rho$ and $v$ are regular with $\rho$ bounded away from zero, this is
 well known, and the case we are interested may be achieved by a regularizing process
of $\rho$ and $v,$ after using  Inequality \eqref{r5} below for the corresponding regular solutions. 
\medbreak
Now, testing the equation by $w,$ we find that 
\begin{equation}
 \sup_{t \in (0,T)} \int_{\T^d} \rho |w|^2\, dx  
 +\int_0^T\!\!\!\int_{\T^d}\bigl(\mu|\nabla\cP w|^2+\nu(\div w)^2\bigr)\,dx\,dt \leq  \int_0^T\!\!\!\int_{\T^d} \rho |\dv|^2 \,dx\,dt.
\end{equation}

Next, we test \eqref{r1} by $w_t$  and take advantage of the usual elliptic estimates given by 
$$
\mu\Delta w+(\lambda+\mu)\nabla\div w=-\rho\dot w-\rho\dv
$$
that ensure that 
$$
\mu^2\|\nabla^2\cP w\|_2^2+\nu^2\|\nabla\div w\|_2^2=\|\rho\dot w+\rho\dv\|_2^2
\leq\rho^*\|\sqrt\rho\,(\dot w+\dv)\|_2^2
$$
in order to get 
\begin{multline}\label{r2}
 \sup_{t\in (0,T)} \int_{\T^d}\bigl( \mu|\nabla\cP w(t)|^2 \!+\! \nu(\div w(t))^2\bigr)\,dx\\ + \int_0^T\!\!\! \int_{\T^d} \biggl(\rho |w_t|^2
 \!+\!\frac{\mu^2}{6\rho^*}|\nabla^2\cP w|^2\!+\!\frac{\nu^2}{6\rho^*}|\nabla\div w|^2\biggr)dx\,dt 
\leq \frac32\int_0^T\!\!\!\int_{\T^d} \bigl(\rho |\dv|^2 +\rho |v\cdot \nabla w|^2\bigr)\,dx\,dt.
\end{multline}
If $d=2$ then we bound the last term as follows:
$$
\begin{aligned}
 \int_{\T^2} \rho |v\cdot \nabla w|^2 dx  &\leq \sqrt{\rho^*} \|\rho^{1/4} v\|^2_4  \|\nabla w\|^2_4  \\
 &\leq C\sqrt{\rho^*} \|\rho^{1/4} v\|^2_4  \|\nabla w\|_2  \| \nabla^2 w\|_2 \\ 
 &\leq C\frac{(\rho^*)^2}{\mu^2} \|\rho^{1/4} v\|^4_4  \|\nabla w\|_2^2  + \frac{\mu^2}{12\rho^*} \|\nabla^2 w\|^2_2.
\end{aligned}
$$
If $d=3,$ then we rather write that
$$
\begin{aligned}
 \int_{\T^3} \rho |v\cdot \nabla w|^2 dx  &\leq (\rho^*)^{3/4} \|\sqrt \rho\, v\|^{1/2}_2 \|\nabla v\|_2^{3/2} \|\nabla w\|^{1/2}_2  \|\nabla^2 w\|^{3/2}_2 \\
 &\leq C(\rho^*)^3\|\sqrt\rho\, v\|^2_2 \|\nabla v\|_2^6 \|\nabla w\|_2^2  + \frac{\mu^2}{12\rho^*} \|\nabla^2 w\|^2_2.
\end{aligned}
$$
Hence, using  the properties of regularity of $v$,  plugging the above inequality in \eqref{r2}, then resorting to Gronwall inequality, we get 
\begin{multline}\label{r5}
 \sup_{t\in (0,T)} \int_{\T^d}\bigl( \rho|w|^2 + \mu |\nabla\cP w|^2 + \nu (\div w)^2\bigr)dx \\
 +\int_0^T \!\!\!\int_{\T^d}\bigl( \mu|\nabla\cP w|^2 +\nu(\div w)^2 +
\mu^2 |\nabla^2\cP w|^2 + \nu^2 |\nabla \div w|^2 + \rho|w_t|^2\bigr) \,dx\,dt 
 \\\leq C_T \int_0^T\!\!\! \int_{\T^d} \rho |\dv|^2 \,dx\,dt,
\end{multline}
with $C_T$ depending only on the norms of the two solutions on $[0,T].$
\medbreak
Let us next test  \eqref{r0} by $w.$ We get
\begin{equation}\label{r4}
 \int_0^T \!\!\!\int_{\T^d} \rho |\dv|^2 \,dx\,dt -   \int_0^T \!\!\!\int_{\T^d} \dr\, \div w \,dx\,dt 
 \leq \int_0^T\!\!\! \int_{\T^d}\bigl( \dr\,\dot{\bar v}\cdot w + \rho (\dv\cdot \nabla\bar v)\cdot  w\bigr) \,dx\,dt.
\end{equation}

One can bound the first term of the right-hand side as follows:
% (using the  fact that $\|\dr\|_{\dot H^{-1}}=\|\dr\|_{H^{-1}}$ since $\int_{\T^2}\dr\,dx=0$):
$$\begin{aligned}
\biggl| \int_0^T \!\!\!\int_{\T^d} \dr\,\dot{\bar v}\cdot w\,dx\,dt \biggr|&=
\biggl| \int_0^T \!\!\!\int_{\T^d} t^{-1/2} \dr\: t^{1/2}\dot{\bar v}\cdot w\,dx\,dt \biggr|\\
&\leq  \| t^{-1/2} \dr\|_{L_\infty(0,T;\dot H^{-1})} \| t^{1/2}\nabla(\dot{\bar v}\cdot w)\|_{L_1(0,T;L_2)}\\
&\leq \| t^{-1/2}\dr \|_{L_\infty(0,T;\dot H^{-1})} \bigl(\|\sqrt{t} \nabla \dot{\bar v}\|_{L_2(0,T;L_2)}\|w\|_{L_2(0,T;L_\infty)} \\
&\hspace{5cm}+\|\sqrt{t}\, \dot{\bar v}\|_{L_{2}(0,T;L_{6})} \|\nabla w \|_{L_2(0,T;L_{3})}\bigr)\cdotp
\end{aligned}$$
%where $(q_1,q_2)\in(2,+\infty)^2$ are chosen so that  $\frac12=\frac1{q_1}+\frac{1}{q_2}\cdotp$
%\medbreak
For bounding the last term of \eqref{r4}, one can  just use the fact that
$$
 \int_0^T \!\!\!\int_{\T^d} \rho (\dv \cdot \nabla\bar v)\cdot w \,dx\,dt \leq \sqrt{ \rho^*} \|\sqrt {\rho}\, \dv\|_{L_2(0,T;L_2)}
 \|\nabla\bar v\|_{L_2(0,T;L_3)} \|w\|_{L_\infty(0,T;L_6)}.
 %\\&\leq C  \|\nabla \bar v\|_{L_4(0,T;L_4)} T^{1/4} \int_0^T\!\!\! \int_{\T^2} \rho |\dv|^2 \,dx\,dt, \end{aligned}
 $$
Finally, we note that 
$$
 \int_0^T\!\!\! \int_{\T^d} \dr \,\div w \,dx \,dt \leq  T \|t^{-1/2}\dr\|_{L_\infty (0,T;\dot H^{-1} )} \|\nabla\div w\|_{L_2(0,T;L_2)}.
%&\leq C_0(T) T^{1/2}\nu^{-1} \|\dr\|_{L_\infty (0,T;H^{-1} )} \|\sqrt{\rho} \dv\|_{L_2(0,T.L_2)}.
$$
Plugging the above three inequalities in \eqref{r4}, we get
\begin{multline}\label{r-9}
\int_0^T\!\!\! \int_{\T^d} \rho |\dv|^2 \,dx\,dt \leq
\|t^{-1/2}\dr\|_{L_\infty(0,T;\dot H^{-1})}\Bigl( T \|\nabla\div w\|_{L_2(0,T;L_2)} \\
+ C\|\sqrt{t} \nabla \dot{\bar v}\|_{L_2(0,T;L_2)}\|w\|_{L_2(0,T;L_\infty)} 
+C\|\sqrt{t} \dot{\bar v}\|_{L_{2}(0,T;L_{6})} \|\nabla w \|_{L_2(0,T;L_{3})}\Bigr)
\\
+C\|\sqrt\rho\,\dv\|_{L_2(0,T;L_2)}\|\nabla\bar v\|_{L_2(0,T;L_3)}\|w\|_{L_\infty(0,T;L_6)}.
\end{multline}

Observe  that our assumptions on $\bar v$ guarantee that we have
\begin{equation}
 \|\sqrt{t} \nabla \dot{\bar v}\|_{L_2(0,T;L_2)}+ \|\sqrt{t}\, \dot{\bar v}\|_{L_{2}(0,T;L_{6})} + \|\nabla\bar v\|_{L_2(0,T;L_3)} \leq C_{T}.
\end{equation}
%Indeed, the first and last terms just above are bounded by means of \eqref{eq:weight} and \eqref{w22}, respectively, and Sobolev embedding.
%For the second one,   this is $\dot H^1(\T^3)\hookrightarrow L_6(\T^3)$ and, if $d=2,$ the consequence of 
%$$ \|\sqrt{t} \,\dot{\bar v}\|_{L_{2}(0,T;L_{6})} \lesssim  \|\sqrt{t}\, \dot{\bar v}\|_{L_{2}(0,T;L_4)}^{2/3}
 %\|\sqrt{t}\nabla\dot{\bar v}\|_{L_{2}(0,T;L_2)}^{1/3}, $$ 
%then \eqref{eq:weight} and \eqref{eq:dotv4}.\medbreak
Next, we  have to bound the terms containing $w$ in \eqref{r-9} by means of the data.  
Since  $\int_{\T^d} \rho w\,dx$ need not be zero,  Poincar\'e inequality \eqref{eq:poincarep} becomes
$$\|w\|_2\leq \biggl|\int_{\T^d}\rho w\,dx\biggr| + \rho^*\|\nabla w\|_2.$$
To bound the mean value of $\rho w,$ we note that integrating \eqref{r1} on $[t,T]\times\T^d$ readily gives
$$
 \int_{\T^d} (\rho w)(t,x) \, dx = \int_t^T\!\!\!\int_{\T^d} (\rho\,\dv)(\tau,x) \, dx\,d\tau.
 $$
 Therefore we have 
$$ \left| \int_{\T^d} (\rho w)(t) \, dx \right| \leq \sqrt{\rho^*}\, T^{1/2} \|\sqrt\rho\,\dv\|_{L_2(0,T;L_2)}\quad\hbox{for all }\ t\in[0,T],
$$
whence
\begin{equation}\label{r-11}
 \|w(t)\|_2 \leq C_{\rho^*}\bigl(\|\nabla w(t)\|_2 + T^{1/2} \|\sqrt\rho\,\dv\|_{L_2(0,T;L_2)}) \mbox{ \ \ for all \ } t\in [0,T].
\end{equation}
Then, combining with \eqref{r5}, we end up with 
$$
 \|w\|_{L_2(0,T;H^1)} \leq C_{0,T} T^{1/2} \|\sqrt\rho\,\dv\|_{L_2(0,T;L_2)}\andf
  \|\nabla w\|_{L_2(0,T;\dot H^1)}\leq C_{0,T}  \|\sqrt\rho\,\dv\|_{L_2(0,T;L_2)}.
$$
By interpolation and Sobolev embedding, it  follows that  for small enough $\varepsilon$ if $d=2$
(and $\varepsilon=1/4$ if $d=3$), we have
\begin{equation}
 \|w\|_{L_2(0,T;L_\infty)} \leq C_\varepsilon T^{1/2-\varepsilon}  \|\sqrt\rho\,\dv\|_{L_2(0,T;L_2)}.
\end{equation}
Likewise,  we have
$$\begin{aligned}
\|\nabla w\|_{L_2(0,T;L_{3}(\T^2))}& \lesssim \|\nabla w\|_{L_2(0,T;L_2(\T^2))}^{2/3}\|\nabla^2w\|_{L_2(0,T;L_2(\T^2))}^{1/3}\\
\andf \|\nabla w\|_{L_2(0,T;L_{3}(\T^3))}&\lesssim \|\nabla w\|_{L_2(0,T;L_2(\T^3))}^{1/2}\|\nabla^2w\|_{L_2(0,T;L_2(\T^3))}^{1/2},\end{aligned}$$
whence 
\begin{equation}
 \|\nabla w\|_{L_2(0,T;L_{3})} \leq C T^{\alpha}\|\sqrt\rho\,\dv\|_{L_2(0,T;L_2)}\with 
 \alpha=1/3\hbox{ if } d=2,\quad
 \alpha=1/4\hbox{ if } d=3.
\end{equation}
Finally,  using once more that  $H^1(\T^d)\hookrightarrow L_6(\T^d)$ for $d=2,3,$ we get
after plugging all the above inequalities  in  \eqref{r-9}, for all $T\in[0,T_0],$ 
$$
\|\sqrt\rho \dv\|^2_{L_2(0,T;L_2)}\leq  CT^{1/3}\Bigl(\|t^{-1/2}\dr\|_{L^\infty(0,T;\dot H^{-1})}
 \|\sqrt\rho \dv\|_{L_2(0,T;L_2)}  + \|\sqrt\rho \dv\|_{L_2(0,T;L_2)}^2\Bigr)\cdotp
 $$
 Clearly, the above inequality implies that, if $T$ is small enough then 
 \begin{equation}\label{r-12}
 \|\sqrt\rho \dv\|_{L_2(0,T;L_2)}\leq   CT^{1/3} Z(T).
 \end{equation}
 Plugging that inequality in \eqref{r-10} and assuming that $T$ is small enough, we obtain 
 $$
 Z(t)\leq C_T\int_0^t\bigl(1+\|\nabla\bar v(\tau)\|_{BMO}\,(1+|\log\tau|+|\log Z(\tau)|)\bigr)
Z(\tau) \,d\tau
 \quad\hbox{for all } \ t\in[0,T].
 $$
 Then, Osgood lemma (see e.g. \cite[Lem. 3.4]{BCD}) implies that $Z\equiv0$ on $[0,T],$ and thus, 
 owing to \eqref{r-12}, that $\sqrt \rho\,\dv\equiv0$ on $[0,T].$ 
 %Then, one can combine with Inequality \eqref{r-10} and 
 %conclude that for small enough $T,$ we  have 
 %\begin{equation}
 %\int_0^T\!\!\! \int_{\T^d} \rho |\dv|^2 \,dx\,dt =0.
%\end{equation}
%Reverting to \eqref{r-10} enables us to conclude to  $\dr \equiv 0,$ too,  on $[0,T].$ 
\smallbreak
Now, since $\sqrt\rho\,\dv$ and $\dr$ are zero, the second equation of \eqref{r0} becomes
$$
 \rho \dv_t + \rho v \cdot \nabla \dv - \mu\Delta \dv - (\lambda+\mu) \nabla \div \dv =0,
$$
which implies that 
$$
\frac12 \|\sqrt\rho \dv\|^2_{L_\infty(0,T;L_2)}+\int_0^T\bigl(\mu\|\nabla\cP\dv\|_2^2+\nu\|\div\dv\|_2^2\bigr)\,dx=0.
$$
Since $\int_{\T^d}\rho\,\dv\,dx=0,$  one gets (in light of Inequality \eqref{eq:poincarep})
that $\dv=0$  on $[0,T],$ which completes the proof of uniqueness.
\medbreak
To complete the proof of Theorem \ref{thm:uniqueness}, it suffices to observe
that Inequality \eqref{c1x} implies  Assumption \eqref{r-0} in Proposition \ref{p:uniqueness}. 
\end{proof}
%\medbreak
%We end the section with  a rapid study of the large bulk viscosity asymptotics, 
%which turns out to be  an easy variation  on Step 4 of the proof of existence. 
%Here is the statement we want to prove.
%\begin{thm}\label{thm:limit} Let the assumptions of Theorem \ref{thm:global2} be in force, uniformly with respect to $\nu,$
%and denote by $(\rho^\nu,v^\nu)$ the solution constructed therein. 
%Then there exists $(\rho,v)$ so that $(\rho^\nu,v^\nu)$ converges strongly to $(\rho,v)$
%in $L_\infty(0,T;L_p)\times L_2(0,T;L_2)$ for all $p<\infty$ and $T>0,$ where
%$(\rho,v)$ stands for the unique solution of \eqref{INS} (for some appropriate $\nabla\Pi$) supplemented
%with initial data $(\rho_0,v_0).$\end{thm}

%%%%%%%%%%%%%%%%%%%%%%%%%%%%%%%%%%%%

\begin{appendix}
\section{Some  inequalities}
The following Osgood type lemma has  been used in Section \ref{s:reg}.
\begin{lem}\label{l:osgood}
Let $f$ and $g$ be two locally integrable nonnegative functions on $\R_+,$ and 
assume that the a.e. differentiable function $X:\R_+\to\R_+$ satisfies
$$
X'\leq f\,X\log(A+BX)+g\,X\quad\hbox{for some }\ A\geq1\ \hbox{ and }\ B\geq0.
$$
Then we have for all $t\geq0,$
$$
A+BX(t)\leq \Bigl(A+Be^{\int_0^tg\,d\tau}X(0)\Bigr)^{\exp\int_0^tf\,d\tau}\cdotp
$$
\end{lem}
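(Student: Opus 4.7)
The plan is to derive a sharp integral identity from the mass equation, and then establish the pointwise bound $V(t) \leq e^{F(t)} \log R(t)$ by a bootstrap argument exploiting the monotonicity of $R$. Here I write $W(t) := A + BX(t)$, $V(t) := \log W(t)$, $F(t) := \int_0^t f\,d\tau$, $G(t) := \int_0^t g\,d\tau$, and
\[
R(t) := A + BX(0)\,e^{G(t)},
\]
so that $R(0) = W(0)$ and $R' = g(R - A) \geq 0$. The hypothesis reads $X'/X \leq f\log W + g = fV + g$ almost everywhere on $\{X > 0\}$ (the case $X(0) = 0$ forces $X \equiv 0$ by direct inspection). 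Integrating this between $0$ and $t$ and exponentiating gives $BX(t) \leq BX(0)\,e^{G(t)}\,e^{\psi(t)}$ with $\psi(t) := \int_0^t fV\,d\tau$; adding $A$ yields the key identity
\[
W(t) \leq A + (R(t) - A)\,e^{\psi(t)}.
\]

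Next I would run a bootstrap argument. Suppose for contradiction that the claim $V \leq e^F \log R$ fails somewhere and set
\[
T := \sup\bigl\{t \geq 0\ :\ V(\tau) \leq e^{F(\tau)}\log R(\tau)\ \text{for all}\ \tau \in [0, t]\bigr\} < \infty,
\]
so that by continuity $V(T) = e^{F(T)} \log R(T)$. Since $\log R$ is nondecreasing, the bootstrap hypothesis yields
\[
\psi(T) \leq \int_0^T f(\tau)\,e^{F(\tau)}\log R(\tau)\,d\tau \leq \log R(T)\int_0^T\bigl(e^{F(\tau)}\bigr)'\,d\tau = (e^{F(T)} - 1)\log R(T),
\]
hence $e^{\psi(T)} \leq R(T)^{e^{F(T)} - 1}$. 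Plugging this into the identity above,
\[
W(T) \leq A + (R(T) - A)\,R(T)^{e^{F(T)} - 1} = R(T)^{e^{F(T)}} - A\bigl(R(T)^{e^{F(T)} - 1} - 1\bigr) \leq R(T)^{e^{F(T)}},
\]
where the final inequality uses $A \geq 1$, $R(T) \geq A \geq 1$ and $e^{F(T)} \geq 1$, which together force $R(T)^{e^{F(T)} - 1} \geq 1$. In the generic regime $R(T) > 1$ and $F(T) > 0$ this inequality is strict, contradicting $V(T) = e^{F(T)}\log R(T)$. Exponentiating $V \leq e^F \log R$ then gives the lemma.

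The main obstacle is that the naive linear-Gronwall bound obtained from $V' \leq fV + g$ (which follows from $BX/W \leq 1$) produces only $V(t) \leq e^{F(t)} V(0) + \int_0^t g\,e^{F(t) - F(\tau)}\,d\tau$, and this is strictly weaker than the claim as soon as $G(t) > 0$; for instance if $f \equiv 0$ it gives $e^{G(t)}$ multiplying $A$ inside the base of the power instead of leaving $A$ unchanged. The sharpening comes from working with the mass-type identity $X'/X \leq fV + g$ rather than with $V'$, so that the factor $BX/W = 1 - A/W$ is preserved in the final estimate; the monotonicity of $\log R$ then allows one to pull it outside the integral via $\int_0^T fe^F\,d\tau = e^{F(T)} - 1$, which is exactly what produces the exponent $e^F - 1$ needed to conclude. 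The two degenerate subcases, $X(0) = 0$ (forcing $X \equiv 0$) and $F \equiv 0$ up to $T$ (giving equality throughout with $\psi(T) = 0$), can be handled directly or alternatively by regularizing $f$ to $f + \varepsilon$ and sending $\varepsilon \to 0$.
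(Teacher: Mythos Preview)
Your approach is correct in spirit and genuinely different from the paper's. The paper eliminates $g$ by the substitution $Y:=e^{-\int_0^t g\,d\tau}X$ and then, for a fixed $T$, sets $Z:=e^{\int_0^T g\,d\tau}Y$ so that $X\le Z$ on $[0,T]$ while $Z(T)=X(T)$; this yields $(A+BZ)'\le f\,(A+BZ)\log(A+BZ)$, and a single integration of $\frac{d}{dt}\log\log(A+BZ)\le f$ gives the bound at $t=T$ with no bootstrap. Your route keeps the $g$-term, derives the sharp integral bound $W(t)\le A+(R(t)-A)e^{\psi(t)}$, and closes by a continuity argument, using the monotonicity of $\log R$ to extract $\psi(T)\le(e^{F(T)}-1)\log R(T)$. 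The paper's argument is shorter; yours makes the role of the constant $A$ (and why the bound is sharp) more transparent.

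There is, however, a small gap in your bootstrap. The strict inequality you need at $T$ requires $F(T)>0$, and the regularization $f\to f+\varepsilon$ only ensures this for $T>0$; nothing you wrote excludes $T=0$, where every step in your chain collapses to an equality and no contradiction arises. A clean patch is to bootstrap with the relaxed bound $W(\tau)\le(R(\tau)+\eta)^{e^{F(\tau)}}$ for fixed $\eta>0$: at $\tau=0$ this is strict, so $T>0$ by continuity, and in the final step one computes
\[
(R(T)+\eta)^{e^{F(T)}}-\Bigl(A+(R(T)-A)(R(T)+\eta)^{e^{F(T)}-1}\Bigr)=\eta\,(R(T)+\eta)^{e^{F(T)}-1}+A\bigl((R(T)+\eta)^{e^{F(T)}-1}-1\bigr)\ge\eta>0,
\]
which is strict regardless of whether $F(T)=0$. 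This contradicts $T<\infty$; sending $\eta\to0$ then gives the lemma. Alternatively, adopt the paper's substitution and avoid the bootstrap altogether.
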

\begin{proof}
It suffices to prove the inequality on $[0,T]$ for all $T\geq0.$
Setting $Y(t):=e^{-\int_0^tg\,d\tau}X(t),$ then 
$Z(t):=C_TY(t)$ with $C_T:=\exp\int_0^Tg\,d\tau,$ we have for all $t\in[0,T],$ 
$$(A+BZ)'=BZ'\leq  BZ \log(A+BZ)\, f\leq \bigl(A\!+\!BZ\bigr)\log(A\!+\!BZ)\,f.$$
Therefore,  integrating once, 
$$
\log(\log(A+BZ(t)))\leq \log(\log(A+BZ(0)))+\int_0^tf\,d\tau\quad\hbox{for all }\ t\in[0,T].
$$
Then considering  $t=T$ and taking $\exp$ twice gives 
$$
A+BZ(T)\leq(A+BZ(0))^{\exp\int_0^Tf\,d\tau}.
$$
Reverting to the original function $X$ gives exactly what we want at $t=T.$
\end{proof}

We also used  the following  Poincar\'e inequality.
\begin{lem} Let $\rho$ be in $L_{p'}(\T^d)$ with $\frac1p+\frac1{p'}=1$
and $2\leq p\leq\frac{2d}{d-2}$ if $d\geq3$ ($2\leq p<+\infty$ if $d=2$). 
Assume that
\begin{equation}\label{eq:momnul}
\int_{\T^d}\rho b\,dx=0\andf M:=\int_{\T^d}\rho\,dx>0.\end{equation}
There exists a constant $C_p$ depending on $p$ and on $d$ (and with $C_2=1$),
such that 
\begin{equation}\label{eq:poincarep}
\|b\|_{2}\leq\biggl(1+\frac{C_p}M\|\rho-c\|_{{p'}}\biggr)\|\nabla b\|_{2}\quad\hbox{for any real number }\ c.
\end{equation}
Furthermore, in dimension $d=2,$ we have
\begin{equation}\label{eq:limitpoincare}
\|b\|_{2}\leq C\log^{\frac12}\biggl(\e+\frac{\|\rho-c\|_{2}}{M}\biggr)\|\nabla b\|_{2}.
 \end{equation}
\end{lem}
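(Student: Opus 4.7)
The plan is to split $b = \bar b + (b - \bar b),$ with $\bar b := \int_{\T^d} b\,dx$ the mean of $b$ on the unit-volume torus, and to bound each piece separately. The Poincar\'e--Wirtinger inequality on $\T^d$ (proved via Fourier series, with sharp constant~$1$ in the $L_2$ case) gives $\|b - \bar b\|_2 \leq \|\nabla b\|_2,$ so only $|\bar b|$ has to be controlled. Since $\int_{\T^d}(b - \bar b)\,dx = 0,$ hypothesis \eqref{eq:momnul} yields, for any real $c,$
$$M \bar b \;=\; -\int_{\T^d} \rho (b - \bar b)\,dx \;=\; -\int_{\T^d}(\rho - c)(b - \bar b)\,dx.$$

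For \eqref{eq:poincarep}, the strategy is simply to apply H\"older's inequality $M|\bar b| \leq \|\rho - c\|_{p'}\|b - \bar b\|_p$ together with the Sobolev embedding for mean-zero functions on $\T^d,$ $\|b - \bar b\|_p \leq C_p \|\nabla b\|_2,$ valid in the stated range of $p$ (with $C_2 = 1$ by Poincar\'e). Adding $\|b - \bar b\|_2 \leq \|\nabla b\|_2$ to $|\bar b| \leq (C_p/M)\|\rho - c\|_{p'}\|\nabla b\|_2$ produces \eqref{eq:poincarep}.

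The logarithmic version \eqref{eq:limitpoincare} in dimension two will rely on the sharpened Sobolev embedding
$$\|b - \bar b\|_p \leq C\sqrt p\,\|\nabla b\|_2 \quad\hbox{for all }\ p \geq 2,$$
a classical Trudinger/Moser-type estimate (equivalent to $b - \bar b$ belonging to the Orlicz class $\exp L_2$). Since the right-hand side of \eqref{eq:limitpoincare} is monotone nondecreasing in $\|\rho - c\|_2,$ which is minimized at $c = \bar\rho = M,$ it is enough to prove the bound for $c = M.$ The nonnegativity of $\rho$ (implicit in the intended applications) yields $\|\rho - M\|_1 \leq 2M$ by splitting $\rho - M$ into its positive and negative parts and using $\int(\rho - M)_+ = \int(M - \rho)_+ \leq M.$ Combining H\"older, the 2D Sobolev inequality above, and the Lyapunov interpolation
$$\|\rho - M\|_{p'} \leq \|\rho - M\|_1^{1 - 2/p}\|\rho - M\|_2^{2/p},\quad p \geq 2,$$
gives
$$\frac{|\bar b|}{\|\nabla b\|_2} \leq \frac{C\sqrt p}{M}(2M)^{1 - 2/p}\|\rho - M\|_2^{2/p} = C\sqrt p\, 2^{1 - 2/p} A^{2/p},\quad A := \frac{\|\rho - M\|_2}{M}\cdotp$$
It remains to optimize over $p \geq 2$: if $A \leq \e^{1/2}$ take $p = 2$ to get a uniform bound, otherwise pick $p = 4\log A,$ which makes $A^{2/p} = \e^{1/2}$ and $\sqrt p = 2\sqrt{\log A},$ so that $|\bar b|/\|\nabla b\|_2 \leq C\log^{1/2}(\e + A),$ whence \eqref{eq:limitpoincare}.

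The main obstacle is the sharp $\sqrt p$ growth of the mean-zero Sobolev constant in two dimensions, which is precisely what distinguishes $d = 2$ from $d \geq 3$ and makes the logarithmic improvement possible. A secondary issue is the use of $\rho \geq 0$ to obtain $\|\rho - M\|_1 \leq 2M;$ this is automatic in the intended fluid-dynamics applications, and without positivity one can alternatively use Orlicz duality based on the Moser--Trudinger inequality $\int_{\T^2} \e^{4\pi u^2}\,dx \leq C$ applied to $u := (b - \bar b)/\|\nabla b\|_2,$ combined with a dyadic level-set decomposition of $\rho - M.$
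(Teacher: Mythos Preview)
Your proof of \eqref{eq:poincarep} is identical to the paper's. For \eqref{eq:limitpoincare} you take a genuinely different route: the paper uses a Fourier truncation $\wt b_n:=\sum_{1\leq|k|\leq n}\wh b_k\,e^{2i\pi k\cdot x}$, proves $\|\wt b_n\|_\infty\leq C\sqrt{\log n}\,\|\nabla b\|_2$ by Cauchy--Schwarz on the Fourier side, splits $-M\bar b=\int\rho\,\wt b_n+\int(\rho-c)(\wt b-\wt b_n)$, bounds the two pieces by $\|\rho\|_1\|\wt b_n\|_\infty$ and $\|\rho-c\|_2\|\wt b-\wt b_n\|_2\leq n^{-1}\|\rho-c\|_2\|\nabla b\|_2$, and optimizes in $n$. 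You instead invoke the Trudinger-type inequality $\|\wt b\|_p\leq C\sqrt p\,\|\nabla b\|_2$, interpolate $\|\rho-M\|_{p'}$ between $L_1$ and $L_2$, and optimize in $p$. The two arguments are morally dual: the paper's $\sqrt{\log n}$ bound on the low-frequency part is the Fourier-side manifestation of your $\sqrt p$ Sobolev constant, and optimizing over the truncation level $n$ plays the same role as optimizing over the exponent $p$. Your version is slightly more black-box (you cite the $\sqrt p$ growth rather than deriving it), while the paper's is fully self-contained. Both proofs tacitly use $\rho\geq0$ (the paper replaces $\|\rho\|_1$ by $M$ without comment; you flag it explicitly and sketch an Orlicz-duality workaround), so your treatment of this point is actually more careful than the original.
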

\begin{proof}
Let  $\bar b$ be the average of $b$ and $\wt b:=b-\bar b.$ Then we have by Poincar\'e inequality, 
\begin{equation}\label{eq:zz}
\|b\|_{2}\leq|\bar b|+\|\wt b\|_{2}\leq|\bar b|+\|\nabla b\|_{2}.
\end{equation}
Now,  hypothesis \eqref{eq:momnul} implies that  for all real number $c,$ we have
\begin{equation}\label{eq:average}
-M\bar b=\int_{\T^d}(\rho-c)\wt b\,dx.
\end{equation}
Therefore, by Sobolev inequality,
\begin{equation}\label{eq:averageb}
M|\bar b|\leq\|\rho-c\|_{p'}\|\wt b\|_{p}\leq C_p\|\rho-c\|_{{p'}}\|\nabla b\|_{2}
\end{equation}
and, clearly, $C_2=1.$ This gives \eqref{eq:poincarep}.
\medbreak
To handle  the endpoint case $d=2$ and $p=+\infty,$
 decompose $\wt b$ into Fourier series:
$$
\wt b(x)=\sum_{k\in\Z^2\setminus\{(0,0)\}} \wh b_k\, e^{2i\pi k\cdot x},
$$
and set for any integer $n,$ 
$$
\wt b_n(x):=\sum_{1\leq|k|\leq n}  \wh b_k\, e^{2i\pi k\cdot x}.
$$
By Cauchy-Schwarz inequality, it is easy to prove that
\begin{equation}\label{eq:BMO}
\|\wt b_n\|_\infty\leq C\sqrt{\log n}\,\|\nabla b\|_2.
\end{equation}
 Because the average of $\wt b_n$ is $0,$   one may write, thanks to \eqref{eq:average} that for all $c\in\R,$
 $$
-M\bar b=\int_{\T^2}(\rho -c)\wt b\,dx=\int_{\T^2} \rho\, \wt b_n\,dx+\int_{\T^2}(\rho-c)(\wt b-\wt b_n)\,dx.
$$
Therefore, using H\"older and Poincar\'e inequality, and also \eqref{eq:BMO},
$$\begin{aligned}
M|\bar b|&\leq\|\rho\|_{1}\|\wt b_n\|_{\infty}
+\|\rho-c\|_{2}\|\wt b-\wt b_n\|_{2}\\
&\leq C\Bigl(\sqrt{\log n}\,M+n^{-1}\|\rho-c\|_{2}\Bigr)\|\nabla b\|_{2}.
\end{aligned}$$
Then taking $n\approx\|\rho-c\|_{2}/M$ gives
\begin{equation}\label{eq:averagec}
|\bar b|\leq C \log^{\frac12}\biggl(\e+\frac{\|\rho-c\|_{2}}{M}\biggr)\|\nabla b\|_{2},
\end{equation}
which, combined with  \eqref{eq:zz} yields \eqref{eq:limitpoincare}.
\end{proof}

We used the following version of Desjardins' estimate in \cite{Des-CPDE}.
\begin{lem}\label{DLest}
 Let $\rho \in L_\infty(\T^2)$  with $\rho\geq0,$ and $u \in H^1(\T^2).$ Then, 
 there exists a  universal constant $C$ such that for all real number $c,$ we have
  \begin{equation}\label{eq:DLest-bis}
\biggl(\int_{\T^2} \rho u^4\,dx\biggr)^{\frac12}\leq C \|\sqrt\rho u\|_{2}
\|\nabla u\|_{2}\log^{\frac12}\biggl(\e+\frac{\|\rho-c\|_{2}}{M}
+\frac{\|\rho\|_{2}\|\nabla u\|_{2}^2}{\|\sqrt \rho u\|_{2}^2}\biggr)\cdotp\end{equation}
\end{lem}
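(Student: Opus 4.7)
My strategy is to adapt Desjardins' original argument in \cite{Des-CPDE}, based on a Littlewood--Paley splitting of $u$ at a suitably chosen scale $N\geq 1$ and on critical two-dimensional embeddings. Let me first remark that the inequality as stated cannot hold for arbitrary $u\in H^1(\T^2)$: taking $u\equiv 1$ makes the left-hand side equal to $\sqrt M>0$ while the right-hand side vanishes since $\|\nabla u\|_2=0$. The intended setting, consistent with all uses of \eqref{eq:interpo-des} in the body of the paper, is $\int_{\T^2}\rho u\,dx=0$ (inherited from the normalization \eqref{eq:normalization} and momentum conservation), and I shall take this for granted throughout.

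Fix a cutoff $N\geq 1$ to be optimized at the end, and decompose $u=u_\ell+u_h$ with $u_\ell:=P_N u$ the Fourier truncation onto $|k|\leq N$ and $u_h:=u-u_\ell$. Convexity gives $\int_{\T^2}\rho u^4\,dx\leq C(\int\rho u_\ell^4+\int\rho u_h^4)$. For the low-frequency piece I would invoke the pointwise bound
$$\int_{\T^2}\rho u_\ell^4\,dx\leq \|u_\ell\|_\infty^2\,\|\sqrt\rho\,u_\ell\|_2^2,$$
and control $\|u_\ell\|_\infty$ via a two-dimensional logarithmic Bernstein estimate
$$\|u_\ell-\bar u\|_\infty\leq C\sqrt{\log N}\,\|\nabla u\|_2,$$
obtained from Cauchy--Schwarz on the Fourier expansion together with the elementary identity $\sum_{0<|k|\leq N}|k|^{-2}\asymp\log N$, combined with the weighted Poincar\'e inequality \eqref{eq:limitpoincare} which, under the mean-zero assumption, gives $|\bar u|\leq C\sqrt{\log(\e+\|\rho-c\|_2/M)}\,\|\nabla u\|_2$. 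The residual $\|\sqrt\rho u_\ell\|_2^2$ is then reduced to $\|\sqrt\rho u\|_2^2$ plus a small perturbation via $|u_\ell|^2\leq 2|u|^2+2|u_h|^2$. For the high-frequency piece, the two-dimensional Gagliardo--Nirenberg estimate $\|f\|_8^4\lesssim\|f\|_2^2\|\nabla f\|_2^2$ and the spectral gap $\|u_h\|_2\leq N^{-1}\|\nabla u\|_2$ yield
$$\int_{\T^2}\rho u_h^4\,dx\leq \|\rho\|_2\|u_h\|_8^4\leq C\,\|\rho\|_2\, N^{-2}\,\|\nabla u\|_2^4.$$

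Collecting these bounds produces a sum of the form
$$\bigl(\log N+\log(\e+\|\rho-c\|_2/M)\bigr)\,\|\sqrt\rho u\|_2^2\,\|\nabla u\|_2^2\ +\ \|\rho\|_2 N^{-2}\|\nabla u\|_2^4\ +\ \text{cross terms}.$$
The main obstacle is the optimization step: choosing
$$N^2:=\e+\frac{\|\rho\|_2\|\nabla u\|_2^2}{\|\sqrt\rho u\|_2^2}$$
makes the $N^{-2}$ contribution comparable to $\|\sqrt\rho u\|_2^2\|\nabla u\|_2^2$ and forces $\log N$ to reproduce exactly the log argument appearing in the statement. What will require careful bookkeeping is the control of the cross terms $\int\rho u_\ell^k u_h^{4-k}$ and of the auxiliary residue $\int\rho u_h^2$ that enters $\|\sqrt\rho u_\ell\|_2^2$; a naive bound $\int\rho u_h^2\lesssim\|\rho\|_2 N^{-1}\|\nabla u\|_2^2$ would, after optimization, produce a term of order $\sqrt{R}\log R$ with $R:=\|\rho\|_2\|\nabla u\|_2^2/\|\sqrt\rho u\|_2^2$, which would dominate the target. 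Overcoming this requires either a sharper interpolation of $\int\rho u_h^2$ directly against $\|\sqrt\rho u\|_2^2$, or a two-step iteration in the choice of $N$; this is the delicate technical core of Desjardins' original proof, which I would reproduce in detail to conclude.
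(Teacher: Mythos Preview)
Your observation about the missing hypothesis $\int_{\T^2}\rho u\,dx=0$ is correct and worth noting; the paper's proof uses \eqref{eq:averagec} to bound $|\bar u|$, which requires precisely this.

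The difficulty you flag with the cross terms and the residue $\int\rho u_h^2$ is real for your decomposition, but the paper sidesteps it entirely by a simple trick you are missing: do not split $u^4$ symmetrically. Instead, write
\[
\biggl(\int_{\T^2}\rho u^4\,dx\biggr)^{1/2}
=\bigl\|\,u\,\bigr\|_{L_2(\rho u^2\,dx)}
\]
and decompose only this \emph{single} copy of $u$ as $\bar u+\wt u_n+(\wt u-\wt u_n)$, applying Minkowski in the weighted space. The first two pieces give exactly $(|\bar u|+\|\wt u_n\|_\infty)\,\|\sqrt\rho\,u\|_2$, with no residue. For the high-frequency piece, H\"older with exponents $(8,4,2)$ on $(\wt u-\wt u_n),\rho^{1/2},\rho^{1/2}u^2$ yields
\[
\bigl\|\wt u-\wt u_n\bigr\|_{L_2(\rho u^2\,dx)}
\leq \|\wt u-\wt u_n\|_8\,\|\rho\|_2^{1/4}\,\Bigl(\int\rho u^4\Bigr)^{1/4},
\]
so the target quantity reappears on the right-hand side to a lower power and is absorbed by Young's inequality. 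After that, $\|\wt u-\wt u_n\|_8\lesssim\|\wt u-\wt u_n\|_{\dot H^{3/4}}\leq n^{-1/4}\|\nabla u\|_2$ and the same choice $n\approx \|\rho\|_2\|\nabla u\|_2^2/\|\sqrt\rho u\|_2^2$ closes the argument in one step, with no cross terms and no iteration. This bootstrap device is the missing idea; once you see it, the ``delicate technical core'' you anticipate dissolves.
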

\begin{proof} Let  $\wt u:=u-\bar u$ and  fix some $n\in\N.$ Then, keeping the same notation as in 
the above lemma and  using H\"older inequality, 
$$\begin{aligned}
\biggl(\int_{\T^2}\rho u^4\,dx\biggr)^{\frac12} &=\biggl(\int_{\T^2} \bigl(\bar u+\wt u_n+(\wt u-\wt u_n)\bigr)^2\,\rho u^2\,dx\biggr)^{\frac12}\\&\leq
|\bar u|\|\sqrt\rho u\|_{2}+\|\sqrt\rho u\|_{2}\|\wt u_n\|_{\infty}
+\|\rho\|_{2}^{\frac14}\|\wt u-\wt u_n\|_{8}\biggl(\int_{\T^2}\rho u^4\,dx\biggr)^{\frac14}\cdotp
\end{aligned}
$$
We thus have, using Young inequality and embedding $\dot H^{\frac34}(\T^2)\hookrightarrow L_8(\T^2),$
\begin{equation}\label{eq:des1}
\biggl(\int_{\T^2}\rho u^4\,dx\biggr)^{\frac12} \leq 2\|\sqrt\rho u\|_{2}\bigl(|\bar u|+\|\wt u_n\|_{\infty}\bigr)
+C\|\rho\|_2^{\frac12}\|\wt u-\wt u_n\|_{\dot H^{\frac34}}^2.
\end{equation}
Hence, taking advantage of \eqref{eq:BMO} and of 
\begin{equation}\label{eq:des3}
\|\wt u-\wt u_n\|_{\dot H^{\frac34}}\leq n^{-1/4}\|\nabla u\|_{2},\end{equation}
then plugging  \eqref{eq:des3} in \eqref{eq:des1}, we get
$$
\biggl(\int_{\T^2}\rho u^4\,dx\biggr)^{\frac12} \lesssim \|\sqrt\rho u\|_{2}\,|\bar u|
+\bigl(\sqrt{\log n} \|\sqrt\rho u\|_{2}+ n^{-\frac12}\|\rho\|_{2}^{\frac12}\|\nabla u\|_{2}\bigr)\|\nabla u\|_{2}.
$$
Taking $\displaystyle n\approx\frac{\|\rho\|_{2}\|\nabla u\|_{2}^2}{\|\sqrt \rho u\|_{2}^2}$
and using \eqref{eq:averagec} to bound $|\bar u|$ yields
the desired inequality. 
\end{proof}

%%%%%%%%%%%%%%%%%%%%%%%%%%%%%%%%%%%%%%%%

\section{End of the proof of time weighted estimates  in the 2D case}

We  here provide the reader with the proofs of Inequalities \eqref{eq:I2} and \eqref{eq:K2}.

\subsection*{Proof of \eqref{eq:I2}} We use \eqref{eq:decompo} to bound $I_2$  as follows: 
\begin{equation}\label{I2} I_2 = \int_{\T^2} \nabla v \cdot \nabla^2 \left(\cP v - \frac{1}{\nu} \nabla (-\Delta)^{-1}(\wt G +\wt P)\right) \cdot \dot v t \, dx=:I_{21}+I_{22}+I_{23}.\end{equation}
From \eqref{w20}, we know that 
\begin{equation}\label{w23}
\mu^2 \| \sqrt t\,\nabla^2 \cP v(t) \|_2^2 +\| \sqrt t\,\nabla G(t)\|_2^2  \leq \rho^*\,\|\sqrt{\rho t}\,\dot v\|_2^2.
\end{equation}
Since we have
$$
 \frac{\mu}{(\rho^*)^{1/4}}\| \sqrt t\,\nabla^2 \cP v\|_{L_4(0,T;L_2)} 
 \leq \biggl(\frac{\mu T^{1/2}}{\sqrt{\rho^*}}\|\nabla^2 \cP v \|_{L_2(0,T;L_2)}\biggr)^{1/2}\! \biggl(\mu\| \sqrt t\,\nabla^2 \cP v \|_{L_\infty(0,T;L_2)}\biggr)^{1/2}
 $$
 and a similar inequality for $\nabla G,$ combining \eqref{w23} and   Proposition \ref{p:H1a} yields
\begin{equation}\label{eq:vL4}
 \|\sqrt t\,\nabla^2 \cP v \|_{L_4(0,T;L_2)} + \|\sqrt t\,\nabla G\|_{L_4(0,T;L_2)} 
 \leq C_0T^{1/4}  \|\sqrt{t\rho}\, \dot v\|_{L_\infty(0,T;L_2)}^{1/2}.
\end{equation}
Therefore, putting together with \eqref{w22}, we gather that 
\begin{eqnarray}\label{w24}
\biggl| \int_0^T I_{21}\, dt\biggr|&\!\!\!\leq\!\!\!&\|\nabla v\|_{L_4(0,T\times\T^2)}
\|\sqrt t\,\nabla^2\cP v\|_{L_4(0,T;L_2)} \|\sqrt t\,\dot v\|_{L_2(0,T;L_4)}\nonumber\\ 
&\!\!\!\leq\!\!\!& C_0 T^{1/4} \|\sqrt{\rho t}\,  \dot v \|_{L_\infty(0,T;L_2)}^{1/2}\|\sqrt t\,\dot v\|_{L_2(0,T;L_4)}.
\end{eqnarray}
Term $I_{22}$ is almost the same: taking into account \eqref{eq:vL4}, we obtain
\begin{eqnarray}\label{w25}
\biggl|\int_0^T  I_{22} \,dt\biggr|&\!\!\!\leq\!\!\!&\nu^{-1}\|\nabla v\|_{L_4(0,T\times\T^2)}
\|\sqrt t\,\nabla G\|_{L_4(0,T;L_2)} \|\sqrt t\,\dot v\|_{L_2(0,T;L_4)}\nonumber\\ 
 &\!\!\!\leq\!\!\!& C_0\nu^{-1} T^{1/4} \|\sqrt{\rho t}\,  \dot v \|_{L_\infty(0,T;L_2)}^{1/2}\|\sqrt t\,\dot v\|_{L_2(0,T;L_4)}.
\end{eqnarray}
To handle  $I_{23},$ we integrate by parts several times and get (with the summation convention 
for repeated indices and the notation $\psi:=(-\Delta)^{-1}\wt P$):
$$
\begin{aligned}
I_{23}&=-\frac1\nu\int_{\T^2} \d_kv^j\,\d^3_{ijk}\psi\,\dot v^i t\,dx\\
&=\frac1\nu\int_{\T^2}\d_k\div v\,\d^2_{ik}\psi\,\dot v^i\,t\,dx
+\frac1\nu\int_{\T^2}\d_kv^j\,\d^2_{ik}\psi\,\d_j\dot v^i\,t\,dx\\
&=\frac1{\nu^2}\int_{\T^2}\d_k\wt P\,\d^2_{ik}\psi\,\dot v^i\,t\,dx+
\frac1{\nu^2}\int_{\T^2}\d_k\wt G\,\d^2_{ik}\psi\,\dot v^i\,t\,dx
+\frac1\nu\int_{\T^2}\d_kv^j\,\d^2_{ik}\psi\,\d_j\dot v^i\,t\,dx\\
&=-\frac1{\nu^2}\int_{\T^2}\wt P\,\d^2_{ikk}\psi\,\dot v^i\,t\,dx-
\frac1{\nu^2}\int_{\T^2}\wt P\,\d^2_{ik}\psi\,\d_k\dot v^i\,t\,dx\\
&\hspace{5cm}+\frac1{\nu^2}\int_{\T^2}\d_k\wt G\,\d^2_{ik}\psi\,\dot v^i\,t\,dx
+\frac1\nu\int_{\T^2}\d_kv^j\,\d^2_{ik}\psi\,\d_j\dot v^i\,t\,dx.
\end{aligned}
$$
As  $\psi:=(-\Delta)^{-1}\wt P,$  integrating by parts
one more time in the first term of the right-hand side just above, we conclude that
\begin{multline}\label{w26}
I_{23}= -\frac1{2\nu^2}\int_{\T^2}\wt P^2\,\div\dot v\,t\,dx-
\frac1{\nu^2}\int_{\T^2}\wt P\,\d^2_{ik}\psi\,\d_k\dot v^i\,t\,dx\\
+\frac1{\nu^2}\int_{\T^2}\d_k\wt G\,\d^2_{ik}\psi\,\dot v^i\,t\,dx
+\frac1\nu\int_{\T^2}\d_kv^j\,\d^2_{ik}\psi\,\d_j\dot v^i\,t\,dx.
\end{multline}
Hence,  using H\"older inequality
and the continuity of $\nabla^2(-\Delta)^{-1}$ on $L_4(\T^2),$ we get
$$\displaylines{
\biggl|\int_0^T I_{23}(t)\,dt\biggr|\lesssim\frac{\sqrt T}{\nu^2}
\biggl(\|\wt P\|_{L_4(0,T\times\T^2)}^2\|\sqrt t\,\nabla\dot v\|_{L_2(0,T\times\T^2)}
\hfill\cr\hfill+\|\nabla G\|_{L_2(0,T\times\T^2)}\|\wt P\|_{L_\infty(0,T;L_4)}\|\sqrt t\,\dot v\|_{L_2(0,T;L_4)}
\hfill\cr\hfill+\nu\|\nabla v\|_{L_2(0,T\times\T^2)}\|\wt P\|_{L_\infty(0,T\times\T^2)}\|\sqrt t\,\nabla\dot v\|_{L_2(0,T\times\T^2)}\biggr)\cdotp}
$$
Hence, thanks to \eqref{eq:energy} and  \eqref{w22b}, 
one can conclude that
\begin{equation}\label{eq:I23}
\biggl|\int_0^T I_{23}(t)\,dt\biggr|\leq  C_0\sqrt T\bigl(\nu^{-1}\|\sqrt t\,\nabla\dot v\|_{L_2(0,T;L_2)}
+\nu^{-2}\|\sqrt t\,\dot v\|_{L_2(0,T;L_4)}\bigr)\cdotp
\end{equation}

\subsection*{Proof of \eqref{eq:K2}}

We use  the decomposition
$K_2=K_{2,1}+K_{2,2}+K_{2,3}$ with 
$$
\displaylines{
K_{2,1}:=\int_{\T^2}(\nabla\cP v\cdot\nabla\div v)\cdot\dot vt\,dx,\qquad
K_{2,2}:=-\frac1\nu\int_{\T^2}(\nabla^2(-\Delta)^{-1}\wt G\cdot\nabla\div v)\cdot\dot v t\,dx\cr\andf
K_{2,3}:=-\frac1\nu\int_{\T^2}(\nabla^2(-\Delta)^{-1}\wt P\cdot\nabla\div v)\cdot\dot v t\,dx.}$$
In order to handle $K_{2,1},$ we integrate by parts (note that $\div \cP v=0$) 
and use the fact that $\nu\,\div v=\wt P+\wt G.$ We get, with the usual summation convention
$$K_{2,1}=
%-\frac1\nu\int_{\T^2}\d_i(\cP v)^j\,\wt P\,\d_j\dot v^i\,t\,dx-\frac1\nu\int_{\T^2}\d_i(\cP v)^j\,\wt G\,\d_j\dot v^i\,t\,dx\\&=
-\frac{\sqrt t}\nu\int_{\T^2}\d_i(\cP v)^j\,\wt P\,\d_j\dot v^i\,\sqrt t\,dx
+\frac1\nu\int_{\T^2}\d_i(\cP v)^j\,\sqrt t\d_j G\,\dot v^i\,\sqrt t\,dx.
%\end{aligned}
$$
Therefore,
\begin{eqnarray}\label{eq:K21}
\nu\biggl|\int_0^T K_{2,1}\,dt\biggr|&\!\!\!\leq\!\!\!& \sqrt T\|\nabla\cP v\|_{L_2(0,T;L_2)}\|\wt P\|_{L_\infty(0,T;L_\infty)}
\|\sqrt t\,\nabla\dot v\|_{L_2(0,T;L_2)}\nonumber\\
&&\qquad\qquad+\|\nabla\cP v\|_{L_4(0,T;L_4)}\|\sqrt t\, \nabla G\|_{L_4(0,T;L_2)}\|\sqrt t\,\dot v\|_{L_2(0,T;L_4)}\nonumber\\
&\!\!\!\leq\!\!\!& C_0\bigl(\sqrt T\|\sqrt t\,\nabla\dot v\|_{L_2(0,T;L_2)}+T^{1/4}\|\sqrt{\rho t}\, \dot v\|_{L_\infty(0,T;L_2)}^{1/2} 
 \|\sqrt t\,\dot v\|_{L_2(0,T;L_4)}\bigr)\cdotp
\end{eqnarray}

Next, integrating by parts in $K_{2,2}$  and using $\nu\,\div v=\wt P+\wt G$ gives
$$\begin{aligned}
\nu K_{2,2}&=-\int_{\T^2}\sqrt t\nabla G\cdot \sqrt t\dot v\: \div v\,dx
+\sqrt t \int_{\T^2}\nabla^2(-\Delta)^{-1}\wt G\cdot\sqrt t\nabla\dot v\,\div v\,dx\\
&=-\int_{\T^2}\sqrt t\nabla G\cdot \sqrt t\dot v\: \div v\,dx
+\frac{\sqrt t}\nu \int_{\T^2}\nabla^2(-\Delta)^{-1}\wt G\cdot\sqrt t\nabla\dot v\,\wt G\,dx\\&\hspace{7cm}
+\frac{\sqrt t}\nu \int_{\T^2}\nabla^2(-\Delta)^{-1}\wt G\cdot\sqrt t\nabla\dot v\,\wt P\,dx\end{aligned}
$$
from which we get
\begin{eqnarray}\label{eq:K22}
\nu\biggl|\int_0^T K_{2,2}\,dt\biggr|&\!\!\!\leq\!\!\!& \|\div v\|_{L_4(0,T;L_4)}\|\sqrt t\,\dot v\|_{L_2(0,T;L_4)}
\|\sqrt t\,\nabla G\|_{L_4(0,T;L_2)}\nonumber\\
&&+\nu^{-1}\sqrt T\|\wt G\|_{L_4(0,T;L_4)}\|\sqrt t\nabla\dot v\|_{L_2(0,T;L_2)}\bigl(
\|\wt G\|_{L_4(0,T;L_4)}+\|\wt P\|_{L_4(0,T;L_4)}\bigr)\nonumber\\
&\!\!\!\leq\!\!\!&C_0T^{1/4}\|\sqrt t\,\dot v\|_{L_2(0,T;L_4)}\|\sqrt{\rho t}\, \dot v\|_{L_\infty(0,T;L_2)}^{1/2} 
+\nu^{-1/2}\sqrt T\|\sqrt t\,\nabla\dot v\|_{L_2(0,T;L_2)}.\end{eqnarray}

Finally, using again the notation $\psi:=(-\Delta)^{-1}\wt P,$ we have
$$
\begin{aligned}
\nu^2K_{2,3}&=-\int_{\T^2}\d_i\d_j\psi\,\d_j\wt P\,t\dot v^i\,dx
-\int_{\T^2}\d_i\d_j\psi\,\d_jG\,t\dot v^i\,dx\\
&=\int_{\T^2}\d_i\d^2_j\psi\,\wt P\,t\dot v^i\,dx+\int_{\T^2}\d_i\d_j\psi\,\wt P\,t\d_j\dot v^i\,dx
-\int_{\T^2}\d_i\d_j\psi\,\d_jG\,t\dot v^i\,dx\\
&=\frac1{2}\int_{\T^2}\wt P^2\,t\div\dot v\,dx+\int_{\T^2}\d_i\d_j\psi\,\wt P\,t\d_j\dot v^i\,dx
-\int_{\T^2}\d_i\d_j\psi\,\d_jG\,t\dot v^i\,dx.
\end{aligned}
$$
Therefore,
\begin{eqnarray}\label{eq:K23}
\nu^2\biggl|\int_0^T K_{2,3}\,dt\biggr|&\!\!\!\lesssim\!\!\!& \sqrt T\|\wt P\|_{L_4(0,T;L_4)}^2\|\sqrt t\nabla\dot v\|_{L_2(0,T;L_2)}\nonumber\\
&&+\|\wt P\|_{L_4(0,T;L_4)}\|\sqrt t\,\nabla G\|_{L_4(0,T;L_2)}\|\sqrt t\,\dot v\|_{L_2(0,T;L_4)}\nonumber\\
&\!\!\!\leq\!\!\!& C_0\sqrt{\nu T}\|\sqrt t\nabla\dot v\|_{L_2(0,T;L_2)}
+C_0(\nu T)^{1/4}\|\sqrt{\rho t}\, \dot v\|_{L_\infty(0,T;L_2)}^{1/2} 
 \|\sqrt t\,\dot v\|_{L_2(0,T;L_4)}.
\end{eqnarray}
Plugging  \eqref{eq:H1}, \eqref{eq:vL4}, \eqref{w22}, \eqref{w22a} and \eqref{w22b}
in \eqref{eq:K21}, \eqref{eq:K22}  and  \eqref{eq:K23-3} yields \eqref{eq:K2}. 
%$$\nu \biggl|\int_0^T K_2 \,dt\biggr| \leq C_0T^{1/4}\Bigl(\|\sqrt{\rho t}\, \dot v\|_{L_\infty(0,T;L_2)}^{1/2} 
 %\|\sqrt t\,\dot v\|_{L_2(0,T;L_4)}\!+\!T^{1/4}\|\sqrt t\,\nabla\dot v\|_{L_2(0,T;L_2)}\Bigr)\cdotp$$

%%%%%%%%%%%%%%%%%%%%%%%%%%%%%%%%%%%%%%

\section{The three-dimensional case}\label{s:d=3}

This section is devoted to extending our existence result to the three-dimensional torus. 
For expository purpose, we focus on the global-in-time  issue for small data, 
although a similar statement may be proved locally in time for large data.
\begin{thm}\label{thm:global3} Let $v_0$ be in $H^1(\T^3)$ and $\rho_0$ be a bounded
nonnegative function on $\T^3.$  
Assume that $P(\rho)=a\rho^\gamma$ for some $\gamma\geq1$ and $a>0.$ 
There exists $\nu_0>0$ depending only on $\mu,$ $\gamma,$ $a$ and on the norms of the data, and $c_0>0$ such that  if \begin{equation}\label{eq:smalld3}
\mu\|\nabla \cP v_0\|_2^2+\frac1\nu\|\wt P_0\|_2^2
+\nu\|\div v_0\|_2^2\leq  c_0 \frac{\mu^5}{(\rho^*)^3E_0},
\end{equation}
 then System \eqref{CNS} has 
a  unique global solution with  the same properties as in Theorem \ref{thm:global2}. 
\end{thm}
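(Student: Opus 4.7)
The plan is to mimic the three-step scheme of Propositions \ref{p:H1a}, \ref{p:boundrho2} and \ref{p:weight}, adapted to dimension three. The central idea is that the Desjardins logarithmic inequality \eqref{eq:interpo-des}, which is really a two-dimensional tool, must be replaced by the standard Sobolev embedding $H^1(\T^3)\hookrightarrow L_6(\T^3)$ together with the Gagliardo--Nirenberg interpolation $\|\nabla f\|_3\lesssim \|\nabla f\|_2^{1/2}\|\nabla^2 f\|_2^{1/2}$. These produce polynomial (rather than logarithmic) factors and therefore require a smallness assumption to close the estimates globally, which is precisely the role of hypothesis \eqref{eq:smalld3}.

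First I would establish the $3$D analogue of the $H^1$ energy bound. The delicate terms in the right-hand side of the analogue of \eqref{eq:e2b}, namely $\int_{\T^3}\rho|v\cdot\nabla\cP v|^2$ and its companions arising from the decomposition \eqref{eq:e2c}, are treated by combining H\"older's inequality, the above Sobolev and Gagliardo--Nirenberg bounds, and the weighted Poincar\'e inequality \eqref{eq:poincarep}, which together with the normalization $\int_{\T^3}\rho v\,dx=0$ yields $\|v\|_6\lesssim \sqrt{\rho^*}\,\|\nabla v\|_2$. Absorbing the $\|\nabla^2\cP v\|_2$ and $\|\nabla G\|_2$ factors into the dissipation $\cD$ via Young's inequality, one ends up with a differential inequality of the shape
\begin{equation*}
 \frac{d\cE}{dt} + \cD \leq C\,\frac{(\rho^*)^3 E_0}{\mu^5}\,\cE^2\cdot \mu\|\nabla v\|_2^2,
\end{equation*}
lower-order contributions being controlled by imposing an analogue of \eqref{eq:largenu} on $\nu$.

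Next I would close this bound by a classical bootstrap. Let $T^\ast$ be the supremum of times on which $\cE(t)\leq 2\cE_0$. Using $\int_0^{T^\ast}\mu\|\nabla v\|_2^2\,d\tau\leq E_0$ from the energy balance \eqref{eq:energy}, integration of the above inequality gives
\begin{equation*}
 \cE(t)\leq \cE_0 + C\,\frac{(\rho^*)^3 E_0^2\cE_0^2}{\mu^5}\quad\hbox{for all }\ t\in[0,T^\ast].
\end{equation*}
Choosing $c_0$ small enough in \eqref{eq:smalld3} then forces $\cE(t)<2\cE_0$ strictly on $[0,T^\ast]$, so $T^\ast=+\infty$, which yields the desired global $H^1$ control. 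The remaining two steps follow the $2$D template: for the density bound (Proposition \ref{p:boundrho2}) I would replace \eqref{eq:d=2} by the three-dimensional Sobolev embedding $\|(-\Delta)^{-1}\div(\rho v)\|_\infty\lesssim\|\rho v\|_p$ for some $p>3,$ which is controlled through $v\in L_\infty(\R_+;L_6(\T^3))$ and the uniform upper bound on $\rho$; and for the time-weighted bounds (Proposition \ref{p:weight}) I would adjust the Gagliardo--Nirenberg exponents accordingly, replacing \eqref{eq:dotv4} by the $3$D Poincar\'e-type inequality $\|\sqrt t\,\dot v\|_6\lesssim \|\sqrt t\,\nabla\dot v\|_2$.

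The main obstacle is orchestrating the nonlinear estimates at the $H^1$ level so that the combination of Sobolev, Gagliardo--Nirenberg and weighted Poincar\'e inequalities produces exactly the balance of powers $(\rho^*)^3\mu^{-5}E_0$ displayed in \eqref{eq:smalld3}, and verifying that the bootstrap closes cleanly. Once the a priori bounds are in hand, the construction of solutions carried out in Section \ref{s:existence} transfers with only cosmetic changes (the estimates \eqref{e10}--\eqref{e11} were already formulated for both dimensions), and uniqueness follows from Proposition \ref{p:uniqueness}, which is already stated for $d=2,3$.
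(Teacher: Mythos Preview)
Your proposal is correct and follows the same overall architecture as the paper's Appendix: replace Desjardins' logarithmic inequality by three-dimensional Sobolev/Gagliardo--Nirenberg bounds, close the resulting polynomial differential inequality for $\cE$ under the smallness hypothesis \eqref{eq:smalld3}, then adapt the density and time-weighted steps with the appropriate embeddings. The paper differs only in technical choices: it uses the $L_4$--$L_4$ split $\bigl(\int_{\T^3}\rho|v|^4\bigr)^{1/2}\lesssim(\rho^*)^{1/4}\|\sqrt\rho\,v\|_2^{1/2}\|\nabla v\|_2^{3/2}$ rather than your $L_6$--$L_3$ H\"older/Poincar\'e split, which produces a cubic-plus-linear inequality $\cE'\leq(A\cE+B\cE^3)\|\nabla v\|_2^2$ that it integrates explicitly instead of by bootstrap; and it redefines $\cE$ in three dimensions by weighting the basic energy contribution as $\tfrac{P^*}{\nu}E$ rather than $E$, which is what allows $E_0$ to be arbitrarily large while keeping $\cE_0$ small for large $\nu$. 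You should be aware that this last modification is the mechanism behind the claim that only the combination in \eqref{eq:smalld3} needs to be small, not $E_0$ itself.
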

The general strategy is basically the same as for the two-dimensional case, 
except that the smallness condition spares our using the logarithmic
interpolation inequality (that does not hold for $d=3$). 
We just underline what has to be changed in the main steps of the proof.

\subsection*{Step 1: Sobolev estimates for the velocity} The counterpart of Proposition 
\ref{p:H1a} reads:
\begin{prop}\label{p:H1b} Let $(\rho,v)$ be a smooth solution of \eqref{CNS} 
on $[0,T]\times\T^3,$ fulfilling    \eqref{eq:normalization} and \eqref{eq:bounded}.
Assume that $P$ satisfies \eqref{eq:condP} and denote $P^*:=\|P\|_\infty.$ 
Under condition \eqref{eq:smalld3} and for large enough $\nu,$
there exists a constant $C$ such that for all $t\in[0,T],$ we have
$$\displaylines{
\mu \|\nabla\cP v(t)\|_2^2+\frac{1}{\nu}\bigl(\|\wt G(t)\|_2^2+\|\wt P(t)\|_2^2\bigr)
  +\frac{P^*}{\nu}\|(\sqrt\rho\,v)(t)\|_2^2+\frac{P^*}{\nu}\|e(t)\|_1\hfill\cr\hfill
  +\int_0^t\biggl(\|\sqrt\rho\,\dot v\|_{2}^2+\frac{\mu^2}{\rho^*}\|\nabla^2\cP v\|_{2}^2+\frac1{\rho^*}\|\nabla G\|_{2}^2+\nu\|\div v\|_2^2+\frac{\mu P^*}{\nu}\|\nabla v\|_{2}^2\biggr)d\tau
 \hfill\cr\hfill \leq C\Bigl(\mu\|\nabla \cP v_0\|_2^2+\frac1\nu\|\wt P_0\|_2^2
+\nu\|\div v_0\|_2^2\Bigr)\cdotp}
  $$
\end{prop}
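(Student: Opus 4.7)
The plan is to adapt the scheme of Proposition \ref{p:H1a} to three dimensions, replacing the logarithmic interpolation \eqref{eq:interpo-des} (unavailable for $d=3$) by a bootstrap argument that exploits the smallness hypothesis \eqref{eq:smalld3}. First I would invoke Step 1 of the proof of Proposition \ref{p:H1a}, which is dimension-free, to obtain the differential inequality \eqref{eq:e2b} for the functional $\wt\cE$ defined in \eqref{eq:wtE}. The task is then to bound the three cubic terms on the right-hand side of \eqref{eq:e2b} using only three-dimensional Sobolev embeddings.

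Since \eqref{eq:normalization} gives $\int_{\T^3}\rho v\,dx=0$, the Poincar\'e inequality \eqref{eq:poincarep} combined with $H^1(\T^3)\hookrightarrow L_6(\T^3)$ yields $\|v\|_{6}\leq C_{\rho^*}\|\nabla v\|_2$. Together with the interpolation $\|\nabla\cP v\|_{3}\lesssim \|\nabla\cP v\|_2^{1/2}\|\nabla^2\cP v\|_2^{1/2}$ and Young's inequality, one obtains
\begin{equation*}
3\int_{\T^3}\rho|v\cdot\nabla\cP v|^2\,dx\leq \frac{\mu^2}{12\rho^*}\|\nabla^2\cP v\|_2^2+\frac{C_{\rho^*}}{\mu^2}\|\nabla v\|_2^4\|\nabla\cP v\|_2^2.
\end{equation*}
The continuity of $\nabla^2(-\Delta)^{-1}$ on $L_3(\T^3)$ handles the two remaining cubic terms by the same scheme, producing absorbable contributions $\frac{1}{24\rho^*}\|\nabla G\|_2^2$ and $\frac1{8\nu}\|\wt P\|_2^2$ plus cubic remainders weighted by $\nu^{-4}\|\wt G\|_2^2$ or $\nu^{-3}\|\wt P\|_\infty^2$, both harmless for large $\nu$. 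The cross-term $-\nu^{-1}\int Pv\cdot\nabla G$ is absorbed by a Cauchy-Schwarz estimate using $\|v\|_2\leq C_{\rho^*}\|\nabla v\|_2$ in place of the 2D logarithm.

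I would then upgrade $\wt\cE$ by adding $(P^*/\nu)$ times the basic energy balance \eqref{eq:energy} and $(P^*/\nu)$ times the potential-energy identity \eqref{eq:e13b}: the resulting functional $\cE$ dominates exactly the quantities displayed on the left-hand side of the statement, and the identity \eqref{eq:e13b} contributes a decay term $\frac{P^*}{\nu^2}\|\wt P\|_2^2$ absorbable into $\cD$. For $\nu$ large enough, the above collects into
\begin{equation*}
\frac{d}{dt}\cE+\cD\leq \frac{C_{\rho^*}}{\mu^4}\,\cE^2\,\|\nabla v\|_2^2.
\end{equation*}
Integrating on $[0,t]$ and invoking the basic energy bound $\mu\int_0^t\|\nabla v\|_2^2\,d\tau\leq 2E_0$, a standard bootstrap on the condition $\sup_{[0,t]}\cE\leq 2\cE_0$ shows that the smallness \eqref{eq:smalld3} keeps the nuisance below $\cE_0$, giving $\cE(t)<2\cE_0$ and closing the bootstrap for all $t\in[0,T]$.

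The main technical obstacle is to arrange the three cubic nuisance terms so that, after Young's inequality, the remainder takes the form $\cE^2\|\nabla v\|_2^2$ (linear in $\|\nabla v\|_2^2$, so that the basic energy $E_0$ controls its time integral) rather than a higher power of $\cE$; this dictates precisely how the interpolation and density-weighted Poincar\'e inequalities must be distributed, and it is what produces the scaling-invariant threshold $\mu^5/((\rho^*)^3 E_0)$ in \eqref{eq:smalld3}.
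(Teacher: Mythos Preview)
Your proposal is correct in spirit and follows the same overall scheme as the paper: start from the dimension-free identity \eqref{eq:e2b}, replace the logarithmic interpolation by three-dimensional Sobolev/interpolation inequalities, add a $(P^*/\nu)$-multiple of the basic energy to $\wt\cE$ (this is exactly the paper's choice \eqref{eq:E3def}), and close under a smallness hypothesis on $\cE_0$. The differences are in the technical implementation.

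The paper estimates the cubic terms via an $L_4\times L_4$ H\"older splitting together with the weighted inequality $\bigl(\int\rho|v|^4\bigr)^{1/2}\lesssim (\rho^*)^{1/4}\|\sqrt\rho\,v\|_2^{1/2}\|\nabla v\|_2^{3/2}$; after Young this produces a remainder of the type $\|\sqrt\rho\,v\|_2^2\|\nabla v\|_2^6\|\nabla\cP v\|_2^2$, hence a differential inequality of the form $X'\leq f(t)(AX+BX^3)$, which the paper integrates explicitly. Your $L_6\times L_3$ splitting with $\|\nabla\cP v\|_3\lesssim\|\nabla\cP v\|_2^{1/2}\|\nabla^2\cP v\|_2^{1/2}$ yields instead a remainder $\|\nabla v\|_2^4\|\nabla\cP v\|_2^2$, so a Riccati-type inequality $\cE'+\cD\lesssim \cE^2\|\nabla v\|_2^2$, which you close by bootstrap. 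Both routes are legitimate; yours is arguably simpler because the resulting ODE is lower order and the bootstrap avoids the explicit integration formula. What the paper's choice buys is that the factor $\|\sqrt\rho\,v\|_2^2\leq 2E_0$ appears naturally and is decoupled from $\cE$, which makes the precise dependence on $\rho^*$ in the threshold \eqref{eq:smalld3} easier to track. With your $L_6\times L_3$ route one has to be a bit careful: the Poincar\'e constant in $\|v\|_6\leq C_{\rho^*}\|\nabla v\|_2$ carries a $\sqrt{\rho^*}$ (via $\|\wt\rho\|_2$), so a naive count gives a slightly worse power of $\rho^*$ in the smallness condition than the stated $(\rho^*)^3$; this is a bookkeeping issue rather than a structural one. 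One small redundancy: since $E$ already contains $\|e\|_1$, adding both $(P^*/\nu)E$ and $(P^*/\nu)$ times \eqref{eq:e13b} double-counts the potential-energy evolution; the paper only adds $(P^*/\nu)E$, and the $\|\wt P\|_2^2$ term in the conclusion comes from $\wt\cE$ itself, not from an extra decay identity.
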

\begin{proof}  In order to be able to consider general initial data \emph{with large energy},
 it is suitable  to  modify the definition of $\cE$ as follows:
 \begin{equation}\label{eq:E3def}
 \cE:=\wt\cE+\frac{P^*}\nu E+\frac{P(1)}{2\nu}(P^*-P(1)),
 \end{equation}
where $\wt\cE$ is still defined by \eqref{eq:tE}. Owing to \eqref{eq:wtcE} that is also valid for $d=3$, we have
  \begin{equation}\label{eq:E3ineq}
  \cE\geq\frac\mu2 \|\cP v\|_2^2+\frac{1}{2\nu}\bigl(\|\wt G\|_2^2+\|\wt P\|_2^2\bigr)
  +\frac{P^*}{2\nu}\|\sqrt\rho\,v\|_2^2+\frac{P^*}{2\nu}\|e\|_1.
  \end{equation}
  Then, we start from Inequality \eqref{eq:e2b} that is valid in any dimension and,  instead of \eqref{eq:interpo-des},  we   use that
  \begin{equation}\label{eq:L4}
 \biggl(\int_{\T^3}\rho|v|^4\,dx\biggr)^{1/2}\leq  \biggl(\int_{\T^3}\rho|v|^2\,dx\biggr)^{1/4} 
 \biggl(\int_{\T^3}\rho|v|^6\,dx\biggr)^{1/4}\lesssim(\rho^*)^{1/4}\|\sqrt\rho\,v\|_2^{1/2}\|\nabla v\|_2^{3/2}.
 \end{equation}
 One  can  thus bound the  right-hand side of \eqref{eq:e2c} as follows: 
 $$\begin{aligned}
3\Int_{\T^3} \rho|v\cdot\nabla \cP v|^2\,dx&\leq 3\sqrt{\rho^*}
\|\rho^{1/4}v\|_4^2\|\nabla\cP v\|_4^2\\
&\leq C{\rho^*}^{3/4}\|\sqrt\rho v\|_{2}^{1/2}\|\nabla v\|_{2}^{3/2}\|\nabla\cP v\|_{2}^{1/2}
\|\nabla^2\cP v\|_2^{3/2}\\&
\leq\!\Frac{\mu^2}{4\rho^*}\|\Delta\cP v\|_{2}^2 
+C\biggl(\frac{\rho^*}{\mu}\biggr)^6 \|\sqrt\rho v\|_{2}^2\|\nabla v\|_{2}^6\|\nabla \cP v\|_{2}^2,
\end{aligned}$$
$$\begin{aligned}
\frac{3}{\nu^2}\Int_{\T^3}\rho\, \Bigl|\,v\cdot\left[ \nabla^2(-\Delta)^{-1}\wt G\right]\Bigr|^2\,dx 
&\leq C(\rho^*)^{1/2}\nu^{-2}\|\rho^{1/4}v\|_4^2\|\wt G\|_4^2\\
&\leq C(\rho^*)^{3/4}\nu^{-2} \|\sqrt\rho v\|_{2}^{1/2}\|\nabla v\|_{2}^{3/2}\|\wt G\|_{2}^{1/2}\|\nabla G\|_2^{3/2}\\
&\leq\Frac{1}{8\rho^*}\|\nabla G\|_{2}^2+ C\frac {(\rho^*)^6}{\nu^8} \|\sqrt\rho v\|_{2}^2\|\nabla v\|_{2}^6\|\wt G\|_{2}^2,
\end{aligned}$$
and also, thanks to Inequality \eqref{eq:poincarep}, 
$$\begin{aligned}
\frac{3}{\nu^2}\Int_{\T^3}\rho |v\cdot\nabla^2\Delta^{-1}\wt P|^2\,dx&\leq C\frac{\rho^*}{\nu^{2}}\|v\|_{2}^{1/2}\|\nabla v\|_{2}^{3/2}\|\wt P\|_4^2\\&\leq C\frac{(\rho^*)^{3/2}}{\nu^2}\|\nabla v\|_{2}^{2}\|\wt P\|_2\|\wt P\|_\infty\\
&\leq \frac{\mu P^*}{4\nu}\|\nabla v\|_2^2 +C\frac{(\rho^*)^{3}}{\mu\nu^3}
\frac{\|\wt P\|_\infty^2}{P^*}\|\nabla v\|_2^2\|\wt P\|_2^2.
\end{aligned}
$$
 Next, instead of \eqref{eq:e4e}, we  write that,  in light of Inequality \eqref{eq:poincarep} with $p=2$,
 we have 
 $$\begin{aligned}
 -\frac1\nu\int_{\T^3}Pv\cdot\nabla G&\leq\frac1\nu\|P\|_\infty(1+\|\wt\rho\|_2)\|\nabla v\|_2\|\nabla G\|_2\\
 &\leq\Frac{1}{8\rho^*}\|\nabla G\|_{2}^2+2\frac{\rho^*}{\nu^2}\|P\|_\infty^2(1+\|\wt\rho\|_2)^2
 \|\nabla v\|_2^2.
 \end{aligned}
 $$
 Therefore, the right-hand side of Inequality \eqref{eq:e8} becomes
 $$\displaylines{
 2\frac{(\rho^*)^3}{\nu^2}\|P\|_\infty^2 \|\nabla v\|_2^2+ 
 \frac2{\nu^2}\int_{\T^3} \wt G^2\,h\,dx\hfill\cr\hfill
 +C\|\sqrt \rho v\|_2^2\|\nabla v\|_2^6\biggl( \biggl(\frac{\rho^*}{\mu}\biggr)^6 \|\nabla \cP v\|_{2}^2
 +\frac {(\rho^*)^6}{\nu^8}\|\wt G\|_{2}^2\biggr) +C\frac{(\rho^*)^{3}}{\mu\nu^3}
\frac{\|\wt P\|_\infty^2}{P^*}\|\nabla v\|_2^2\|\wt P\|_2^2.}
 $$
Then, following the computations leading to \eqref{eq:e9} and assuming that $\nu$ satisfies
  \begin{equation}\label{eq:condnu3}
 \nu\geq 8(\rho^*)^3 \frac{P^*}\mu\andf   \nu^2\geq 8\|h\|_\infty\rho^*,
\end{equation}
 we get,
$$
 \frac d{dt}\cE+\cD\leq C\biggl(\|\sqrt\rho\,v\|_2^2\|\nabla v\|_2^6\biggl( \biggl(\frac{\rho^*}{\mu}\biggr)^6 \|\nabla \cP v\|_{2}^2
 +\frac {(\rho^*)^6}{\nu^8}\|\wt G\|_{2}^2\biggr) +\frac{(\rho^*)^{3}}{\mu\nu^3}
\frac{\|\wt P\|_\infty^2}{P^*}\|\nabla v\|_2^2\|\wt P\|_2^2\biggr)
 $$
 with 
$$\cD:= \frac14\|\sqrt\rho\,\dot v\|_{2}^2+\frac{\mu^2}{4\rho^*}\|\nabla^2\cP v\|_{2}^2+\frac1{8\rho^*}\|\nabla G\|_{2}^2+\frac12\int_{\T^2}(\div v)^2(\nu+h)\,dx+\frac{\mu P^*}{2\nu}\|\nabla v\|_{2}^2.$$
 Hence, remembering Inequality \eqref{eq:E3ineq},  one can conclude that  if 
  $\nu$ satisfies \eqref{eq:condnu3}, then we have 
the  differential inequality 
 \begin{equation}\label{eq:E3}
 \frac d{dt}\cE+\cD\leq C\|\nabla v\|_2^2\,\cE \biggl(\frac{(\rho^*)^{3/2}\|\wt P\|_\infty^2}{\nu^3\mu^{1/2}}
 +\frac{(\rho^*)^6}{\mu^9}E_0\,\cE^2\biggr)\cdotp
 \end{equation}
 Setting 
 $$X(t)=\cE(t)+\int_0^t\cD\,d\tau,\quad
 f(t):=  C\|\nabla v(t)\|_2^2,\quad A:= \frac{(\rho^*)^{3}P^*}{\mu\nu^2}\andf  B:= \frac{(\rho^*)^6}{\mu^9}E_0,$$
 Inequality \eqref{eq:E3} rewrites
 $$
 \frac d{dt} X\leq (AX + BX^3) f(t).
 $$
 This  may be integrated  into
 $$
 \frac{X(t)}{\sqrt{1 +c X^2(t)}}\leq  \frac{X(0)}{\sqrt{1 +c X^2(0)}}\, e^{A\int_0^t f(\tau)\,d\tau}\with
 c:=\frac BA\cdotp
 $$
 Bounding $f$ according to \eqref{eq:energy}, we see that  under the smallness condition 
\begin{equation}\label{eq:smallness3}
\cE^2_0<\frac AB\,\frac1{e^{\frac{2CAE_0}\mu}-1},
\end{equation}
we have 
 \begin{equation}\label{eq:global3D}
 %\frac{cX^2(0)}{1+cX^2(0)}e^{\frac{2CAE_0}\mu}<1\quad\hbox{then}\quad
X^2(t)\leq \frac{X^2(0)}{1+cX^2(0)}\biggl( \frac{e^{\frac{2CAE_0}\mu}}{1- \frac{cX^2(0)}{1+cX^2(0)}e^{\frac{2CAE_0}\mu}}\biggr)\quad\hbox{for all }\ 
t\geq0.
\end{equation}
%\begin{equation}\label{eq:smallness3}
%\cE^2(0)\,e^{\frac{2CAE_0}\mu}\leq \frac{3A}{4B}=\frac{3\mu^{\frac{17}2}\|\wt P\|_\infty^2}{4(\rho^*)^{\frac 92}E_0\nu^3},\end{equation}
\medbreak
Note that the largeness condition \eqref{eq:condnu3} on $\nu$ guarantees that 
the argument of the exponential function  above is very small.  Therefore,  the smallness condition \eqref{eq:smallness3}  
may be simplified into 
$$\cE_0\ll \frac{\mu^5}{(\rho^*)^3 E_0}\cdotp$$
%$$\cE(t)+\int_0^t\cD\,d\tau \leq 2\cE(0)\quad\hbox{for all }\ t\geq0.$$
 If  \eqref{eq:smalld3} holds true, then  that latter condition is fulfilled for  $\nu$ large compared
to $E_0^2.$ 
 \end{proof}
\begin{rmk}
Note that the smallness condition means that
 one can take the initial energy as large as we want provided that $\nu$ is large enough, 
but that $\div v_0$ must be $\cO(\nu^{-1/2}).$ At the same time, there is no smallness condition on 
$\rho_0-1$ whatsoever. 
\end{rmk}

%%%%%%%%%%%%%%%%%%%%%%%%

\subsection*{Step 2: Upper bound for the density}

In order to adapt Proposition \ref{p:boundrho2} to  the case  $d=3,$ the only changes are
in \eqref{eq:F1} and \eqref{eq:F2}.  As regards \eqref{eq:F1}, one may still start from \eqref{eq:d=2} then
combine with \eqref{eq:L4}  in order to get
\begin{equation}\label{eq:F5}
\|(-\Delta)^{-1}(\rho v)\|_\infty\leq C(\rho^*)^{\frac78}\|\sqrt\rho\,v\|_2^{\frac14}\|\nabla v\|_2^{\frac34}
\leq C(\rho^*)^{\frac78}E_0^{\frac18}\|\nabla v\|_2^{\frac34}.
\end{equation}
Next, instead of \eqref{eq:CLMS}, in order to bound the commutator term, we write that
\begin{equation}\label{eq:CLMS2}\|[v^j,(-\Delta)^{-1}\d_i\d_j]\rho v^i\|_\infty\lesssim
\|[\wt v^j,(-\Delta)^{-1}\d_i\d_j]\rho v^i\|_{W^{1,\frac{24}7}}
\lesssim\|\nabla v\|_{6}\|\rho v\|_8.
\end{equation}
Now, combining H\"older inequalities, Sobolev embedding and interpolation inequalities yields
$$
\begin{aligned}
\|\rho v\|_8&\leq (\rho^*)^{\frac{19}{20}}\|\sqrt \rho\, v\|_2^{\frac1{10}}\|v\|_{12}^{\frac9{10}}\\
&\leq C(\rho^*)^{\frac{19}{20}}\|\sqrt \rho\, v\|_2^{\frac1{10}}\|\nabla v\|_{\frac{12}5}^{\frac9{10}}\\
&\leq C(\rho^*)^{\frac{19}{20}}\|\sqrt \rho\, v\|_2^{\frac1{10}}\|\nabla v\|_{2}^{\frac{27}{40}}
\|\nabla v\|_6^{\frac9{40}}.
\end{aligned}
$$
Therefore, in $\T^3,$ Inequality \eqref{eq:CLMS} becomes
$$
\|[v^j,(-\Delta)^{-1}\d_i\d_j]\rho v^i\|_\infty\leq C(\rho^*)^{\frac{19}{20}}\|\sqrt \rho\, v\|_2^{\frac1{10}}\|\nabla v\|_{2}^{\frac{27}{40}}
\|\nabla v\|_6^{\frac{49}{40}}.$$
In order to bound the last term,  we use that
$$\begin{aligned} \|\nabla v\|_{6}&\lesssim \|\nabla\cP v\|_{6}+\nu^{-1}\bigl(\|\wt G\|_{6}+\|\wt P\|_{6}\bigr)\\
&\lesssim \|\nabla^2\cP v\|_2 + \nu^{-1}\bigl( \|\nabla G\|_{2}+\|\wt P\|_\infty\bigr)\cdotp\end{aligned}
 $$
Hence, using the energy conservation \eqref{eq:energy} and the definition of 
$\cE$ and $\cD,$
\begin{equation}\label{eq:F6}
\|[v^j,(-\Delta)^{-1}\d_i\d_j]\rho v^i\|_\infty\lesssim (\rho^*)^{\frac{19}{20}}E_0^{\frac1{20}}
\|\nabla v\|_{2}^{\frac{27}{40}}\biggl(\biggl(\frac{\sqrt{\rho^*}\cD}{\mu}\biggr)^{\frac{49}{80}}
+\biggl(\frac{\|\wt P\|_\infty}{\nu}\biggr)^{\frac{49}{40}}\biggr)\cdotp
\end{equation}
Plugging inequalities \eqref{eq:F5} and \eqref{eq:F6} in \eqref{eq:F+}, we get
\begin{multline}\label{eq:F7}
\|F^+(t)\|_\infty\leq\|F^+(0)\|_\infty  + \frac{\gamma-1}\gamma E_0
+ C\frac\gamma{\nu^2}(\rho^*)^{\frac78}E_0^{\frac18}
\int_0^te^{-\frac\gamma\nu(t-\tau)}\|\nabla v(\tau)\|_2^{\frac34}\,d\tau
\\+C\frac{ (\rho^*)^{\frac{19}{20}}}\nu E_0^{\frac1{20}}
\int_0^te^{-\frac\gamma\nu(t-\tau)}
\|\nabla v\|_{2}^{\frac{27}{40}}\biggl(\biggl(\frac{\sqrt{\rho^*}\cD(\tau)}{\mu}\biggr)^{\frac{49}{80}}
+\biggl(\frac{\|\wt P\|_\infty}{\nu}\biggr)^{\frac{49}{40}}\biggr)d\tau.
\end{multline}
 Since the  integrals in the right-hand side may be bounded in terms of the data
  according to the basic energy inequality \eqref{eq:energy} and to \eqref{eq:DE}, 
 we eventually get if $\nu$ is large enough:
 $$ \|F^+(t)\|_\infty\leq\|F^+(0)\|_\infty + C_0\nu^{-\frac{27}{80}} + \frac{\gamma-1}\gamma E_0$$
with $C_0$ depending only on $E_0,$ $\cE_0,$ $\|\rho_0\|_\infty,$
$\mu$ and $\gamma.$
From this point, one can conclude as in the two-dimensional case that 
\eqref{eq:rhomax} is fulfilled if $\nu$ is large enough.

%%%%%%%%%%%%%%

\subsection*{Step 3: Time weighted estimates}
As in the 2D case, the starting point is Identity \eqref{mom-d}. 
However, Inequality \eqref{w22} that has been used all the time has to be replaced with 
an estimate  for $t^{1/8}\nabla v$ in  $L_4(0,T\times\T^3)$:
 we write that the previous steps and \eqref{w23} imply that
\begin{eqnarray}\label{w22a3}
\|t^{1/8}\nabla\cP v\|_{L_4(0,T\times\T^3)}&\!\!\!\leq\!\!\!& \|t^{1/4}\nabla\cP v\|_{L_\infty(0,T;L_3)}^{1/2}
\|\nabla\cP v\|_{L_2(0,T;L_6)}^{1/2}\nonumber\\
&\!\!\!\lesssim\!\!\!& \|\nabla\cP v\|_{L_\infty(0,T;L_2)}^{1/4} \|\sqrt t\,\nabla^2\cP v\|_{L_\infty(0,T;L_2)}^{1/4}
\|\nabla^2\cP v\|_{L_2(0,T\times\T^3)}^{1/2}\nonumber\\
&\!\!\!\leq\!\!\!& C_0\|\sqrt{\rho t}\,\dot v\|_{L_\infty(0,T;L_2)}^{1/4},
\end{eqnarray}
where the meaning of $C_0$ is the same as in the two-dimensional case.
\medbreak
Similarly, we have
\begin{eqnarray}\label{w22b3}
\|t^{1/8} \nabla^2(-\Delta)^{-1}\wt G\|_{L_4(0,T\times \T^3)}&\!\!\!\lesssim\!\!\!&  \|\wt G\|_{L_4(0,T\times \T^3)}\nonumber\\
&\!\!\!\lesssim\!\!\!& \|\wt G\|_{L_\infty(0,T;L_2)}^{1/4} \|\sqrt t\nabla G\|_{L_\infty(0,T;L_2)}^{1/4}
\|\nabla G\|_{L_2(0,T\times\T^3)}^{1/2}\nonumber\\
&\!\!\!\leq\!\!\!& C_0\nu^{1/8} \|\sqrt{\rho t}\,\dot v\|_{L_\infty(0,T;L_2)}^{1/4}.\end{eqnarray}
Since \eqref{w22b} is valid in any dimension, one can conclude that
\begin{equation}\label{w22-3}
 \|t^{1/8}\nabla v\|_{L_4(0,T\times \T^3)} \leq C_0\bigl(\|\sqrt{\rho t}\,\dot v\|_{L_\infty(0,T;L_2)}^{1/4} +\nu^{-3/4}T^{1/8}\bigr)\cdotp
\end{equation}

\subsubsection*{Substep 1} Compared to $d=2,$ the only change lies in the estimate for 
$\int_{\T^3} \rho\,\div v\,|\dot v|^2t\,dx.$
Now, still using that $\div v=\nu^{-1}(\wt P+\wt G),$ we write that
  $$\begin{aligned}
 \int_{\T^3}\rho\,\div v|\dot v|^2t\,dx&\leq \nu^{-1}\int_{\T^2} (\wt P+\wt G)\rho|\dot v|^2t\,dx\\
 &\leq \nu^{-1}\bigl(\|\wt P\|_\infty\|\sqrt{\rho t}\,\dot v\|_2^2 + \sqrt{\rho^*}\|\wt G\|_3\|\sqrt t\,\dot v\|_6\|\sqrt{\rho t}\,\dot v\|_2\bigr)\\
 &\leq C_0\nu^{-1}\bigl(\|\sqrt{\rho t}\,\dot v\|_2^2+\|\wt G\|_2^{1/2}\|\nabla G\|_2^{1/2}\|\sqrt t\,\nabla\dot v\|_2 
 \|\sqrt{\rho t}\,\dot v\|_2\bigr)\cdotp\end{aligned}
 $$
  The first term may be treated as in the 2D case. As for the second one, we use the fact that \eqref{w20} ensures that
  $$
  \|\nabla G\|_2\leq\sqrt{\rho^*}\|\sqrt \rho\,\dot v\|_2.
  $$
  Hence, using Proposition \ref{p:H1a}  to bound $\|\wt G\|_2,$  we get  
  $$\begin{aligned}\int_0^T\!\!\!\! \int_{\T^2}\rho\,\div v|\dot v|^2t\,dx\,dt&\leq C_0\biggl(\nu^{-1}\!\!\int_0^T\|\sqrt{\rho t}\,\dot v\|_2^2
+\nu^{-3/4}\!\!\int_0^T\|\sqrt\rho\,\dot v\|_{2}^{1/2}\|\sqrt{\rho t}\,\dot v\|_2\|\sqrt t\,\nabla\dot v\|_2\,dt\biggr)\\
&\leq \frac{C_0}{\nu}\biggl(\int_0^T\!\!\|\sqrt{\rho t}\,\dot v\|_2^2
+\!\frac1{\sqrt\nu}\!\int_0^T\!\!\|\sqrt\rho\,\dot v\|_{2}\|\sqrt{\rho t}\,\dot v\|_2^2\,dt\biggr)
\!+\!\frac\mu2\int_0^T\!\!\|\sqrt t\,\nabla\dot v\|_2^2\,dt.\end{aligned}
$$
In the end, we thus obtain 
\begin{multline}\label{eq:step1-3}
\int_{\T^3} \frac{D}{Dt}(\rho \dot v) \cdot (t\dot v)\, dx \geq 
\frac 12 \frac{d}{dt} \int_{\T^3} \rho |\dot v|^2 t dx  - \frac12 \int_{\T^3} \rho|\dot v|^2 \,dx
\\-\frac{C_0}{\nu}\biggl(\int_0^T\!\!\|\sqrt{\rho t}\,\dot v\|_2^2
+\!\frac1{\sqrt\nu}\!\int_0^T\!\!\|\sqrt\rho\,\dot v\|_{2}\|\sqrt{\rho t}\,\dot v\|_2^2\,dt\biggr)
-\frac\mu2\int_0^T\!\!\|\sqrt t\,\nabla\dot v\|_2^2\,dt.
\end{multline}

\subsubsection*{Substep 2}

Instead of \eqref{WI1}, we use that, by virtue of  \eqref{w22-3}, 
\begin{equation}\label{WI1b}
\biggl| \int_0^T I_1(t)\,dt\biggr|
 \leq C_0 T^{1/4} \bigl(\|\sqrt{\rho t}\,\dot v\|_{L_\infty(0,T;L_2)}^{1/2}+\nu^{-3/2}T^{1/4}\bigr)\|\sqrt t\,\nabla \dot v \|_{L_2(0,T\times\T^3)}.
\end{equation}
For bounding $I_2,$ we decompose it into three parts, as in \eqref{I2}.  For $I_{2,1},$ we write that
$$\biggl|\int_0^T I_{21}(t)\,dt\biggr|\leq \|\sqrt t\,\nabla v\|_{L_\infty(0,T;L_3)}\|\nabla^2\cP v\|_{L_2(0,T\times\T^3)}
\|\sqrt t\,\dot v\|_{L_2(0,T;L_6)}.$$
Let us notice that
\begin{equation}\label{w30}
\|t^{1/4}\nabla  v\|_{L_\infty(0,T;L_3)}\leq C_0\bigl(\|\sqrt{\rho t}\,\dot v\|_{L_\infty(0,T;L_2)}^{1/2}
+\nu^{-2/3}T^{1/4}\bigr)\cdotp
\end{equation}
Indeed,  as already used for proving \eqref{w22a3} and \eqref{w22b3}, we have
\begin{eqnarray}\label{w30a}
\|t^{1/4}\nabla \cP v\|_{L_\infty(0,T;L_3)}
&\!\!\!\leq\!\!\!& C_0\|\sqrt{\rho t}\,\dot v\|_{L_\infty(0,T;L_2)}^{1/2},\\\label{w30b}
\|t^{1/4}\wt G\|_{L_\infty(0,T;L_3)}
&\!\!\!\leq\!\!\!& C_0\nu^{1/4}\|\sqrt{\rho t}\,\dot v\|_{L_\infty(0,T;L_2)}^{1/2},
\end{eqnarray}
and we have the obvious inequality
\begin{equation}\label{w30c}
\|\wt P\|_{L_\infty(0,T;L_3)}\leq\|\wt P\|_{L_\infty(0,T;L_2)}^{2/3}\|\wt P\|_{L_\infty(0,T\times\T^3)}^{1/3}\leq\nu^{1/3} C_0.
\end{equation}
Hence, combining  Sobolev embedding and \eqref{w30}, we obtain 
\begin{equation}\label{w31b}
\biggl|\int_0^TI_{21}\,dt\biggr|\leq C_0T^{1/4}\bigl(\|\sqrt{\rho t}\,\dot v\|_{L_\infty(0,T;L_2)}^{1/2}
+\nu^{-2/3}T^{1/4}\bigr)\|\sqrt t\,\nabla\dot v\|_{L_2(0,T\times\T^3)}.
\end{equation}
In order to bound $I_{22},$ we now write that 
$$\biggl|\int_0^TI_{22}\,dt\biggr|\lesssim \nu^{-1}\|\sqrt t\,\nabla v\|_{L_\infty(0,T;L_3)}
\|\nabla G\|_{L_2(0,T\times\T^3)}\|\sqrt t\, \dot v\|_{L_2(0,T;L_6)}.$$
Therefore, using the previous section and \eqref{w30}, we get
\begin{equation}\label{w32}
\biggl|\int_0^TI_{22}\,dt\biggr|\leq C_0\nu^{-1/2}T^{1/4} \bigl(\|\sqrt{\rho t}\,\dot v\|_{L_\infty(0,T;L_2)}^{1/2}
+\nu^{-2/3}T^{1/4}\bigr)\|\sqrt t\,\nabla\dot v\|_{L_2(0,T\times\T^3)}.
\end{equation}
To bound $I_{23},$ we use \eqref{w26} as in the two-dimensional case.
The first two terms of the decomposition may be bounded as before. For the third one, we use the fact that
$$\begin{aligned}
\biggl|\int_0^T\int_{\T^3}\d_k\wt G\,\d^2_{ik}\psi\,\dot v^i\,t\,dx\biggr|
&\lesssim \sqrt T\,\|\nabla G\|_{L_2(0,T\times\T^3)}\|\wt P\|_{L_\infty(0,T;L_3)}\|\sqrt t\,\dot v\|_{L_2(0,T;L_6)}\\
&\leq C_0\sqrt T\, \|\sqrt t\,\nabla \dot v\|_{L_2(0,T\times\T^3)}\end{aligned}
$$
and that
 $$\begin{aligned}\biggl|\int_{\T^3}\d_kv^j\,\d^2_{ik}\psi\,\d_j\dot v^i\,t\,dx\biggr|&\leq T^{3/8}\|t^{1/8}\nabla v\|_{L_4(0,T\times\T^3)}
 \|\wt P\|_{L_4(0,T\times\T^3)} \|\sqrt t\,\nabla \dot v\|_{L_2(0,T\times\T^3)}\\
 &\leq C_0\nu^{1/4}T^{3/8}\bigl(\|\sqrt{\rho t}\,\dot v\|_{L_\infty(0,T;L_2)}^{1/4}+\nu^{-3/4}T^{1/8}\bigr)
  \|\sqrt t\,\nabla \dot v\|_{L_2(0,T\times\T^3)}.\end{aligned}
$$
Hence, one can conclude that 
\begin{equation}\label{w33b}
\biggl|\int_0^TI_{23}\,dt\biggr|\leq C_0\biggl(\frac{\sqrt T}{\nu^{3/2}}+\frac{T^{3/8}}{\nu^{3/4}}\|\sqrt{\rho t}\,\dot v\|_{L_\infty(0,T;L_2)}^{1/4}\biggr)
  \|\sqrt t\,\nabla \dot v\|_{L_2(0,T\times\T^3)}.
\end{equation}
Putting together all the estimates of the second substep, we get 
\begin{multline}\label{eq:step2-3}
-\mu\int_0^T\!\!\! \int_{\T^3} \biggl(\frac{D}{Dt} \Delta v\biggr) \cdot \dot v \, t \, dx \,dt \geq 
 \mu\int_0^T \!\!\!\int_{\T^3} |\nabla \dot v|^2t \, dx \,dt\\
 -C_0T^{1/4}\bigl(\|\sqrt{\rho t}\,\dot v\|_{L_\infty(0,T;L_2)}^{1/2}
+\nu^{-2/3}T^{1/4}\bigr)\|\sqrt t\,\nabla\dot v\|_{L_2(0,T\times\T^3)}.
 \end{multline}

\subsubsection*{Substep 3}

To bound $K_1$ (defined in \eqref{w31}),  we write that
$$
\nu\left|\int_0^T K_1  \,dt\right| \leq CT^{1/4} \|t^{1/8}\nabla v\|_{L_4(0,T;L_4)}\|t^{1/8}(\wt P 
+ \wt G)\|_{L_4(0,T;L_4)} \|\sqrt t\,\div \dot v\|_{L_2(0,T;L_2)},$$
whence
\begin{multline}\label{eq:K1-3}
 \nu\left|\int_0^T K_1  \,dt\right|\leq C_0T^{1/4}\bigl(\|\sqrt{\rho t}\,\dot v\|_{L_\infty(0,T;L_2)}^{1/4}+\nu^{-3/4}T^{1/8} \bigr) \\\times\bigl(\nu^{1/8}\|\sqrt{\rho t}\,\dot v\|_{L_\infty(0,T;L_2)}^{1/4}
+\nu^{1/4}T^{1/8}\bigr) \|\sqrt t\,\div \dot v \|_{L_2(0,T;L_2)}.
\end{multline}
  We decompose $K_2$ as in the case $d=2.$ 
To bound $K_{2,1},$ we write that
\begin{eqnarray}\label{eq:K21-3}
\nu\biggl|\int_0^T K_{2,1}\,dt\biggr|&\!\!\!\leq\!\!\!&\sqrt T\|\nabla \cP v\|_{L_2(0,T\times\T^3)}
 \|\wt P\|_{L_\infty(0,T\times\T^3)} \|\sqrt t\,\nabla \dot v\|_{L_2(0,T\times\T^3)}\nonumber\\
&&\qquad\qquad\qquad +\|\sqrt t\nabla\cP v\|_{L_\infty(0,T;L_3)}\|\nabla G\|_{L_2(0,T\times\T^3)}
 \|\sqrt t\,\dot v\|_{L_2(0,T;L_6)}\nonumber\\
 &\!\!\!\leq\!\!\!& C_0\bigl(\sqrt T +T^{1/4}\|\sqrt{\rho t}\,\dot v\|_{L_\infty(0,T;L_2)}^{1/2}\bigr) 
  \|\sqrt t\,\nabla \dot v\|_{L_2(0,T\times\T^3)}.
\end{eqnarray}
For $K_{2,2},$ we have 
\begin{eqnarray}\label{eq:K22-3}
\nu\biggl|\int_0^T K_{2,2}\,dt\biggr|&\!\!\!\leq\!\!\!& \|\sqrt t\,\div v\|_{L_\infty(0,T;L_3)}\|\sqrt t\,\dot v\|_{L_2(0,T;L_6)}
\|\nabla G\|_{L_2(0,T\times\T^3)}\nonumber\\
&&\!\!\!\!\!\!\!\!\!\!\!\!\!\!+\nu^{-1}T^{1/4}\|t^{1/8}\wt G\|_{L_4(0,T;L_4)}\|\sqrt t\nabla\dot v\|_{L_2(0,T;L_2)}\bigl(
\|t^{1/8}\wt G\|_{L_4(0,T;L_4)}+\|t^{1/8}\wt P\|_{L_4(0,T;L_4)}\bigr)\nonumber\\
&\!\!\!\leq\!\!\!&C_0T^{1/4}\bigl(T^{1/8}\nu^{-3/4}+\nu^{-3/4}\|\sqrt{\rho t}\, \dot v\|_{L_\infty(0,T;L_2)}^{1/2}\bigr)
\|\sqrt t\,\nabla\dot v\|_{L_2(0,T;L_2)}.
\end{eqnarray}
Finally, we have
\begin{eqnarray}\label{eq:K23-3}
\nu^2\biggl|\int_0^T K_{2,3}\,dt\biggr|&\!\!\!\lesssim\!\!\!& \sqrt T\,\|\wt P\|_{L_4(0,T;L_4)}^2\|\sqrt t\nabla\dot v\|_{L_2(0,T;L_2)}\nonumber\\
&&\qquad\qquad\qquad+\sqrt T\,\|\wt P\|_{L_\infty(0,T;L_3)}\|\nabla G\|_{L_2(0,T\times\T^3)}\|\sqrt t\,\dot v\|_{L_2(0,T;L_6)}\nonumber\\
&\!\!\!\leq\!\!\!& C_0\sqrt{\nu T}\|\sqrt t\nabla\dot v\|_{L_2(0,T;L_2)}.
\end{eqnarray}
Plugging  \eqref{eq:H1}, \eqref{eq:vL4}, \eqref{w22}, \eqref{w22a} and \eqref{w22b}
in \eqref{eq:K21-3},   \eqref{eq:K22-3} and  \eqref{eq:K23-3} yields 
$$\nu \biggl|\int_0^T K_2 \,dt\biggr| \leq C_0\bigl(\sqrt T
+T^{1/4}\|\sqrt{\rho t}\,\dot v\|_{L_\infty(0,T;L_2)}^{1/2}\bigr) 
  \|\sqrt t\,\nabla \dot v\|_{L_2(0,T\times\T^3)}.$$
The conclusion of this step   is that, if $\nu$ is large enough then
\begin{multline}\label{eq:step3-3}
-(\nu\!-\!\mu)\int_0^T \!\!\!\int_{\T^3} \frac{D}{Dt} \nabla \div v\cdot\dot v\,t\, dx\, dt \geq  (\nu\!-\!\mu)\! \int_0^T\!\!\! \int_{\T^3} (\div\dot v)^2 t \,dx\,dt \\
- C_0\bigl(\nu^{1/4}\sqrt T+T^{1/4}\|\sqrt{\rho t}\, \dot v\|_{L_\infty(0,T;L_2)}^{1/2}\bigr)\|\sqrt t\,\nabla\dot v\|_{L_2(0,T;L_2)}\\
-C_0\bigl(\nu^{1/8}T^{1/4}\|\sqrt{\rho t}\,\dot v\|_{L_\infty(0,T;L_2)}^{1/2}+\nu^{3/8}\sqrt T\bigr)\|\sqrt t\, \div \dot v \|_{L_2(0,T;L_2)}.
\end{multline}

\subsubsection*{Substep 4}   

Term $L_1$ may still be bounded according to Inequality \eqref{w36}.
As for $L_2,$ we have
$$\begin{aligned}
\biggl|\int_0^TL_2(t)\,dt\biggr|&\leq \frac{1}{2\nu}\|\wt P\|_{L_4(0,T;L_4)}^2\|t\,\div\dot v\|_{L_2(0,T;L_2)}\\
&\qquad\qquad+\frac{1}\nu \|\wt P\|_{L_\infty(0,T;L_3)}\|\nabla G\|_{L_2(0,T;L_2)}\| t\,\dot v\|_{L_2(0,T;L_6)}\\
&\qquad\qquad\qquad\qquad+\|\wt P\|_{L_\infty(0,T;L_\infty)}\|\nabla v\|_{L_2(0,T;L_2)}\|t\,\nabla\dot v\|_{L_2(0,T;L_2)}\\
&\leq C_0\sqrt T\,\|\sqrt t\,\nabla\dot v\|_{L_2(0,T;L_2)}.
\end{aligned}
$$
So this step  gives
\begin{multline}\label{eq:step4-3}
\int_{\T^3} \frac{D}{Dt} \nabla P\cdot \dot v\,t\,dx \geq - \frac\nu4 \int_0^T \!\!\!\int_{\T^3} (\div\dot v)^2 \, t \,dx\,dt 
\\- {\|h\|_\infty T}\nu^{-1}\|\div v\|_{L_2(0,T;L_2)}^2- C_0\sqrt T\|\sqrt t\,\nabla\dot v\|_{L_2(0,T;L_2)}.
\end{multline}

\subsubsection*{Susbstep 5}  Combining Inequalities \eqref{eq:step1}, \eqref{eq:step2-3}, \eqref{eq:step3-3} and \eqref{eq:step4-3} 
yields  for large  $\nu,$
$$\displaylines{
 \|\sqrt{\rho t} \, \dot v\|_{L_\infty(0,T;L_2)}^2 \!+\! 2\mu
 \int_0^T \|\sqrt t\, \nabla\cP \dot v\|_2^2 \,dt \!+\!\frac{3\nu}2\int_0^T \|\sqrt t\,\div \dot v\|_2^2\,dt
 \leq  
2\! \int_0^T\! \|\div v\|_\infty   \|\sqrt{\rho t} \, \dot v\|_{2}^2\,dt  \hfill\cr\hfill
 +\|\sqrt\rho\,\dot v\|_{L_2(0,T;L_2)}^2 + {\|h\|_\infty T}\nu^{-1}\|\div v\|_{L_2(0,T\times\T^2)}^2
\! +C_0T^{1/4}\bigl(\nu^{1/8}\|\sqrt{\rho t}\,\dot v\|_{L_\infty(0,T;L_2)}^{1/2}\hfill\cr\hfill+\nu^{3/8}T^{1/4}\bigr)\|\sqrt t\, \div \dot v \|_{L_2(0,T;L_2)}
+C_0T^{1/4}\bigl(\|\sqrt{\rho t}\,\dot v\|_{L_\infty(0,T;L_2)}^{1/2}+T^{1/4}\bigr)\|\sqrt t\,\nabla\dot v\|_{L_2(0,T;L_2)}.}$$
Playing with Young inequality and Gronwall Lemma yields Prop. 
\ref{p:weight} for~$d=3.$

%\begin{prop}\label{p:weight3}
%Let $(\rho,v)$ be a smooth solution with bounded density of \eqref{CNS} on $[0,T]\times\T^3.$
%If $\nu$ is large enough and Condition  \eqref{eq:smallness3} is fulfilled then 
%the estimate of Proposition \ref{p:weight} holds true.
%\end{prop}
\medbreak 
It is now easy to adapt  Corollary \ref{c:c} to  the  3D case: we start from 
$$\|\wt G\|_\infty\lesssim \|\wt G\|_2^{1/4}\|\nabla G\|_6^{3/4}.$$
Hence,  remembering \eqref{w20} and using the embedding $\dot H^1(\T^3)\hookrightarrow L_6(\T^3),$
$$
\|\wt G\|_\infty\lesssim (\rho^*)^{3/4} \|\wt G\|_2^{1/4} t^{-3/8}\|\sqrt t\,\nabla\dot v\|_2^{3/4}.
$$
Therefore, as in the 2D case,
$$\|\wt G\|_{L_{1+\varepsilon}(0,T;L_\infty)}^{1+\varepsilon}\lesssim(\rho^*)^{\frac{3(1+\varepsilon)}{4}}\nu^{\frac{1+\varepsilon}8}
  (\nu^{-\frac12}\|\wt G\|_{L_\infty(0,T;L_2)})^{\frac{1+\varepsilon}4}
    \|\sqrt t\,\nabla\dot v\|_{L_2(0,T;L_2)}^{\frac{3(1+\varepsilon)}4}\biggl(\int_0^T t^{-\frac{3+3\varepsilon}{5-3\varepsilon}}\,dt\biggr)^{\frac{5-3\varepsilon}8},$$
and one can thus conclude that  $\div v$ is in $L_{1+\varepsilon}(0,T;L_\infty)$ provided that $\varepsilon <1/3.$ 
Bounding $\cP v$  is left to the reader.
\end{appendix}

\bigskip

\noindent{\bf Acknowledgments.} This work was partially supported by   ANR-15-CE40-0011.
The second author (P.B.M.) has been partly supported by the Polish National Science Centre's grant No 2018/29/B/ST1/00339.

\bigskip

%%%%%%%%%%%%%%%%%%%%%%%%%%%%%%%%%

\end{document}